\begin{document}
\setcounter{tocdepth}{1}

% define theorem environments
\newtheorem{theorem}{Theorem}    %[section]
\newtheorem{proposition}[theorem]{Proposition}
\newtheorem{corollary}[theorem]{Corollary}
\newtheorem{lemma}[theorem]{Lemma}
\newtheorem{sublemma}[theorem]{Sublemma}
\newtheorem{conjecture}[theorem]{Conjecture}
\newtheorem{claim}[theorem]{Claim}
\newtheorem{fact}[theorem]{Fact}
\newtheorem{observation}[theorem]{Observation}

\newtheorem{definition}{Definition}
\newtheorem{notation}[definition]{Notation}
\newtheorem{remark}[definition]{Remark}
\newtheorem{question}[definition]{Question}
\newtheorem{questions}[definition]{Questions}
\newtheorem{hypothesis}[definition]{Hypothesis}

\newtheorem{example}[definition]{Example}
\newtheorem{problem}[definition]{Problem}
\newtheorem{exercise}[definition]{Exercise}

 \numberwithin{theorem}{section}
 \numberwithin{definition}{section}
 \numberwithin{equation}{section}

\def\repair{\medskip\hrule\hrule\medskip}

\def\bff{\mathbf f}
\def\bE{\mathbf E}
\def\bF{\mathbf F}
\def\bK{\mathbf K}
\def\bP{\mathbf P}
\def\bx{\mathbf x}
\def\bi{\mathbf i}
\def\bk{\mathbf k}
\def\bt{\mathbf t}
\def\bc{\mathbf c}
\def\ba{\mathbf a}
\def\bw{\mathbf w}
\def\bh{\mathbf h}
\def\bn{\mathbf n}
\def\bg{\mathbf g}
\def\bc{\mathbf c}
\def\bs{\mathbf s}
\def\bp{\mathbf p}
\def\by{\mathbf y}
\def\bv{\mathbf v}
\def\be{\mathbf e}
\def\bu{\mathbf u}
\def\bm{\mathbf m}
\def\bxi{{\mathbf \xi}}
\def\bR{\mathbf R}
\def\by{\mathbf y}
\def\bz{\mathbf z}
\def\bfb{\mathbf b}
\def\bPhi{{\mathbf\Phi}}

\newcommand{\norm}[1]{ \|  #1 \|}

\def\scriptl{{\mathcal L}}
\def\scriptc{{\mathcal C}}
\def\scriptd{{\mathcal D}}
\def\scrapd{{\mathcal D}}
\def\scripts{{\mathcal S}}
\def\scriptq{{\mathcal Q}}
\def\scriptt{{\mathcal T}}
\def\scriptf{{\mathcal F}}
\def\scriptm{{\mathcal M}}
\def\calM{{\mathcal M}}
\def\scripti{{\mathcal I}}
\def\scriptr{{\mathcal R}}
\def\scriptb{{\mathcal B}}
\def\scripte{{\mathcal E}}
\def\scripta{{\mathcal A}}
\def\scriptn{{\mathcal N}}
\def\scriptv{{\mathcal V}}
\def\scriptz{{\mathcal Z}}
\def\scriptj{{\mathcal J}}
\def\scriptk{{\mathcal K}}
\def\scriptg{{\mathcal G}}
\def\scripth{{\mathcal H}}

\def\HLM{{\mathbb M}}
\def\mbbm{{\mathfrak M}}

\def\bk{\mathbf k}
\def\kernel{\operatorname{kernel}}
\def\dist{\operatorname{distance}\,}
\def\eps{\varepsilon}

\def\reals{\mathbb R}
\def\naturals{\mathbb N}
\def\integers{\mathbb Z}
\def\rationals{\mathbb Q}
\def\one{\mathbf 1}
\def\complex{{\mathbb C}\/}

\def\lt{{L^2}}

\def\three{\mathbf 3}
\def\four{\mathbf 4}

\def\bart{\bar t}
\def\barz{\bar z}
\def\barx{{\bar x}}
\def\bary{\bar y}
\def\barz{{\bar z}}
\def\bars{\bar s}
\def\barc{\bar c}
\def\baru{\bar u}
\def\barr{\bar r}

\def\distance{\operatorname{distance}}
\def\md{{\mathcal D}}

\def\lsharp{\Lambda^\sharp}
\def\lnatural{\Lambda^\natural}
\def\sS{{\mathcal S}}
\def\barsS{\overline{\mathbb S}}

\title {A class of singular bilinear maximal functions}

\author{Michael Christ}
\author{Zirui Zhou}
% \author{Draft of work in progress  --- MC and ZZ }

\address{
        Michael Christ\\
        Department of Mathematics\\
        University of California \\
        Berkeley, CA 94720-3840, USA}
\email{mchrist@berkeley.edu}

\address{
        Zirui Zhou\\
        Department of Mathematics\\
        University of California \\
        Berkeley, CA 94720-3840, USA}
\email{zirui\_zhou@berkeley.edu}

\begin{abstract}
Lebesgue space bounds $L^{p_1}(\reals^1) \times L^{p_2}(\reals^1) \to L^q(\reals^1)$
are established for certain maximal bilinear operators.
The proof combines a trilinear smoothing inequality with Calder\'on-Zygmund theory.
\end{abstract}

\thanks{Research of the first author was supported by NSF grant
DMS-1901413}
%\includegraphics[width=0.5in]{NSF_4-Color_bitmap_Logo.png}
% \end{comment}

\date{March 30, 2022}
	% February 9, 2022}
% September 20, 2021.}

 \maketitle

 \section{Results}

 Let $I_0\subset\reals$ be an open interval. Let $\gamma:I_0\to\reals^2$ be real analytic
 with $\frac{d\gamma}{dt}$ vanishing nowhere.
 Write $\gamma(t) = (\gamma_1(t),\gamma_2(t))$.
 Let $\eta\in C^\infty_0(I_0)$  be a nonnegative, smooth, compactly supported auxiliary function.
 Consider bilinear operators of the form
 \begin{equation} 
 B_r(f_1,f_2)(x) = \int_{\reals^1} \prod_{j=1}^2 f_j(x+r\gamma_j(t))\,\eta(t)\,dt
 \end{equation}
 for $r\in(0,\infty)$.
Each $B_r$ maps a pair of functions of a single real variable to a function of one real variable.
$B_r(f_1,f_2)$ is the restriction to the diagonal in $\reals^1\times\reals^1$
 of an integral whose natural domain of definition is the full product space.
These forms are initially defined for continuous functions $f_j:\reals^1\to\complex$,
but their domains naturally contain appropriate Lebesgue classes.

Define associated maximal functions by
 \begin{align}
	 M^{\text{full}}(f_1,f_2)(x)
	 &= \sup_{r\in(0,\infty)} |B_r(f_1,f_2)(x)|
%	 \int \big|f_1(x+r\gamma_1(t)) \, f_2(x+r\gamma_2(t))\big|\,\eta(t)\,dt
	 \\
	 \calM(f_1,f_2)(x)
	 &= \sup_{r\in 2^\integers} |B_r(f_1,f_2)(x)|
%	 &= \sup_{r\in 2^{\integers}} \int \big|f_1(x+r \gamma_1(t)) \, f_2(x+r\gamma_2(t))\big|\,\eta(t)\,dt.
\end{align}
Both $M^{\text{full}}(f_1,f_2)(x)$
and $\calM(f_1,f_2)(x)$ are functions of $x\in\reals^1$.

A natural example is $\gamma(t) = (\cos(t),\sin(t))$, which leads to the maximal function
\begin{equation}
	\sup_{r\in 2^{\integers}} \big| \int_{S^1} f_1(x+ r y_1)\, f_2(x+ r y_2)\,d\sigma(y) \big|
\end{equation}
with $\sigma$ denoting arc length measure on the unit circle $S^1\subset\reals^2$.

Our main theorem requires three hypotheses.
Denote by $\gamma'_j$ the derivative of $\gamma_j$.
Define
\begin{equation} J(t) = \gamma'_1(t)-\gamma_2'(t). \end{equation}

\noindent {\bf Hypothesis 1.}\ 
$\gamma'_1,\gamma'_2$ are linearly independent over $\reals$. 
\newline {\bf Hypothesis 2.}\ 
There do not exist $a,b_1,b_2\in\complex$ 
with $a\ne 0$ and at least one $b_j\ne 0$ such that $\sum_{j=1}^2 b_je^{a\gamma_j(t)}$
is constant in $I_0$.
\newline {\bf Hypothesis 3.}\ 
\begin{equation} \label{hypothesis1}
|J(t)| + |J'(t)| \ne 0 \ \  \forall\,t\in I_0.
\end{equation}

The first hypothesis is equivalent to the assumption that the range of $\gamma$ is not contained
in any affine subspace of $\reals^2$. A consequence is that
all of the three functions $\gamma_1,\gamma_2,\gamma_1-\gamma_2$ are nonconstant.

% Both hypotheses are satisfied by $\gamma(t) = (\cos(t),\sin(t))$, $t\in\reals^1$.

To any pair of exponents $p_1,p_2\in[1,\infty]$ is associated the exponent
$q= q(p_1,p_2)\in[\tfrac12,\infty]$ defined by
\begin{equation} q^{-1} = p_1^{-1} + p_2^{-1}.\end{equation}

Boundedness of $M^{\text{full}}$ when $p_1,p_2>2$  is virtually immediate.
Indeed, denoting the one-dimensional Hardy-Littlewood maximal function by $\HLM$
and setting $\HLM_\tau f = (\HLM(|f|^\tau))^{1/\tau}$,
there is a uniform pointwise upper bound
$M^{\text{full}}(f_1,f_2)\le C_\tau \HLM_2(f_1)\cdot \HLM_\tau(f_2)$ for every $\tau >2$;
see Lemma~\ref{lemma:cauchy-schwarz}. Thus it is the extension to smaller exponents,
and in particular, to $q\le 1$, that is in question for the three maximal
operators $\calM,\,M^{\text{full}}_0,\,M^{\text{full}}$. 

%Boundedness from $L^{p_1}\times L^{p_2}$
%to $L^{q(p_1,p_2)}$ follows from H\"older's inequality for 

Our main result concerns $\calM$. 

\begin{theorem}\label{thm:lacunary}
Let $I_0\subset\reals$ be a nonempty open interval.
Let $\varphi:I_0\to\reals^2$ be a real analytic mapping that satisfies the three hypotheses.
Let $\eta:I_0\to[0,\infty)$ be infinitely differentiable and have compact support in $I_0$.
For any $p_1,p_2>1$ there exists $C<\infty$ such that
\begin{equation}
\norm{\calM(f_1,f_2)}_{L^{q}(\reals^1)}
\le C \norm{f_1}_{L^{p_1}(\reals^1)}
\norm{f_2}_{L^{p_2}(\reals^1)}
\end{equation}
where $q = q(p_1,p_2)$.
\end{theorem}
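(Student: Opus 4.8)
The plan is to dyadically decompose in the parameter $r$ and to reduce matters to a single-scale estimate together with a summable gain, as is standard for lacunary maximal functions; the two new ingredients will be a trilinear smoothing inequality, which supplies decay in the frequency variable dual to the scale, and Calder\'on--Zygmund theory, which converts an $L^2$-type estimate into estimates for the full range $p_1,p_2>1$. First I would fix $\psi\in C^\infty_0$ supported in an annulus $\{|\xi|\sim 1\}$ together with a partition of unity $\sum_{k\ge 0}\psi_k(\xi)=1$ on $|\xi|\ge 1$ (plus a low-frequency piece $\psi_{<0}$), where $\psi_k(\xi)=\psi(2^{-k}\xi)$, and insert this decomposition on, say, the $f_2$ factor after writing $B_r$ as an integral. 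Thus $B_r(f_1,f_2)=\sum_{k\ge 0}B_{r,k}(f_1,f_2)+(\text{low frequency})$, where $B_{r,k}$ has $f_2$ replaced by $P_k f_2$. For the low-frequency term, $P_{<0}f_2$ is locally bounded by $\HLM(f_2)$ pointwise and the operator $\sup_r$ is then controlled by a Hardy--Littlewood-type bound as in Lemma~\ref{lemma:cauchy-schwarz}, so that piece causes no difficulty. The substance is to bound $\bigl\|\sup_{r\in 2^\integers}|B_{r,k}(f_1,f_2)|\bigr\|_{L^q}$ by $C 2^{-\delta k}\|f_1\|_{p_1}\|f_2\|_{p_2}$ for some $\delta>0$, after which one sums the geometric series in $k$.

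For the single-frequency-block estimate I would first dispose of the supremum over scales cheaply: since there are $O(k)$ relevant scales interacting with a fixed frequency block $2^k$ (the others being negligible by non-stationary phase once $r|\xi|$ is far from $2^k$), it suffices to lose a factor $k$, i.e.\ to prove $\|B_{r,k}(f_1,f_2)\|_{L^q}\le C 2^{-\delta k}\|f_1\|_{p_1}\|f_2\|_{p_2}$ uniformly in $r$ and then absorb the polynomial loss into the exponential gain. By scaling we may take $r=1$. Now the trilinear smoothing inequality enters: pairing $B_{1,k}(f_1,f_2)$ against a third function $f_3$ produces the trilinear form $\int\!\!\int f_1(x+\gamma_1(t))f_2(x+\gamma_2(t))f_3(x)\eta(t)\,dt\,dx$ with $f_2$ frequency-localized to scale $2^k$, and Hypotheses 1--3 are precisely the conditions (linear independence of $\gamma_1',\gamma_2'$; the exponential non-degeneracy; and $|J|+|J'|\ne 0$) under which such a form enjoys a quantitative smoothing estimate gaining a power $2^{-\delta k}$ over the trivial bound. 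This is the heart of the matter and where Hypotheses 2 and 3 are used — Hypothesis 3 controlling the curvature/torsion that drives oscillatory decay and Hypothesis 2 ruling out the resonant exponential obstruction to smoothing.

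Having the single-scale $L^{p_1}\times L^{p_2}\to L^q$ bound with a gain $2^{-\delta k}$, the remaining work is to restore the supremum and the full exponent range without losing the gain. For the supremum I would use the standard square-function trick: dominate $\sup_{r\in 2^\integers}|B_{r,k}|$ by $\bigl(\sum_{r\in 2^\integers}|B_{r,k}|^2\bigr)^{1/2}$ after subtracting a smooth average (so the difference $B_{r,k}-\widetilde B_{r,k}$, where $\widetilde B$ has a bump of width $2^{-k}$ in the integration, is controlled), and then Cauchy--Schwarz in $k$ versus the geometric gain. To go from an estimate near $q\ge 1$ down to $q\le 1$ and to all $p_1,p_2>1$, I would run a Calder\'on--Zygmund / interpolation argument: an $L^2\times L^\infty\to L^2$ and $L^\infty\times L^2\to L^2$ bound together with a weak-type endpoint obtained by Calder\'on--Zygmund decomposition of $f_1$ (respectively $f_2$), using that the kernel of $B_{1,k}$, after the frequency cutoff, satisfies Hormander-type conditions at scale $2^{-k}$ with constants polynomial in $k$; multilinear interpolation then fills in the open region $p_1,p_2>1$. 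The main obstacle I anticipate is exactly the interface between the two halves: extracting from the trilinear smoothing inequality a conclusion in a form robust enough to be fed into the Calder\'on--Zygmund machinery and to survive the linearization of the supremum — in particular, keeping the $2^{-\delta k}$ gain through the weak-type endpoint and the multilinear interpolation, and handling the overlap of scales uniformly in the lacunary parameter.
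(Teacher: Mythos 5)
Your overall architecture (Littlewood--Paley decomposition, a trilinear smoothing gain, then Calder\'on--Zygmund/interpolation to reach all $p_1,p_2>1$) matches the paper's, but the way you organize the frequency decomposition creates a genuine gap. You decompose $f_2$ into \emph{absolute} frequency blocks $P_k f_2$, $|\xi|\sim 2^k$, and claim the block estimate $\norm{\sup_{r\in 2^{\integers}}|B_{r,k}(f_1,P_kf_2)|}_{L^q}\lesssim 2^{-\delta k}\norm{f_1}_{p_1}\norm{f_2}_{p_2}$, to be summed in $k$. This estimate is false: for scales $r\le 2^{-k}$ the function $P_kf_2(x+r\gamma_2(t))$ is essentially $P_kf_2(x)$ (no oscillation in $t$ at all), so the block operator is comparable to $\HLM f_1(x)\cdot|P_kf_2(x)|$, whose $L^q$ norm carries no gain in $k$ (take $\widehat{f_2}$ supported exactly in the block). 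Even in the oscillatory regime $r\ge 2^{-k}$, the smoothing gain is a power of $(r2^k)^{-1}$, i.e.\ of the frequency \emph{relative to the scale}, not of $2^{-k}$; so for fixed $k$ the supremum over admissible $r$ cannot produce a factor $2^{-\delta k}$, and your auxiliary claim that only $O(k)$ scales interact with the block is also unfounded (all $r\lesssim 2^{-k}$ interact, just without decay). The correct bookkeeping, which is what the paper does, is to decompose relative to the scale: at scale $r=2^{-k}$ one applies $Q_{k+n_j}$ to $f_j$ and indexes pieces by $\bn=(n_1,n_2)\in\naturals^2$, proves decay $2^{-\delta|\bn|}$ uniformly in $k$ (via Theorem~\ref{thm:trilinearsmoothing}, with the supremum over $k$ dominated by the sum $\sum_k$, which is summable in $L^1$ thanks to $L^2$ almost-orthogonality of $\tilde Q_{k+n_j}f_j$, Lemma~\ref{lemma:Ltwo}), and handles the low relative frequencies $P_kf_j$ at scale $2^{-k}$ by Hardy--Littlewood-type maximal bounds.

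A second, smaller gap: your endpoint scheme (Calder\'on--Zygmund decomposition of $f_1$, respectively $f_2$, plus $L^2\times L^\infty$-type bounds and multilinear interpolation) cannot reach the exponents with $q(p_1,p_2)\le 1$, i.e.\ $p_1,p_2$ both near $1$, which are the whole point of the theorem. One needs a genuinely bilinear weak-type estimate obtained by decomposing \emph{both} $f_1$ and $f_2$ simultaneously; the paper proves $\norm{M_\bn(f_1,f_2)}_{L^{1/2,\infty}}\lesssim(1+|\bn|^2)\norm{f_1}_1\norm{f_2}_1$ (Lemma~\ref{lemma:Lone}), where the polynomial loss in $|\bn|$ is then absorbed by interpolating against the exponentially decaying $L^2\times L^2\to L^1$ bound of Lemma~\ref{lemma:Ltwo}. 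Your concern about ``keeping the $2^{-\delta k}$ gain through the weak-type endpoint'' is resolved exactly this way: the endpoint bound need not carry any exponential gain, only polynomial growth in the decomposition parameter.
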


The range of exponents $q$ for which the conclusion holds for some $(p_1,p_2)$ 
extends below $q=1$, indeed, to all $q>\tfrac12$.

Lacey \cite{lacey} has shown that if 
%$\gamma'_j$ do not vanish,
%and if $\gamma_1,\gamma_2$ are linearly dependent over $\complex$,
%so that a change of variables with respect to $y$ makes 
$\gamma_j(t) = c_j t$ for distinct constants $c_j$
then $\calM$ satisfies the indicated inequalities in the range $q(p_1,p_2)>\tfrac23$.
The nonlinearizable situation seems to require different techniques.

A well known result for maximal linear operators 
in the same spirit as Theorem~\ref{thm:lacunary} states that if $k<d$, if
$\gamma:\reals^{k}\to\reals^d$ is real analytic in a neighborhood of a compact set $K$, 
and if the range of $\gamma$
is not contained in any affine subspace of $\reals^d$,
then the maximal function
$\sup_{r\in 2^{\integers}} \int_K |f(x+r\gamma(t))|\,\eta(t)\,dt$
is bounded on $L^p(\reals^d)$. 
See \cite{stein+wainger_BAMS} for an introduction to this circle of ideas.
A prototypical example is
\[ \scriptm^0 f(x) = \sup_{r\in 2^\integers} \big| \int_{S^{d-1}} f(x+ry)\,d\sigma(y)\big|\]
where $x\in\reals^d$, $f:\reals^d\to\complex$, and $\sigma$ is 
surface measure on the unit sphere $S^{d-1}\subset\reals^d$.
A key fact is that
if $\eta\in C^\infty$ is supported in $K$, then the Fourier transform of the measure 
defined by $d\mu(t) = \eta(\gamma(t))\,dt$ 
satisfies $\widehat{\mu(\xi)} = O(|\xi|^{-\delta})$ as $|\xi|\to\infty$, for some $\delta>0$. 
A central element of our analysis, Theorem~\ref{thm:trilinearsmoothing}, states that 
under natural hypotheses, a bilinear analogue of this Fourier transform decay property holds.

We do not know whether $\calM$ map $L^1\times L^p$ for $p>1$, or even  to weak $L^1$ for $p=1$. 
Even for maximal linear operators such as $\scriptm^0$,
it remains an open question whether weak type $(1,1)$ inequalities hold. Certain partial results
are known \cite{christHardy}, \cite{STW}.

For those exponents $(p_1,p_2)\in(1,\infty)^2$ satisfying $q>1$, that is, $p_1^{-1} + p_2^{-1}<1$,
Theorem~\ref{thm:lacunary} is an easy of consequence of linear one-dimensional Calder\'on-Zygmund theory.
It suffices to treat the case $p_2=\infty$ --- for then interchanging the indices $j=1,2$ and interpolating
gives the general case --- and this case follows from Lemma~\ref{lemma:logloss}, below.

\begin{corollary} \label{cor:bicircular}
For $r\in\reals$ define
\[B_r(f_1,f_2)(x) = \int_0^{2\pi} |f_1(x+r\cos(\theta))\cdot f_2(x+r\sin(\theta))|\,d\theta.\]
Let $q = q(p_1,p_2)\in(0,\infty)$ satisfy $q^{-1} = p_1^{-1} + p_2^{-1}$ for $(p_1,p_2)\in[1,\infty]^2$.
For each $(p_1,p_2)\in(1,\infty]^2$ there exists $C<\infty$ such that
\begin{equation}
\norm{\sup_{r\in 2^{\integers}} |B_r(f_1,f_2)|}_{L^q(\reals^1)} \le C \prod_{j=1}^2 \norm{f_j}_{L^{p_j}(\reals^1)}.
\end{equation}
\end{corollary}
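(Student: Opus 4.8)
The plan is to deduce Corollary~\ref{cor:bicircular} from Theorem~\ref{thm:lacunary} by checking that the curve $\gamma(\theta) = (\cos\theta,\sin\theta)$ satisfies the three hypotheses on a suitable interval, and then handling the two technical discrepancies between the theorem and the corollary: the presence of absolute values inside the integral defining $B_r$, and the fact that the corollary's $B_r$ has no cutoff $\eta$ (it integrates over the full period $[0,2\pi]$) and allows $p_j = \infty$.

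First I would verify the hypotheses. Parametrizing locally by $\gamma(\theta) = (\cos\theta,\sin\theta)$ on a small interval $I_0$ avoiding the points where $\gamma_1'$ or $\gamma_2'$ vanishes, Hypothesis~1 holds because $(-\sin\theta,\cos\theta)$ and... wait, more precisely $\gamma_1' = -\sin\theta$ and $\gamma_2' = \cos\theta$ are linearly independent as functions over $\reals$. Hypothesis~3 concerns $J(\theta) = \gamma_1'(\theta) - \gamma_2'(\theta) = -\sin\theta - \cos\theta$, whose zero set is $\{\theta : \tan\theta = -1\}$, at which $J'(\theta) = -\cos\theta + \sin\theta \ne 0$; so $|J| + |J'|$ never vanishes. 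Hypothesis~2 asks that no nontrivial combination $b_1 e^{a\cos\theta} + b_2 e^{a\sin\theta}$ be constant for $a \ne 0$; this follows by a short argument examining the behavior of $e^{a\cos\theta}$ and $e^{a\sin\theta}$ as $\theta$ varies (e.g.\ comparing derivatives, or noting $\cos\theta$ and $\sin\theta$ are algebraically independent modulo constants in a way incompatible with such a relation). So Theorem~\ref{thm:lacunary} applies on each such $I_0$, with any smooth compactly supported $\eta\ge 0$.

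Next I would remove the absolute values and the cutoff. The operator in the corollary has $|f_1(x+r\cos\theta)\,f_2(x+r\sin\theta)|$ under the integral, which equals the operator of the theorem applied to $|f_1|, |f_2|$ with the choice $\eta \equiv 1$ on the relevant range of $\theta$ — and since $L^{p_j}$ norms are insensitive to taking absolute values, it suffices to bound $\calM(|f_1|,|f_2|)$. To deal with the full circle I would use a partition of unity: cover $[0,2\pi]$ (equivalently $S^1$) by finitely many arcs $I_0^{(k)}$, each small enough that on it $\gamma$ can be parametrized with nonvanishing derivative and the hypotheses hold, subordinate a smooth partition $\sum_k \eta_k = 1$; the maximal function of $B_r$ over $[0,2\pi]$ is then pointwise at most $\sum_k \calM_k$, where $\calM_k$ is the lacunary maximal operator with cutoff $\eta_k$. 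Theorem~\ref{thm:lacunary} bounds each $\calM_k$ (for $p_1,p_2 > 1$), and summing the finitely many pieces gives the bound for $(p_1,p_2)\in(1,\infty)^2$. The endpoint cases where some $p_j = \infty$ are handled separately: as remarked in the excerpt just before the corollary, the case $p_2 = \infty$ (hence $q = p_1$) follows from Lemma~\ref{lemma:logloss} together with one-dimensional Calder\'on--Zygmund theory, and the diagonal point $p_1 = p_2 = \infty$ is immediate since then $\sup_r |B_r(f_1,f_2)| \le 2\pi \norm{f_1}_\infty \norm{f_2}_\infty$.

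The main obstacle is essentially bookkeeping rather than mathematics: one must be careful that near the four points where $\sin\theta$ or $\cos\theta$ vanishes the local parametrization of the arc by one of the coordinate functions degenerates, so the partition of unity and the choice of parametrizing variable must be arranged to avoid these. Since this is only a finite covering argument and the hypotheses are robust (they hold on the whole punctured circle, and by continuity on small arcs around the bad points as well, using the arclength or angle parametrization throughout), there is no genuine difficulty — Hypothesis~3 in particular was checked globally above. I expect the write-up to be short, the only substantive point being the verification of Hypothesis~2, for which I would give the elementary argument indicated above.
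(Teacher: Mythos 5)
Your proposal is correct and follows essentially the route the paper intends for Corollary~\ref{cor:bicircular}: verify Hypotheses 1--3 for the angle parametrization $\gamma(\theta)=(\cos\theta,\sin\theta)$ (for which $d\gamma/d\theta$ never vanishes, so there is in fact no need to avoid the four points where a single coordinate derivative vanishes), apply Theorem~\ref{thm:lacunary} to $|f_1|,|f_2|$ after a finite smooth partition of unity on the circle, and dispatch the exponents with some $p_j=\infty$ by the Calder\'on--Zygmund reduction via Lemma~\ref{lemma:logloss}, exactly as the paper indicates. The one step you leave implicit, Hypothesis~2, does go through by the derivative comparison you gesture at (if $b_1e^{a\cos\theta}+b_2e^{a\sin\theta}$ is constant with $a\ne 0$ and both $b_j\ne 0$, differentiating and then taking a logarithmic derivative forces $a\,\sin\theta\cos\theta\,(\sin\theta+\cos\theta)\equiv 1$ on an interval, which is impossible, and the cases with one $b_j=0$ are immediate), though the alternative appeal to ``algebraic independence'' of $\cos\theta$ and $\sin\theta$ should be dropped since $\cos^2\theta+\sin^2\theta=1$.
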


The quantity $J(\theta)$ defined above is equal to $\cos(\theta)+\sin(\theta)$
in this special case. It vanishes at $\theta = \tfrac{3\pi}4$ and at $-\tfrac\pi4$.
These points play distinguished roles in our analysis.

There have been several works concerning maximal operators associated to bilinear forms
\begin{equation} \label{2d-1} 
B_r(f_1,f_2)(x) = \int_{S^{2d-1}} |f_1(x+ry_1)\,f_2(x+ry_2)|\,d\sigma(y_1,y_2)
\end{equation}
with $(y_1,y_2)\in S^{2d-1}\subset \reals^d\times\reals^d$
and with $\sigma$ denoting surface measure on $S^{2d-1}$
for $d\ge 1$, with suprema taken over $r\in(0,\infty)$ and/or $r\in 2^\integers$.
After partial results by Barrionuevo, Grafakos, He, Honz\'{\i}k,
and Oliveira \cite{barrionuevo+etal} 
and by Heo, Hong, and Yang \cite{heo+hong+yang},
Jeong and Lee \cite{jeong+lee} characterized the full range of exponents for which
these maximal operators are bounded from $L^p(\reals^d)\times L^p(\reals^d)$ to $L^q(\reals^d)$
for each $d\ge 2$.
The more singular case of dimension $d=1$ was left open.
One starting point \cite{jeong+lee} is, for each $y_1$ in the unit ball,
to majorize the integral with respect to $y_2$ in the integral representation \eqref{2d-1}
by the $d$--dimensional spherical maximal function of $f_2$, evaluated at $x\in \reals^d$,
times $|f_1(x+ry_1)|$, multiplied by an appropriate scalar depending on $y_1$.
When $d-1=1$, however, this reduction breaks down.
Our analysis of $\calM$ for $d=1$ is rather different 
and relies on recent progress concerning multilinear smoothing inequalities.

One natural generalization to multilinear operators of higher degree is as follows.
Let $d\ge 1$.
Let $\sigma_d$ denote surface measure on the unit sphere $S^{d-1}\subset\reals^d$.
Let $e_j \in\reals^d$ be the $j$-th coordinate vector.
Define 
\[ \scriptm_d(f_1,\dots,f_d)(x) = \sup_{r\in 2^{\integers}} 
\int_{S^{d-1}} \prod_{j=1}^d |f_j(x+ r e_j\cdot y)|\,d\sigma(y)\]
for $x\in\reals^1$ and $f_j\in C^0(\reals^1)$.
We plan to analyze $\scriptm_d$ for $d\ge 3$ in a sequel.
%Given $(p_1,\dots,p_d)\in[1,\infty]^d$ define
%$q = q(p_1,\dots,p_d)\in(0,\infty]$ by $q^{-1} = \sum_{j=1}^d p_j^{-1}$.

%\begin{question}
%Does $M_d$ map $L^{p_1}\times\cdots\times L^{p_d}$ boundedly to $L^q$ for any $(p_1,\dots,p_d)\in(1,\infty]^d$?  
%\end{question}

\begin{comment}
\added{We should remove this theorem or change it to a question/conjecture. since we decided to not treat it here}
\begin{theorem} \label{thm:d>2}
For any $d\ge 2$, for any $(p_1,\dots,p_1)\in(1,\infty)^d$ such that $q(p_1,\dots,p_d) > \tfrac12$,
$M_d$ maps $(L^{p_1}\times\cdots\times L^{p_d})(\reals^d)$ boundedly to $L^q(\reals^d)$.
\end{theorem}
\end{comment}

The corresponding result for $M^{\text{full}}$ is as follows.
We also consider a variant $M^{\text{full}}_0$.
Let $\scripts\subset S^1$ be a closed subset that
does not contain any of the four points $(\pm1,0)$ and $(0,\pm 1)$.
Let $\sigma$ denote arc length measure on $S^1$.
Define
\[ M^{\text{full}}_0(f_1,f_2)(x)
= \sup_{r>0} \int_\scripts |f_1(x+r\cos(y))\,f_2(x+r\sin(y))|\,d\sigma(y).\]

\begin{theorem}\label{thm:full}
Let $\Omega_0$ be the closed convex hull of $\{(1, 0), (0, 0), (0, 1)\}$,
minus $\{(1,0),(0,1)\}$.
Let $\Omega$ be the interior of the convex hull of
$\{(0, 0), (0, {1\over 2}), ({1\over 2}, 0), ({1\over 2}, {1\over 2}) \}$,
minus $\{(\tfrac12,\tfrac12)\}$.

\noindent
(i) For any $(p_1^{-1},p_2^{-1})\in \Omega_0$, 
$M^{\text{full}}_0$ maps $L^{p_1}\times L^{p_2}$ boundedly to $L^q(p_1,p_2)$.

\noindent
(ii) For any $(p_1^{-1},p_2^{-1})\in \Omega$, 
$M^{\text{full}}$ maps $L^{p_1}\times L^{p_2}$ boundedly to $L^q(p_1,p_2)$.
\end{theorem}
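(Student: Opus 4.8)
The plan is to deduce Theorem~\ref{thm:full} from the already-established lacunary estimate (Theorem~\ref{thm:lacunary}) together with a standard square-function/pigeonhole passage from lacunary to continuous suprema, combined with the elementary Cauchy--Schwarz bound $M^{\text{full}}(f_1,f_2)\le C_\tau\HLM_2(f_1)\HLM_\tau(f_2)$ mentioned in the excerpt. The key point is that for $r\in[1,2]$ (say) the operator $B_r$ depends smoothly on $r$, so a Sobolev embedding in the $r$-variable controls $\sup_{r\in[1,2]}|B_r(f_1,f_2)(x)|$ by $\big(\int_1^2|B_r(f_1,f_2)(x)|^2\,dr\big)^{1/2}+\big(\int_1^2|\partial_r B_r(f_1,f_2)(x)|^2\,dr\big)^{1/2}$; scaling then reassembles the full supremum $\sup_{r>0}$ from the lacunary pieces, at the cost of an extra derivative landing on $\gamma$, which is harmless since $\partial_r[f_j(x+r\gamma_j(t))]=\gamma_j'(t)f_j'(x+r\gamma_j(t))$ can be integrated by parts in $t$ against $\eta$.

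First I would reduce part (ii) to part (i): the region $\Omega$ is the dilate by $\tfrac12$ of (a subregion of) $\Omega_0$ intersected with the square $[0,\tfrac12]^2$, and on the edge $p_2=\infty$ one has $B_r(f_1,f_2)\le\|f_2\|_\infty\int|f_1(x+r\gamma_1(t))|\eta(t)\,dt$, which is controlled by the linear lacunary/continuous maximal function along the curve $\gamma_1$ — bounded on $L^{p_1}$ for all $p_1>1$ by the classical Stein--Wainger-type theory invoked in the excerpt, once Hypothesis~1 guarantees $\gamma_1$ is nonconstant with nonvanishing derivative. By symmetry the edge $p_1=\infty$ is likewise handled, and then bilinear interpolation (e.g.\ multilinear Marcinkiewicz/Riesz--Thorin between the $L^{p_1}\times L^\infty$, $L^\infty\times L^{p_2}$ endpoints and an interior $L^2\times L^2$ point coming from Theorem~\ref{thm:lacunary} after the square-function upgrade) fills out the open region $\Omega$. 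The exclusion of $(\tfrac12,\tfrac12)$ is exactly the failure of the endpoint $q=1$, $p_1=p_2=2$, which is expected and not claimed.

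For part (i), the set $\scripts$ is a compact arc of $S^1$ bounded away from the four axis points $(\pm1,0),(0,\pm1)$, which is precisely where $\gamma(\theta)=(\cos\theta,\sin\theta)$ has $\gamma_1'=-\sin\theta=0$ or $\gamma_2'=\cos\theta=0$; staying away from them ensures that on $\scripts$ both $|\gamma_1'|$ and $|\gamma_2'|$ are bounded below, so each factor can absorb a $\partial_r$ via integration by parts in $\theta$ without creating endpoint degeneracies, and Hypotheses~1--3 hold on any subinterval. Thus I would fix a smooth $\eta$ supported in a neighborhood of $\scripts$ inside such a subinterval, invoke the Sobolev-in-$r$ argument above to bound $M_0^{\text{full}}(f_1,f_2)$ by $C\sum_{k}\big(\sup_{r\in 2^k[1,2]}|B_r^{(0)}|+\sup_{r\in 2^k[1,2]}|B_r^{(1)}|\big)$-type square-function pieces where $B^{(0)},B^{(1)}$ are operators of the same form $B_r$ but with auxiliary weights $\eta$ replaced by $\eta$ or $\gamma_j'$-weighted modifications, each still covered by Theorem~\ref{thm:lacunary}; summing the resulting $\ell^q$ series over dyadic scales $k$ is legitimate for $q\le 1$ after one more application of the trivial $\HLM_2\cdot\HLM_\tau$ bound to gain an off-diagonal decay factor in $k$ (a standard interpolation-with-change-of-measure trick that converts an $\ell^\infty$-in-$k$ estimate and an $\ell^1$-summable-with-room estimate into an $\ell^q$ estimate). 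The region $\Omega_0$ — the triangle with vertices $(1,0),(0,0),(0,1)$ minus the two open-edge vertices — is then reached by interpolating the interior estimate just obtained with the two edge estimates $L^{p_1}\times L^\infty$ and $L^\infty\times L^{p_2}$ from the linear theory.

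The main obstacle I anticipate is the passage from the lacunary supremum to the continuous supremum \emph{uniformly down to the critical exponents}: the naive Sobolev embedding in $r$ costs $L^2$-based control of $\partial_r B_r$, and when $q$ is close to its minimal value this square function must itself be estimated in $L^q$ with $q<1$, where duality and standard Littlewood--Paley tools are unavailable. The resolution will be to run the square-function estimate at a safely subcritical pair $(p_1,p_2)$ where $q>1$ (there Calder\'on--Zygmund duality works and the $\partial_r$-modified operators are handled just like $B_r$ itself), obtain the full-supremum bound there, and then recover the critical region by interpolating this full-supremum bound against the trivial pointwise bound $M^{\text{full}}\le C_\tau\HLM_2(f_1)\HLM_\tau(f_2)$, which already lives on $L^{p_1}\times L^{p_2}$ for all $p_1,p_2>2$; the open convex region $\Omega$ (resp.\ $\Omega_0$) is exactly the convex hull one gets from these input points, with the excluded corner $(\tfrac12,\tfrac12)$ (resp.\ the two excluded edge-vertices) reflecting the unavoidable loss at the endpoints. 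A secondary technical nuisance is checking that the $\gamma_j'$-weighted variants of $B_r$ produced by integration by parts genuinely satisfy Hypotheses~1--3 on the chosen subinterval — but since those hypotheses are open conditions that hold on $\scripts$ for the circle and are stable under shrinking $I_0$, this is routine.
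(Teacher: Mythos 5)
Your central mechanism --- upgrading the lacunary Theorem~\ref{thm:lacunary} to the continuous supremum by a Sobolev embedding in $r$ --- does not go through as described. Writing $\partial_r B_r$ puts a full derivative on $f_1$ or $f_2$, and the integration by parts in $t$ you invoke (via $\gamma_j'(t)f_j'(x+r\gamma_j(t))=r^{-1}\tfrac{d}{dt}f_j(x+r\gamma_j(t))$) merely transfers that derivative to the \emph{other} factor; it does not remove it. On frequency-localized pieces the derivative costs a factor comparable to $2^{k+n_j}$, while the only gain available from the smoothing input is $2^{-\delta|\bn|}$ with a small, unquantified $\delta$, so the loss cannot be absorbed. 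There is also structural evidence that no soft lacunary-to-full passage exists here: the paper's counterexample (indicator functions of $\delta$-intervals) shows $q(p_1,p_2)\ge 1$ is \emph{necessary} for $M^{\text{full}}_0$, whereas the lacunary theorem reaches $q>\tfrac12$. Moreover, even granting your upgrade wherever you propose to run it, your exponent bookkeeping misses part of the claimed region: every input you feed into interpolation (the square-function upgrade at ``safely subcritical'' pairs with $q>1$, the edge bounds with $p_j>1$, and the $\HLM_\tau\cdot\HLM_\tau$ bound for $p_1,p_2>2$) satisfies $p_1^{-1}+p_2^{-1}<1$, and interpolation stays inside the convex hull of the inputs, so you can never reach the segment $p_1^{-1}+p_2^{-1}=1$, which is contained in $\Omega_0$ and is part of what Theorem~\ref{thm:full}(i) asserts.

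The paper's actual proof is elementary and uses neither Theorem~\ref{thm:lacunary} nor any square function. For (i): since $\scripts$ avoids the four points where $\cos'$ or $\sin'$ vanishes, the $\theta$-integral in one variable is a smooth one-dimensional average, giving the pointwise bounds $M^{\text{full}}_0(f_1,f_2)\le C\,\HLM(f_1)\,\norm{f_2}_\infty$ and its symmetric counterpart; these are exactly weak-type bounds at the two \emph{excluded vertices} $(1,0)$ and $(0,1)$ (i.e.\ $L^1\times L^\infty\to L^{1,\infty}$ and $L^\infty\times L^1\to L^{1,\infty}$), and bilinear interpolation of these with the trivial $L^\infty\times L^\infty\to L^\infty$ bound yields all of $\Omega_0$, including the critical segment $q=1$ that your scheme cannot reach. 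For (ii): $\Omega$ is precisely $\{p_1,p_2>2\}$, and the pointwise bound $M^{\text{full}}(f_1,f_2)\le C_\tau\,\HLM_\tau(f_1)\,\HLM_\tau(f_2)$ for every $\tau>2$ --- which the paper proves in this very section (Lemma~\ref{lemma:cauchy-schwarz}) by decomposing dyadically in the angular distance $2^{-n}$ to the degenerate angles and applying Cauchy--Schwarz/H\"older, so you should not treat it as an external given --- already yields part (ii) in full via H\"older and $L^{p/\tau}$-boundedness of $\HLM$. Thus the correct proof requires the weak-type vertex estimates you omitted, and none of the lacunary machinery you built the argument around.
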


Part (ii) of Theorem~\ref{thm:full} has been obtained independently by
Dosidis and Ramos \cite{dosidis_ramos}.
Those authors have also treated the generalizations
$\sup_{r\in(0,\infty)} \int_{S^{d-1}} \prod_{j=1}^d |f_j(x+re_j\cdot y)|\,d\sigma(y)$
of $M^{\text{full}}$ for $d\ge 3$.

\section{Single scale inequalities}

\subsection{Lebesgue norm bounds for the bilinear operator $\scriptb$}

Define
\begin{equation}
\scriptb(f_1,f_2)(x) = \int \prod_{j=1}^2 (f_j\circ\varphi_j)(t)\,\eta(t)\,dt.
\end{equation}
Here we analyze this basic bilinear operator, whose definition involves no supremum.

\begin{lemma} \label{lemma:mostbasic}
Suppose that $d\gamma/dt$ vanishes nowhere on $I_0$ and that
$\gamma$ satisfies hypothesis \eqref{hypothesis1}. Then
$\scriptb$ maps $L^{p_1}\times L^{p_2}$ boundedly to $L^{q(p_1,p_2)}$ for all $p_1,p_2\in(1,\infty]$.

If $J(t)$ vanishes nowhere on $I_0$ then
$\scriptb$ maps $L^1\times L^1$ boundedly to $L^1\cap L^{1/2}$.
\end{lemma}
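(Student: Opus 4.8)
The plan is to reduce the multiplicative structure of $\scriptb$ to a change of variables in a single integral, exploiting the nonvanishing of $d\gamma/dt$. Since $d\gamma/dt = (\gamma_1'(t),\gamma_2'(t))$ vanishes nowhere, at each point of $I_0$ at least one of $\gamma_1',\gamma_2'$ is nonzero; by partitioning $I_0$ into finitely many subintervals (absorbing the partition into $\eta$ via a smooth decomposition of unity) we may assume one coordinate, say $\gamma_1$, is strictly monotone and $C^\infty$ with nonvanishing derivative on the support of $\eta$. On such a piece, substituting $s = \gamma_1(t)$ turns the factor $f_1(x+rs)$ into an honest one-variable multiplier, and $f_2(x + r\gamma_2(\gamma_1^{-1}(s)))$ becomes $f_2$ composed with a smooth function of $s$. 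Absorbing $r$ and the translation by $x$ is harmless because $L^p$ norms are dilation- and translation-covariant with the right homogeneity; so the claimed bilinear bound reduces to a single-scale model.

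For the first assertion ($p_1,p_2\in(1,\infty]$), I would proceed as follows. After the reduction, write $\scriptb(f_1,f_2)(x) = \int f_1(x+s)\, f_2(x+\psi(s))\, a(s)\, ds$ with $a$ smooth, compactly supported, and $\psi = \gamma_2\circ\gamma_1^{-1}$ (modulo the rescaling, which I suppress). The key structural input is hypothesis \eqref{hypothesis1}: $|J|+|J'|\ne0$ means $\psi' - 1$ and $\psi''$ do not vanish simultaneously after translating back, i.e. $\psi'$ is not identically $1$ on any subinterval and the set where $\psi'(s)=1$ is discrete. Splitting the $s$-support further, on each piece either $\psi' \ne 1$ (so the map $s \mapsto (s,\psi(s))$ is transverse to the diagonal direction and one can estimate the bilinear integral by Hölder directly, bounding it by $\|f_1\|_{p_1}\|f_2\|_{p_2}$ times the length of the piece, uniformly in $x$), or $\psi'(s_0)=1$ but $\psi''(s_0)\ne0$, near which one gets genuine curvature and can use the same Hölder argument since it only needs $a\in L^1$. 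Actually the cleanest route: for $p_1,p_2 > 1$ with $q\ge 1$ (i.e. $p_1^{-1}+p_2^{-1}\le 1$) apply Hölder in $s$ pointwise in $x$, then Minkowski/Hölder in $x$; for the range $q<1$ interpolate against the trivial $L^\infty\times L^\infty\to L^\infty$ estimate and an $L^1\times L^1$-type endpoint. The nonvanishing of $d\gamma/dt$ alone suffices for $p_j\in(1,\infty]$; hypothesis \eqref{hypothesis1} is what rescues the endpoints.

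For the second assertion, assume $J$ vanishes nowhere, so globally on $\mathrm{supp}(\eta)$ one has $\psi'(s)\ne 1$, i.e. the parametrized curve $t\mapsto(\gamma_1(t),\gamma_2(t))$ is everywhere transverse to the line $\{y_1=y_2\}$ and to the anti-diagonal issues as well. Then the change of variables $(u,v) = (x+r\gamma_1(t),\, x+r\gamma_2(t))$ — more precisely, for fixed $r$ the map $t\mapsto$ (the two arguments) together with the translation $x$ — has Jacobian comparable to $|J(t)|\gtrsim 1$, so $\scriptb(f_1,f_2)$ is controlled by a two-dimensional averaging operator applied to $f_1\otimes f_2$ restricted to the diagonal. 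Concretely, $\|\scriptb(f_1,f_2)\|_{L^1} \le \int\!\!\int |f_1(x+r\gamma_1(t))f_2(x+r\gamma_2(t))|\,\eta(t)\,dt\,dx = \int\!\!\int |f_1(x+r\gamma_1(t))f_2(x+r\gamma_2(t))|\,dx\,\eta(t)\,dt \le \|f_1\|_1\|f_2\|_1 \cdot \|\eta\|_1$ by Fubini and translation-invariance in $x$ — wait, that last step needs the two arguments to move independently, which is exactly transversality; without it one only gets $L^1\times L^\infty$. So instead: freeze the nonvanishing of $J$ to make the substitution $s = \gamma_1(t)$, giving $\scriptb(f_1,f_2)(x) = \int f_1(x+rs) f_2(x+ r\psi(s)) b(s)\,ds$ with $\psi' - 1$ bounded away from $0$; then $\|\scriptb(f_1,f_2)\|_{L^1_x} \le \int |b(s)|\int |f_1(x+rs)||f_2(x+r\psi(s))|\,dx\,ds$, and in the inner integral change variables $x\mapsto x - rs$, then note the argument of $f_2$ becomes $x + r(\psi(s)-s)$ with $\frac{d}{ds}(\psi(s)-s) = \psi'(s)-1$ bounded below — but $x$ is the integration variable, not $s$, so this does not immediately split. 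The correct maneuver is the joint change of variables $(x,s)\mapsto (x+rs, x+r\psi(s))$ whose Jacobian is $r|\psi'(s)-1| = r|J|/|\gamma_1'|\gtrsim r$, yielding $\|\scriptb(f_1,f_2)\|_{L^1}\lesssim r^{-1}\cdot r \cdot \|f_1\|_1\|f_2\|_1 \lesssim \|f_1\|_1\|f_2\|_1$ after accounting for the $r$-scaling built into the reduction (for the single-scale lemma $r$ is a fixed constant, say $r=1$, so this is clean). The $L^{1/2}$ bound then follows by combining this $L^1$ bound with the trivial pointwise bound $\scriptb(f_1,f_2)(x)\le \|f_1\|_\infty\|f_2\|_\infty\|\eta\|_1$ is the wrong direction; instead interpolate the $L^1\times L^1\to L^1$ bound with an $L^\infty\times L^\infty\to L^\infty$ bound? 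That gives intermediate $q\ge1$. To reach $q=1/2$ one uses that $\scriptb(f_1,f_2)(x)^{1/2}\le$ a product of square-root factors and a Cauchy–Schwarz / Loomis–Whitney-type inequality: $\int \scriptb(f_1,f_2)^{1/2} \le \big(\int |f_1|\big)^{1/2}\big(\int|f_2|\big)^{1/2}\|\eta\|_1^{1/2}$ directly by Cauchy–Schwarz in the product measure, which needs no transversality at all. So the $L^{1/2}$ half is soft; the $L^1$ half is where $J\ne 0$ enters, via the joint-variables Jacobian computation.

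The main obstacle is the $L^1\times L^1\to L^1$ bound: one must correctly execute the two-dimensional change of variables $(x,s)\mapsto(x+\gamma_1(t),x+\gamma_2(t))$ (equivalently in $(x,t)$, Jacobian $|J(t)|$) and check that transversality to the diagonal — which is precisely $J\ne0$ — makes this a diffeomorphism onto its image with controlled Jacobian, so that the diagonal restriction of $f_1\otimes f_2$ is integrated against an absolutely continuous measure. The $(1,\infty]$ range and the $L^{1/2}$ bound are comparatively routine (Hölder and Cauchy–Schwarz respectively), and the reduction to a single monotone coordinate via a partition of unity is standard.
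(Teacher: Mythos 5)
Your reduction and your treatment of the $L^1\times L^1\to L^1$ bound (the change of variables $(x,t)\mapsto(x+\gamma_1(t),x+\gamma_2(t))$ with Jacobian $|J(t)|$, nonvanishing by hypothesis) is exactly the paper's argument and is fine. The first genuine error is your claim that the $L^{1/2}$ half is ``soft'': the inequality $\int\scriptb(f_1,f_2)^{1/2}\le\big(\int|f_1|\big)^{1/2}\big(\int|f_2|\big)^{1/2}\|\eta\|_1^{1/2}$ does \emph{not} follow from Cauchy--Schwarz and is false without transversality. Take $\gamma_1\equiv\gamma_2$ and $f_1=f_2=\one_{[0,\delta]}$: then $\scriptb(f_1,f_2)\asymp\delta$ on a set of measure $\asymp 1$, so the left side is $\asymp\delta^{1/2}$ while the right side is $\asymp\delta$. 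Since your purported derivation uses no transversality, it cannot be valid (Jensen/Cauchy--Schwarz in $t$ goes the wrong way for the exponent $\tfrac12$). The paper's proof of the $L^{1/2}$ bound is not soft: it partitions $\reals$ into unit intervals $I_n$, uses $\int_{I_n}\scriptb^{1/2}\le|I_n|^{1/2}\big(\int_{I_n}\scriptb\big)^{1/2}$, invokes the $L^1\times L^1\to L^1$ bound (this is where $J\ne0$ enters) on each localized piece, exploits the finite range of $\gamma$ on the support of $\eta$ so that only boundedly many triples $(n,n_1,n_2)$ interact, and resums by Cauchy--Schwarz.

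The second gap is the first assertion, where hypothesis \eqref{hypothesis1} allows $J$ to have simple zeros: for the essential range $q(p_1,p_2)<1$ you propose interpolating with ``an $L^1\times L^1$-type endpoint,'' but no such endpoint exists there. If $J(\bar t)=0$ (even with $J'(\bar t)\ne 0$), taking $f_j=\one$ of $\delta$-intervals centered at $\gamma_j(\bar t)$ gives $\scriptb(f_1,f_2)\gtrsim\delta$ on a set of measure $\gtrsim\delta^{1/2}$, so $\|\scriptb(f_1,f_2)\|_1\gtrsim\delta^{3/2}$ while $\|f_1\|_1\|f_2\|_1\asymp\delta^2$; thus $L^1\times L^1\to L^1$ fails, and your only route to an $L^{1/2}$-type substitute is the invalid step above. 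Your closing sentence is also backwards: nonvanishing of $d\gamma/dt$ alone does \emph{not} suffice for all $p_j\in(1,\infty]$ (the paper's Remark shows failure for $p_1^{-1}+p_2^{-1}>\tfrac32$, which includes exponents with $p_1,p_2>1$, when $J$ and $J'$ vanish simultaneously), and \eqref{hypothesis1} does not rescue the $L^1\times L^1$ endpoint --- that requires $J$ nowhere zero. What is missing, and what the paper supplies, is a dyadic decomposition around the zeros of $J$: on $|t-\bar t|\asymp r$ one has $|J|\asymp r$, and localizing to intervals of length $r$ yields $L^1\times L^1\to L^{1/2}$ uniformly in $r$, together with $O(1)$ bounds for $L^1\times L^\infty\to L^1$ and $L^\infty\times L^1\to L^1$ and an $O(r)$ bound for $L^\infty\times L^\infty\to L^\infty$; interpolation gives a gain $O(r^{\theta})$ with $\theta=\theta(p_1,p_2)>0$ exactly when $p_1,p_2>1$, and summing over $r=2^{-k}$ concludes. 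Without some such mechanism producing a summable gain in $r$, your sketch does not reach $q<1$ (and your pointwise H\"older bound ``uniformly in $x$'' only controls an $L^\infty$ norm, not $L^q$).
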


Kenig and Stein \cite{kenig+stein} have observed that the second conclusion holds
when $\varphi_j$ are independent linear mappings.  
When $J$ vanishes nowhere, their analysis applies with no significant changes.
In this nonvanishing case, other results follow by interpolating this
bound for $L^1\times L^1$ with
trivial bounds $L^{p_1}\times L^{p_2}\to L^{q(p_1,p_2)}$
with $(p_1,p_2)$ equal to each of $(1,\infty)$, $(\infty,1)$, and $(\infty,\infty)$.

\begin{proof}
First consider the case in which $J$ vanishes nowhere. 
Assuming without loss of generality that $f_j\ge 0$, 
\[\norm{\scriptb(f_1,f_2)}_{L^1} 
= \int_{\reals} \int_{\reals} \prod_{j=1}^2 f_j(x+\gamma_j(t))\,\eta(t)\,dt\,dx.\]
The Jacobian determinant of the mapping $(x,t)\mapsto (x+\gamma_1(t),\,x+\gamma_2(t))$
	is $|\dot\gamma_1(t)-\dot\gamma_2(t)| = |J(t)|$.
From this, the $L^1\times L^1\to L^1$ bound follows. 

For the $L^{1/2}$ bound in the case in which $J$ does not vanish,
we localize and invoke the $L^1$ bound.
Partition $\reals^1$ into intervals $I_n$ of lengths equal to $1$.
Write $\sum_{n,n_1,n_2}^*$ to indicate a sum over all triples $(n,n_1,n_2)$
satisfying $|n_1-n|+|n_2-n|\le C$ for a sufficiently large constant $C$.
Then
\begin{align*}
\int_{\reals} \scriptb(f_1,f_2)(x)^{1/2}\,dx
&	= \sum_n \int_{I_n} \scriptb(f_1,f_2)(x)^{1/2}\,dx
\\&
= \sum_{n,n_1,n_2}^* 
	\int_{I_n} \scriptb(f_1\one_{I_{n_1}},f_2\one_{I_{n_2}})^{1/2}\,dx
\\&
\le C \sum_{n,n_1,n_2}^* 
	\left(\int_{I_n} \scriptb(f_1\one_{I_{n_1}},f_2\one_{I_{n_2}})\,dx \right)^{1/2}
\\&
\le C \sum_{n,n_1,n_2}^* 
\prod_{j=1}^2 \norm{f_j \one_{I_{n_j}}}_1^{1/2}
\\&
	\le C \sum_{n} 
	\prod_{j=1}^2 \norm{f_j \one_{I_{n}^*}}_1^{1/2}
\end{align*}
where $I_{n}^*$ is the interval of length $C$ with the same center as $I_n$,
for a sufficiently large constant $C$.
By the Cauchy-Schwarz inequality, the last line is $O(\norm{f_1}_1^{1/2}\norm{f_2}_1^{1/2})$.

Next consider the case in which $J$ may vanish, but only to first order.
By introducing a partition of unity and applying the result for nonvanishing $J$
proved above, and by making a change of variables with respect to $t$,
we may assume that $J$ vanishes only at $t=0$,
and thus that $|J(t)|$ is comparable to $t$.
We may also assume that $\gamma(0)=0$.

Let $r\in(0,1]$ be arbitrary
and consider $B_r(f_1,f_2)(x) = \int_{r\le |t| \le 2r}
\prod_{j=1}^2 (f_j\circ\varphi_j)(x,t)\,dt$.
Partition $\reals^1$ into intervals $I_n$ of length $r$.
Proceeding as above, obtain
\begin{align*}
\int_{\reals} B_r(f_1,f_2)(x)^{1/2}\,dx
&\le C' \sum_{n,n_1,n_2}^* r^{1/2}  
\Big(
\int_{I_n} \scriptb(f_1\one_{I_{n_1}},f_2\one_{I_{n_2}})\,dx
\Big)^{1/2} 
\\&
\le C'' r^{1/2} \sum_{n,n_1,n_2}^* 
\big( r^{-1} \prod_{j=1}^2 \norm{f_j \one_{I_{n_j}}}_1\big)^{1/2}
\end{align*}
for certain constants $C',C''<\infty$.
This again is $O(\norm{f_1}_1^{1/2}\norm{f_2}_1^{1/2})$, uniformly in $r$.
On the other hand, $B_r$ is $O(1)$ from $L^1\times L^\infty$ to $L^1$
and likewise from $L^\infty\times L^1$ to $L^1$,
and is $O(r)$ from $L^\infty\times L^\infty$ to $L^\infty$.
By interpolating these bounds, taking $r = 2^{-k}$ for $k\in\naturals$,
and summing over $k$, we obtain the desired conclusion
for $L^{p_1}\times L^{p_2}$ whenever $p_1,p_2>1$.
\end{proof}

\noindent{\bf Remark.}\ 
The hypothesis that $J,J'$ do not vanish simultaneously is necessary 
for $\calM$ to be bounded for the full range of exponents
indicated in Theorem~\ref{thm:lacunary}. Indeed, it is necessary even for
$\scriptb$ to satisfy those inequalities.
That is, if there exists $\bart$ satisfying $\eta(\bart)\ne 0$,
$J(\bart)= 0$, and $J'(\bart)= 0$,
then whenever $p_1^{-1}+p_2^{-1} >\tfrac32$,
$\scriptb$ fails to map $L^{p_1}\times L^{p_2}$ boundedly to $L^{q(p_1,p_2)}$.

This can be seen by choosing $f_j$ to be the indicator
function of an interval of length $\delta$ centered at $\gamma_j(\bart)$ for $j=1,2$.
Then for all sufficiently small $\delta$, $\scriptb(f_1,f_2)(x)\gtrsim \delta$ 
on an interval of length $\gtrsim\delta^{1/3}$ centered at $0$.
Then $\norm{\scriptb(f_1,f_2)}_q\gtrsim \delta\cdot\delta^{3/q}$,
while $\norm{f_j}_{p_j}\asymp \delta^{1/p_j}$.
When $p_1^{-1}+p_2^{-1} >\tfrac32$, 
the ratio $\norm{\scriptb(f_1,f_2)}_q/\norm{f_1}_{p_1}\norm{f_2}_{p_2}$
tends to infinity as $\delta$ tends to zero. \qed

\noindent {\bf Remark.}\ 
Consider forms
\[\scriptb(f_1,f_2,\dots,f_n) = \int_\reals \prod_{j=1}^n f_j(x+\gamma_j(t))\,\eta(t)\,dt\]
with $n\ge 3$. The above analysis does not apply for $\bff = (f_1,\dots,f_n) \in L^1\times \cdots \times L^1$.
Under mild hypotheses, results for $L^1\times L^1\times L^\infty \cdots\times L^\infty$,
with exactly two factors in $L^1$ and $n-2$ factors in $L^\infty$,
can be obtained by the above reasoning. The indices $j$ can be permuted, and the resulting
bounds interpolated.
Can one go beyond results obtained in this way?

The case $n=3$ with each $\gamma_j(t) = a_j t$ linear was investigated in \cite{elementary!},
where it was shown that for $(a_1,a_2,a_3) = (1,-1,\alpha)$,
certain nontrivial bounds hold for rational $\alpha$, but depend on 
the Diophantine character of $\alpha$, and break down for irrational $\alpha$. 
The authors are not aware of work for nonlinear $\varphi_j$ for $n = 3$
that goes beyond results obtainable from $L^1\times L^1\times L^\infty$ bounds.  \qed

\subsection{Trilinear smoothing inequality}\label{subsection:Qtsi}

Let $\varphi_j:\reals^2\to\reals^1$ be real analytic mappings.
Let $\eta\in C^\infty_0(\reals^2)$ be infinitely differentiable and have compact support.
Consider a trilinear form
\begin{equation} \label{trilinearform}
\scriptt(\bff) = \int_{\reals^2} \prod_{j=0}^2 (f_j\circ\varphi_j)(x)\,\eta(x)\,dx
\end{equation}
acting on ordered triples $\bff = (f_0,f_1,f_2)$ of functions $f_j:\reals^1\to\complex$.

\begin{theorem} \label{thm:trilinearsmoothing}
Let $U$ be a connected neighborhood of the support of $\eta$.
Let $\varphi_j:U\to\reals^1$ be real analytic.
Assume that for any $i\ne j\in\{0,1,2\}$,
$\det(\nabla\varphi_i,\nabla\varphi_j)$ does not vanish identically 
in any nonempty open set.
Assume that for any nonempty connected open subset $U'\subset U$,
for any $\bg\in C^\omega(\Phi(U'))$ that satisfies
$\sum_{j=0}^2 (g_j\circ\varphi_j) \equiv 0$
in $U'$, each $g_j$ is constant in $\varphi_j(U')$.

Then there exist $p<\infty$, $\sigma<0$, and $C<\infty$
such that for all Lebesgue measurable functions 
$\bff  = (f_0,f_1,f_2)\in (L^p\times L^p\times L^p)(\reals^1)$,
the integral defining $\scriptt(\bff)$ converges absolutely and 
\begin{equation}
|\scriptt(\bff)| \le C \prod_{j=0}^2 \norm{f_j}_{W^{p,\sigma}}.
\end{equation}
\end{theorem}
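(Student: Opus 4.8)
The plan is to prove Theorem~\ref{thm:trilinearsmoothing} by reducing the $W^{p,\sigma}$ bound to a decay estimate for the multilinear oscillatory-integral form obtained after taking Fourier transforms, and then to establish that decay by a sublevel-set / curvature analysis driven by the nondegeneracy hypotheses. First I would dualize and insert Fourier representations $f_j(y) = \int \widehat{f_j}(\xi_j)e^{iy\xi_j}\,d\xi_j$, writing $\scriptt(\bff)$ as an integral over $(\xi_0,\xi_1,\xi_2)$ of $\prod_j\widehat{f_j}(\xi_j)$ against the oscillatory integral
\begin{equation}
\scripti(\xi) = \int_{\reals^2} e^{i\sum_{j=0}^2 \xi_j\varphi_j(x)}\,\eta(x)\,dx .
\end{equation}
The Sobolev bound with negative exponent $\sigma$ and some finite $p$ will follow, by a now-standard interpolation/orthogonality argument (as in multilinear oscillatory integral theory), once I show a power decay $|\scripti(\xi)|\lesssim (1+|\xi|)^{-\delta}$ in a suitable sense — more precisely, decay in $|\xi|$ that survives after localizing $\xi$ to the regions where two of the three frequencies are comparably large. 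The point of working with $L^p$ for large but finite $p$ (rather than $L^2$) is to have room to absorb the "bad" directions in frequency space — the directions along which $\scripti$ does not decay, corresponding to the degenerate locus of the phase — by trivial estimates, exploiting that this locus is lower-dimensional.

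The core analytic step is the stationary-phase / sublevel-set estimate for $\scripti(\xi)$. The hypothesis that $\det(\nabla\varphi_i,\nabla\varphi_j)$ does not vanish on any open set means that for each pair $(i,j)$ the phase $\xi_i\varphi_i + \xi_j\varphi_j$ has nondegenerate Hessian off a real-analytic set of measure zero; by a resolution-of-singularities or a direct real-analytic stratification argument, one gets van der Corput-type decay $|\scripti(\xi)|\lesssim |\xi|^{-\epsilon}$ away from a conic neighborhood of finitely many exceptional directions in $\xi$-space. The second hypothesis — that no nontrivial real-analytic relation $\sum_j g_j\circ\varphi_j\equiv 0$ holds with all $g_j$ nonconstant on any subdomain — is exactly what rules out the scenario in which the three gradients $\nabla\varphi_j$ are simultaneously proportional on a positive-measure set, or more generally the scenario in which the exceptional directions for the three pairs conspire to leave a whole cone with no decay. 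So I would use Hypothesis (the functional-independence assumption) to show that the union of bad cones for the three pairs $(0,1),(0,2),(1,2)$ is a genuine lower-dimensional cone, and then handle each bad cone by noting that on it two frequencies must be comparable while the third is bounded relative to them — wait, this needs care: on the bad cone for the pair $(1,2)$, the decay from the $(0,1)$ and $(0,2)$ pairs still applies provided $\xi_0$ is not too small, and if $\xi_0$ is small then $\xi_1\varphi_1+\xi_2\varphi_2$ governs and the functional-independence hypothesis forbids $\varphi_1,\varphi_2$ being affinely related, giving decay there too. Thus every dyadic frequency shell contributes $\lesssim 2^{-\epsilon k}$ after summing, which is the desired smoothing.

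The main obstacle I anticipate is organizing the frequency decomposition so that the three pairwise curvature estimates combine into a single global estimate: the bad directions for the three pairs overlap in complicated ways, and near their common intersection one must extract decay from the finite-order-of-vanishing structure of the full analytic map $\Phi = (\varphi_0,\varphi_1,\varphi_2)$ rather than from any single pair. Here I would invoke resolution of singularities to reduce, on each chart, to a monomial phase, and then the functional-independence hypothesis translates into the statement that the relevant Newton-polyhedron data forces a strictly positive decay rate; controlling the uniformity of constants across the (finitely many) charts, and verifying that the exceptional set where no chart gives decay is still negligible in $L^p$ for $p$ large, is the delicate bookkeeping. A secondary technical point is the passage from the pointwise decay of $\scripti(\xi)$ to the Sobolev-space trilinear bound: one must check absolute convergence of the defining integral for $\bff\in(L^p)^3$, which requires combining the frequency decay with Hausdorff–Young in the non-decaying variables, and this is where the finiteness of $p$ and the smallness of $|\sigma|$ get pinned down.
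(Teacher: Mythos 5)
Your plan hinges on a reduction that does not work for this kind of ``implicitly oscillatory'' trilinear form. After inserting Fourier representations you must control $\int_{\reals^3}\prod_{j}\widehat{f_j}(\xi_j)\,\scripti(\xi)\,d\xi$, and at that point you only propose to use pointwise decay of the scalar integral $\scripti(\xi)$ together with trivial estimates on the bad cones. Two things go wrong. First, the stationary set is not ``a conic neighborhood of finitely many exceptional directions'': for a two-dimensional $x$-integral with a three-parameter phase $\sum_j\xi_j\varphi_j(x)$, the directions $\xi$ admitting a stationary point in the support of $\eta$ generically fill an open cone, and on it stationary phase gives at best $|\scripti(\xi)|\sim|\xi|^{-1}$. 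Second, and decisively, even granting such decay everywhere, taking absolute values in the three-dimensional $\xi$-integral is too lossy: with $\widehat{f_j}=\lambda^{-1/2}\one_{[\lambda,2\lambda]}$ (so $\|f_j\|_2=1$) the majorant $\int\prod_j|\widehat{f_j}(\xi_j)|\,(1+|\xi|)^{-1}\,d\xi$ is of order $\lambda^{1/2}$, which grows, while the true form is bounded. There is an intrinsic mismatch between three frequency variables and two integration variables, so no amount of pointwise multiplier decay, nor ``room from large $p$'', converts into the negative-order Sobolev bound by orthogonality/interpolation alone; one must exploit the structure of the $f_j$ on the non-decaying region, which is exactly where your argument has nothing left. (The linear model $\varphi_0=x_1,\ \varphi_1=x_2,\ \varphi_2=x_1+x_2$ is instructive: there $\scripti$ decays rapidly off a single line of directions, yet the smoothing inequality fails; so shrinking the bad cone is not the mechanism by which the functional-independence hypothesis acts.)

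The paper's proof is organized quite differently, and the difference is essential. It first reduces to bounding $|\scriptt(\bff)|\le C\lambda^{-\tau}$ for $\|f_j\|_\infty\le1$ with Fourier support at scale $\lambda$, then expands each $f_j$ in local Fourier series on intervals of length $\lambda^{-\gamma}$, $\gamma\in(\tfrac12,1)$; a counting plus Cauchy--Schwarz step reduces matters to functions that are ``local exponential monomials'' (one frequency per interval). Integration by parts kills nonstationary blocks, and the stationary blocks are absorbed into a sublevel set of the form $|h_1\,(F_1\circ\varphi_1)+h_2\,(F_2\circ\varphi_2)|=O(\eps)$, where $F_1,F_2$ are merely measurable step functions built from the local frequencies. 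The heart of the argument is then a rigidity statement: Lemma~\ref{lemma:sublevel}, proved by showing that on a large subset $F_1,F_2$ agree up to $O(\eps)$ with smooth functions, and then combining a quantitative jet-injectivity lemma (Lemma~\ref{lemma:jets}, where the hypothesis on solutions of $\sum_j g_j\circ\varphi_j\equiv0$ enters, made quantitative via \L{}ojasiewicz) with a Taylor-polynomial sublevel estimate (Lemma~\ref{lemma:stronghypothesis}). Finally the degenerate locus, where some $\det(\nabla\varphi_i,\nabla\varphi_j)$ vanishes, is handled by \L{}ojasiewicz bounds and a cube decomposition with $\delta=\lambda^{-r}$, not by resolution of singularities. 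If you want to repair your outline, the missing idea to import is precisely this passage from frequency-side multiplier decay to a sublevel-set rigidity problem for the local frequency profiles.
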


For the nondegenerate case in which $\nabla\varphi_i,\nabla\varphi_j$
are everywhere linearly independent for all $i\ne j$,
this is proved in reference \cite{trilinear}, by a roundabout argument based on a somewhat
more complicated theorem.  
This linear independence hypothesis is not satisfied in the application to Corollary~\ref{cor:bicircular}. 
We will sketch a proof for the general case that is
implicit in a combination of the works \cite{trilinear}, \cite{CDR}, and \cite{quadrilinear}.
%The general case of Theorem~\ref{thm:trilinearsmoothing} 
%will follow from this quantitative treatment of the nondegenerate case.

The next lemma connects the main hypothesis of Theorem~\ref{thm:trilinearsmoothing}
to the hypotheses of Theorem~\ref{thm:lacunary}.

\begin{lemma} \label{lemma:curvaturehypothesis}
Let $d\ge 2$.
Let $U\subset\reals^{d-1}$  be a connected nonempty open set.
Let $\gamma = (\gamma_1,\gamma_2,\dots,\gamma_{d}):U\to\reals^{d}$ be real analytic.
Let $\varphi_j(x,t) = x+\gamma_j(t)$ for $j\in\{1,2,\dots,d\}$ and $\varphi_0(x,t)=x$.
Assume that the Jacobian determinant of the mapping $(x,t_1,\dots,t_{d-1})
\mapsto \Phi(x,t_1,\dots,t_{d-1}) = (\varphi_i(x,t): 1\le i\le d)\in\reals^{d}$ does not vanish identically
on $\reals\times U$.
Assume also  that $\complex\cup\{\gamma_j: j\in\{1,2,\dots,d\}\}$ is linearly independent over $\complex$.

The following two conditions are equivalent. 
\newline (a)
There exist a nonempty connected open set $W\subset \reals\times U$
and nonconstant real analytic functions $f_j:\varphi_j(W)\to\complex$
satisfying $\sum_{j=0}^d (f_j\circ\varphi_j)(x,t)\equiv 0$ in $W$.
%The ordered triple $(\varphi_0,\varphi_1,\varphi_2)$
%satisfies the hypotheses of Theorem~\ref{thm:trilinearsmoothing}
\newline
(b) Either 
\newline \qquad \phantom{a}\qquad (i) $\gamma_i-\gamma_j$ is constant for some pair of indices $i\ne j\in\{1,2,\dots,d\}$,
\newline
or 
\newline \phantom{a}\qquad (ii) There exists $(a,b_1,b_2,\dots,b_d)\in\complex^{d+1}$
with $a\ne 0$ and at least one $b_j$ nonzero, such that
$\sum_{j=1}^d b_j e^{a\gamma_j(t)}$ is constant on $U$.
\end{lemma}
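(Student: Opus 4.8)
The plan is to prove the two implications (b)$\Rightarrow$(a) and (a)$\Rightarrow$(b) separately, the first being an explicit construction and the second the substantive half. For (b)$\Rightarrow$(a): if (i) holds, say $\gamma_i - \gamma_j \equiv c$, take $f_i(u) = u$, $f_j(u) = -u$, and $f_k \equiv 0$ for all other $k$ (including $k=0$); then $\sum_k (f_k\circ\varphi_k) = \varphi_i - \varphi_j = \gamma_i(t) - \gamma_j(t) = c$, and one absorbs the constant $c$ into $f_0$ (replacing $f_0\equiv 0$ by $f_0 \equiv -c$), or better, uses $f_i(u)=u$, $f_j(u)=-u-c$ so the sum is identically $0$; at least two of these $f_k$ are nonconstant as required. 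If (ii) holds, with $\sum_{j=1}^d b_j e^{a\gamma_j(t)} \equiv \kappa$ on $U$, set $f_j(u) = b_j e^{au}$ for $1\le j\le d$ and $f_0(u) = -\kappa$ (a constant — but we need a \emph{nonconstant} $f_0$... so instead note $\sum_{j=1}^d (f_j\circ\varphi_j)(x,t) = e^{ax}\sum_j b_j e^{a\gamma_j(t)} = \kappa e^{ax}$, hence setting $f_0(u) = -\kappa e^{au}$ gives $\sum_{j=0}^d (f_j\circ\varphi_j) \equiv 0$ with $f_0$ and at least one $f_j$ nonconstant since $a\ne 0$). In both cases, shrinking to a small connected $W\subset\reals\times U$ on which everything is real analytic completes the construction.

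For (a)$\Rightarrow$(b), the strategy is to differentiate the relation $\sum_{j=0}^d (f_j\circ\varphi_j)(x,t)\equiv 0$ and exploit the special form $\varphi_j(x,t) = x + \gamma_j(t)$, $\varphi_0(x,t)=x$. First I would apply $\partial_x$: since $\partial_x\varphi_j \equiv 1$ for every $j$ including $j=0$, one gets $\sum_{j=0}^d (f_j'\circ\varphi_j) \equiv 0$ on $W$, the same type of relation for the derivatives $f_j'$. Iterating, $\sum_{j=0}^d (f_j^{(k)}\circ\varphi_j)\equiv 0$ for all $k\ge 0$. Now fix a point and restrict to the line $t = $ const through it; along this line $\varphi_j(x,t_0) = x + \gamma_j(t_0)$, so we have, for each $k$, a relation $\sum_{j=0}^d f_j^{(k)}(x + c_j) \equiv 0$ in $x$ with $c_0 = 0$ and $c_j = \gamma_j(t_0)$. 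The key structural input is that a nontrivial relation of the form $\sum_j g_j(x + c_j)\equiv 0$ among analytic functions of one variable, with distinct shifts $c_j$, forces each $g_j$ to be a (finite combination of) exponential polynomial — this is the classical theory of linear difference/shift equations: passing to Fourier/Laplace side, $\sum_j e^{ic_j\xi}\widehat{g_j}(\xi)=0$ concentrates the spectrum of each $g_j$ on the common zero set, and the analyticity plus the structure of this zero set (a discrete set of $\xi$-values, up to the degenerate case where two shifts coincide) pins down $g_j$ as exponential-polynomial. The degenerate case — where $\gamma_i(t_0) = \gamma_j(t_0)$ for some $i\ne j$ at the chosen $t_0$, or more seriously, where this forces $\gamma_i - \gamma_j$ constant on all of $U$ — is exactly alternative (i) of (b).

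The main obstacle, and the step requiring the most care, is organizing the passage from the one-parameter family of shift relations (one for each frozen $t_0$, with $t_0$-dependent shifts $\gamma_j(t_0)$) back to a statement about the $\gamma_j$ as functions of $t$. Having learned that each $f_j$ is an exponential polynomial, $f_j(u) = \sum_\ell P_{j,\ell}(u) e^{a_\ell u}$ with finitely many exponents $a_\ell$ shared across $j$ (the set of exponents is constrained by the relation), I would substitute back into $\sum_{j=0}^d (f_j\circ\varphi_j)(x,t)\equiv 0$: the $x$-dependence separates as $\sum_\ell e^{a_\ell x}\big(\sum_j \tilde P_{j,\ell}(x) e^{a_\ell \gamma_j(t)}\big)\equiv 0$, and matching coefficients of the independent functions $x^m e^{a_\ell x}$ forces, for each relevant exponent $a_\ell$, an identity $\sum_{j} b_{j,\ell} e^{a_\ell \gamma_j(t)} \equiv (\text{const})$ in $t$ — which, after checking $a_\ell \ne 0$ can be arranged (the $a_\ell = 0$ part would make $f_j$ a polynomial, and the linear-independence-over-$\complex$ hypothesis on $\{1\}\cup\{\gamma_j\}$ together with the nonvanishing-Jacobian hypothesis rules out a nontrivial polynomial relation except through alternative (i)), is precisely alternative (ii). Throughout, the hypothesis that the Jacobian of $\Phi$ does not vanish identically is used to guarantee that the shifts $\gamma_j(t_0)$ are generically distinct (so we are in the nondegenerate shift-equation regime) and that no spurious collapse occurs; I would isolate this bookkeeping as the technical heart of the argument. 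One must also be careful that "nonconstant $f_j$" in (a) survives: if every nonconstant contribution came from the $j=0$ term alone, the relation would read $f_0(x) \equiv -\sum_{j\ge1}(f_j\circ\varphi_j)$ with the right side genuinely depending on $t$ unless the $\gamma_j$-dependence cancels, again funneling into (i) or (ii).
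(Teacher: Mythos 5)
Your construction for (b)$\Rightarrow$(a) is correct and essentially identical to the paper's. The problem is in (a)$\Rightarrow$(b), where your argument rests on a claim that is false: that a nontrivial relation $\sum_j g_j(x+c_j)\equiv 0$ among analytic functions of one variable with distinct shifts $c_j$ forces each $g_j$ to be an exponential polynomial. Already with two functions this fails completely: fix $c\ne 0$, take \emph{any} analytic $g_0$, and set $g_1(y)=-g_0(y-c)$; then $g_0(x)+g_1(x+c)\equiv 0$ with distinct shifts and $g_0$ arbitrary. The Fourier/Laplace heuristic you invoke does not repair this: the relation $\sum_j e^{ic_j\xi}\widehat{g_j}(\xi)=0$ is a single linear equation in $d+1$ unknown functions, so it does not concentrate the spectrum of any individual $g_j$ on a discrete set; rigidity of that kind only appears when one function satisfies several independent shift relations. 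Consequently, freezing $t=t_0$ and working with the resulting one-parameter family of shift identities (plus their $x$-derivatives, which carry no new information, being just the derivatives of the same identity) cannot deliver the exponential-polynomial structure your plan needs before its ``substitute back and match coefficients'' step; the whole second half of the argument has no foundation.

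The rigidity actually comes from the variation in $t$, not from any single frozen shift relation, and this is how the paper proceeds. After reducing (via a perturbation and analytic change of variables, using the nonvanishing of the Jacobian of $\Phi$) to coordinates where $\gamma_j(t)=t_j$ for $j<d$, one differentiates $\sum_{j=0}^d(f_j\circ\varphi_j)\equiv 0$ in $t_j$ to get a \emph{two-term} identity
$f'_j(x+t_j)+\tfrac{\partial\gamma_d}{\partial t_j}(t)\,f'_d(x+\gamma_d(t))\equiv 0$,
so that the ratio $f'_j(x)\big/f'_d\bigl(x+[\gamma_d(t)-\gamma_j(t)]\bigr)$ is independent of $x$; since $\gamma_d-\gamma_j$ is nonconstant (here alternative (i) is excluded), the shift $u=\gamma_d(t)-\gamma_j(t)$ genuinely varies, and the resulting functional equation (a Cauchy-type equation) forces $f'_j(y)=b_je^{ay}$, $f'_d(y)=b_de^{ay}$ with $b_d\ne0$. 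Only then does differentiation in $x$ and substitution give $\sum_{j=1}^d b_je^{a\gamma_j(t)}\equiv e^{-ax}f'_0(x)$, with $a\ne0$ yielding (ii) and $a=0$ excluded by the linear-independence hypothesis. If you want to salvage your outline, you must replace the false shift-equation lemma by an argument of this kind that exploits the $t$-dependence of the shifts; as written, the proof does not go through.
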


The linear independence hypothesis says that if $b_j\in\complex$ and $\sum_{j=1}^d b_j\gamma_j(t)$
is constant in some nonempty open set, then each $b_j=0$.

\begin{proof}
If there exist such $a,b_j$,
define $b_0 = -\sum_{j=1}^d b_j e^{a\gamma_j(t)}$, which is a constant by hypothesis.
Define $f_j(y) = b_j e^{ay}$ for each index $j\in\{0,1,2,\dots,d\}$.
Then 
\[ \sum_{j=0}^d (f_j\circ\varphi_j)(x,t)
=e^{ax} \big(b_0 +  \sum_{j=1}^d b_je^{a\gamma_j(t)}\big)
\equiv 0 \ \text{ on $\reals\times U$,} \]
and at least one function $f_j$ is nonconstant.

If there exist $i\ne k\in\{1,2,\dots,d\}$ for which
$\gamma_k-\gamma_i$ is identically equal to a constant $c\in\reals$, 
then define $f_j\equiv 0$ for every $j\notin\{i,k\}$, 
define $f_i$ to be an arbitrary nonconstant function,
and define $f_k(y)= -f_i(y-c)$. 
Then $\sum_{j=0}^d f_j\circ\varphi_j$ vanishes identically.

To prove the converse,
assume that the Jacobian determinant of $\Phi$ does not vanish identically,
and moreover that conclusion (i) does not hold; thus $\gamma_i-\gamma_j$ is nonconstant whenever 
$i\ne j\in\{1,\dots,d\}$.
Suppose that $f_j$ are real analytic,
that $\sum_{j=0}^d (f_j\circ\varphi_j)\equiv 0$ in a connected open subset $W\subset \reals \times U$,
and that (after permuting the indices if necessary) $f_d$ is nonconstant.
We must show that each $f_j$ is constant in $\varphi_j(W)$. 

There exists $\barz = (\barx,\bart)\in W$
such that $f'_d(\varphi_d(\barz))\ne 0$, and such that the Jacobian determinant $\det(D\Phi)$ does not vanish at $\barz$.
By hypothesis, $\nabla\gamma_d$ does not vanish identically.
Therefore by making an arbitrarily small perturbation of $\bart$ and a real analytic change of variables, 
we may assume that $\gamma_j(t)\equiv t_j$ for all $j\in\{1,2,\dots,d-1\}$,
$\nabla\gamma_d(\bart)\ne 0$, and $f'_d(\varphi_d(\barz))\ne 0$.

%	Then $f_k$ must be nonconstant for some $k\in\{1,2,\dots,d\}$,
%	and after permuting the indices we may assume that 
%	$\gamma_2-\gamma_1$ is nonconstant.

Differentiate with respect to $t$ to obtain
\begin{equation}\label{diffonce}
f'_j(x+\gamma_j(t)) + \frac{\partial\gamma_d}{\partial t_j}(t)\,f'_d(x+\gamma_d(t))\equiv 0
\end{equation}
in a neighborhood of $\barz$
for every $j\in\{1,2,\dots,d-1\}$.
	Therefore the ratio \[ \frac{f'_j(x)}{f'_d\big(x+[\gamma_d(t)-\gamma_j(t)]\big)} \] is locally independent of $x$.
Since $\gamma_d-\gamma_j$ is nonconstant by assumption, 
$(x,u)\mapsto \frac{f'_j(x)}{f'_d(x+u)}$ is locally a function of $u$ alone.
This implies that there exist $b_j,a\in\complex$ such that $f'_j(y)\equiv b_je^{ay}$,

for all $y$ in a neighborhood of $\varphi_j(\barz)$, for each $j\in\{1,2,\dots,d-1\}$.
Moreover, $b_d\ne 0$, since $f_d$ is assumed to be nonconstant.

By differentiating the relation 	
$\sum_{j=0}^d (f_j\circ\varphi_j)(x,t)\equiv 0$ with respect to $x$,
we find that
$\sum_{j=0}^d (f'_j\circ\varphi_j)(x,t)\equiv 0$.
Therefore
\begin{equation} \label{arelation} \sum_{j=1}^d b_j e^{a\gamma_j(t)} \equiv e^{-ax}f'_0(x)\end{equation}
in a nonempty open set. If $a\ne 0$, specializing to any particular value of $x$ reveals that the left-hand
side is contant, which is conclusion (ii).

If $a=0$ then $f'_j$ is constant for each $j\ge 1$, and by \eqref{arelation}, $f'_0$ is likewise constant.
Therefore $f_j(y) = b_j y+c_j$ for some constants $b_j,c_j\in\complex$, for every $j\in\{0,1,\dots,d\}$.
Thus
\[ b_0 x  + \sum_{j=1}^d b_j\cdot (x+\gamma_j(t)) \equiv -c\]
in some nonempty connected open set, for some $c\in\complex$.
Therefore $\sum_{j=1}^d b_j\gamma_j(t)$ is constant in a nonempty connected open set,
contradicting the hypothesis that $\complex\cup \{\gamma_j\}$ is linearly independent over $\complex$.
\end{proof}

\section{Operator decomposition and some reductions}

Choose an auxiliary function $\phi\in\scripts(\reals^1)$ 
such that $\widehat{\phi}$ has compact support
and satisfies $\widehat{\phi}\equiv 1$ in a neighborhood of $0$.
Define operators $P_k,Q_k$ by
\begin{align*}
	\widehat{P_k f}(\xi) = \widehat{f}(\xi) (2^{-k}\xi)
\end{align*}
and $Q_k = P_{k+1}-P_k$.
We will exploit the identity $I = P_k + \sum_{n=1}^\infty Q_{k+n}$, where $I$ is the identity
operator; this identity holds on $L^p(\reals)$ for each $k\in\integers$ and $p\in(1,\infty)$,
with convergence in the strong operator topology.
Since $\widehat{\psi}$ has compact support and vanishes identically in a neighborhood of $0$,
there exists $\tilde\psi\in\scripts$ 
and so that $\widehat{\tilde\psi}\equiv 1$ on the support of $\widehat{\psi}$
and $\widehat{\tilde\psi}(0)=0$.
Define $\tilde Q_k$ by $\widehat{\tilde Q_k f}(\xi) = \widehat{f}(\xi) \widehat{\tilde\psi}(2^{-k}\xi)$.
Then $\tilde Q_k\circ Q_k\equiv Q_k$.

For each $k\in\integers$ and $\bn = (n_1,n_2) \in \{0,1,2,\dots\}^2$ define
\begin{align}
	A_k(f_1,f_2)(x) & = \int \prod_{j=1}^2 f_j(x+2^{-k}\gamma(t))\,\eta(t)\,dt.
	\\
	T_{\bn,k}(f_1,f_2) &= A_k(Q_{k+n_1}f_1,Q_{k+n_2}f_2)
	\\
	M_{\bn}(f_1,f_2)(x) &= \sup_{k\in\integers} |A_k(Q_{k+n_1}f_1,Q_{k+n_2}f_2)(x)|
	\\
	S_{\bn}(f_1,f_2) &=  \sum_{k\in\integers} |T_{\bn,k}(f_1,f_2)|.
\end{align}
Then for any $f_1,f_2$,
\begin{align*}
	\calM(f_1,f_2)
	&\le 
	 \sup_{k\in\integers}
	|A_k(P_kf_1,P_k f_2)|
	+ \sup_{k\in\integers}
	|A_k(P_kf_1,(I-P_k) f_2)|
	\\ &\qquad\qquad
	+ \sup_{k\in\integers}
	|A_k((I-P_k)f_1,P_k f_2)|
	+ \sum_{\bn\in\naturals^2} M_{\bn}(f_1,f_2)
	\\
	&\le 
	 3\sup_{k\in\integers} |A_k(P_kf_1,P_k f_2)|
	+ \sup_{k\in\integers}
	|A_k(P_kf_1,f_2)|
	\\ &\qquad\qquad
	+ \sup_{k\in\integers}
	|A_k(f_1,P_k f_2)|
	+ \sum_{\bn\in\naturals^2} M_{\bn}(f_1,f_2).
\end{align*}

We will use the following well known variant of the Hardy-Littlewood maximal theorem.

\begin{lemma} \label{lemma:logloss}
Let $I\in\reals^1$ be a bounded interval with center $z\in\reals^1$.
Define 
\begin{equation}
\scriptn_I f(x) = \sup_{r\in 2^{\integers}} |I|^{-1} \int_I |f(x-ry)|\,dy.
\end{equation}
Then $\scriptn_I$ is of weak type $(1,1)$ with bound $O( \log(2+|z||I|^{-1}))$.
For each $p\in(1,\infty)$,
$\scriptn_I$ maps $L^p(\reals^1)$ to $L^p(\reals^1)$
	with operator norm $O\big((\log(2+|z||I|^{-1}))^{1/p}\big)$.
\end{lemma}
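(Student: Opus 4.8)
The plan is to reduce the assertion to a weak-type $(1,1)$ bound for $\scriptn_I$ with constant $O(N)$, where $N:=\log(2+|z||I|^{-1})$, and then recover the $L^p$ bounds by interpolation. First I would note that, $\scriptn_I$ being an averaging operator, $\scriptn_I f\le\|f\|_\infty$ pointwise, so $\scriptn_I$ is bounded on $L^\infty$ with norm $1$; Marcinkiewicz interpolation between this and the weak-type $(1,1)$ bound with constant $O(N)$ then yields operator norm $O(N^{1/p})$ on $L^p$ for every $p\in(1,\infty)$, as required. We may assume $f\ge0$. Since a change of variables in the defining integral shows $\scriptn_{cI}=\scriptn_I$ for every $c>0$, we may normalize $|I|=1$. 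If $|z|\le2$ then $I$ lies in an interval of length $\le5$ about the origin, so $x-rI\subseteq B(x,\tfrac52 r)$ for every $r\in2^\integers$; hence $\scriptn_I f\le5\,\HLM f$ and the claim follows from the Hardy--Littlewood maximal theorem (with $O(1)=O(N)$). So assume $|z|>2$; reflecting if necessary, take $z>2$ and write $I=[a,b]$ with $a=z-\tfrac12$, $b=z+\tfrac12$, so that $b-a=1$, $a\asymp b\asymp z$, and $x-rI=[x-rb,\,x-ra]$ is an interval of length $r$ with right endpoint $x-ra$.

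Fix $\lambda>0$, assume $\|f\|_1<\infty$, and set $E_\lambda=\{\scriptn_I f>\lambda\}$. For each $x\in E_\lambda$ choose $r(x)\in2^\integers$ with $\int_{J(x)}f>\lambda|J(x)|$, where $J(x):=x-r(x)I$; then $|J(x)|=r(x)\le\|f\|_1/\lambda$. By the Vitali covering lemma there is a pairwise disjoint subcollection $\{H_i\}$ of $\{J(x):x\in E_\lambda\}$ such that every $J(x)$ is contained in some $5H_i$. Since each $H_i$ has $f$-average exceeding $\lambda$, $\sum_i|H_i|\le\lambda^{-1}\|f\|_1$.

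The crux is to bound the measure of the ``users'' $F_i:=\{x\in E_\lambda:J(x)\subseteq5H_i\}$ of each $H_i$. For $x\in F_i$ one has $x=(\text{right endpoint of }J(x))+r(x)a$, with the right endpoint in $5H_i$ and $r(x)=|J(x)|\le5|H_i|$; thus $x\in5H_i+t$ for some $t=r(x)a$ in the dyadic set $\{2^m a:2^m\le5|H_i|\}$. The translates $5H_i+t$ with $0<t\le5|H_i|$ all meet the fixed interval $5H_i+[0,5|H_i|]$, so their union has measure $\le10|H_i|$; and the dyadic values $t=2^m a$ in $(5|H_i|,\,5a|H_i|]$ number at most $\log_2 a+O(1)=O(N)$, and since they are spaced by factors of $2$ and each exceeds $|5H_i|$, the corresponding translates $5H_i+t$ are pairwise disjoint, contributing measure $O(N|H_i|)$. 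Hence $|F_i|=O(N|H_i|)$, and since $E_\lambda=\bigcup_i F_i$ we obtain $|E_\lambda|\le\sum_i|F_i|\lesssim N\sum_i|H_i|\lesssim \tfrac{N}{\lambda}\|f\|_1$, the required weak-type bound.

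The step I expect to be the main obstacle is precisely this estimate for $|F_i|$. The naive inclusion $F_i\subseteq5H_i+[0,5a|H_i|]$ only gives $|F_i|\lesssim a|H_i|\asymp2^N|H_i|$, which upon summation in $i$ would cost a factor $2^N$ rather than $N$ --- and one checks that the weak-type constant really is of size $N$, so some genuine argument is needed here. The logarithmic saving comes from organizing the admissible offsets $r(x)a$ into the $O(N)$ ``genuinely separated'' dyadic scales together with a tail of small offsets whose translates overlap harmlessly; care must be taken that this dichotomy, and the disjointness of the separated translates, hold uniformly in $i$. Everything else --- the reduction to $f\ge0$, the easy regime $|z|\lesssim|I|$, the Vitali extraction, and the Marcinkiewicz interpolation --- is routine.
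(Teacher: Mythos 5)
Your proof is correct. Note that the paper does not actually write out a proof of this lemma: it simply records it as a known variant of the Hardy--Littlewood maximal theorem and remarks that it is ``a simple consequence of the Calder\'on--Zygmund decomposition.'' Your route is different in packaging --- you run a Vitali $5r$-covering argument directly on the level set $\{\scriptn_I f>\lambda\}$ and then count, for each selected interval $H_i$, the admissible dyadic offsets $2^m a$, splitting them into the small offsets $2^m a\le 5|H_i|$ (whose translates of $5H_i$ fit inside one interval of length $10|H_i|$) and the $O(\log(|z|/|I|))$ remaining dyadic scales --- whereas the hinted Calder\'on--Zygmund route would decompose $f=g+b$ and, for each bad interval $Q$, count the $O(\log(|z|/|I|))$ dyadic radii $r$ between $|Q|/|z|$ and $|Q|$ for which $x-rI$ can meet $Q$ without the cancellation of $\int b_Q=0$ being available. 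Both arguments hinge on exactly the same geometric point (the translated averaging interval $x-rI$ sits at distance $\approx r|z|$ from $x$, so only logarithmically many lacunary scales cause genuine spreading), so your proof is a clean and complete substitute; if anything it is more self-contained, at the cost of not generalizing as readily to situations where one wants the full CZ machinery anyway. Two cosmetic points: the identity $\scriptn_{cI}=\scriptn_I$ is literally true only for $c\in 2^\integers$; for general $c$ you should conjugate by the dilation $f\mapsto f(c\,\cdot)$ (which preserves the weak $(1,1)$ and $L^p$ operator norms and the ratio $|z||I|^{-1}$), so the normalization $|I|=1$ is still legitimate. Also, the pairwise disjointness of the large-offset translates $5H_i+2^m a$ is not needed --- counting $O(\log(2+|z|))$ translates each of measure $5|H_i|$ already gives $|F_i|=O(|H_i|\log(2+|z|))$ --- though the disjointness claim you make is true. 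The Marcinkiewicz interpolation step with the trivial $L^\infty$ bound correctly yields the $O\big((\log(2+|z||I|^{-1}))^{1/p}\big)$ operator norm, with constants depending on $p$ as the lemma permits.
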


This is for instance a simple consequence of the Calder\'on-Zygmund decomposition.
\qed

\begin{lemma}
The maximal operator $N(f_1,f_2) = \sup_{k\in\integers} |A_k(P_kf_1,f_2)|$
maps $L^{p_1}\times L^{p_2}$ boundedly to $L^{q(p_1,p_2)}$
whenever $p_1,p_2\in(1,\infty]$.
\end{lemma}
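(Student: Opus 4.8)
The plan is to dominate $N$ pointwise by a product of two one-variable maximal operators --- one applied to $f_1$, one to $f_2$ --- and then to conclude with H\"older's inequality, the Hardy--Littlewood maximal theorem, and Lemma~\ref{lemma:logloss}.

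First I would peel off the factor $f_1$. Writing $P_kf_1 = f_1*\phi_k$ with $\phi_k(y) = 2^k\phi(2^ky)$ and recalling $\phi\in\scripts(\reals^1)$, the kernel obeys $|\phi_k(y)|\le C_N 2^k(1+2^k|y|)^{-N}$ for every $N$. Since $\eta$ is compactly supported, $|\gamma_1(t)|\le C_0$ on $\operatorname{supp}(\eta)$, so $x+2^{-k}\gamma_1(t)$ lies within $C_0 2^{-k}$ of $x$; a routine estimate (the bounded displacement of the center is absorbed into the Schwartz tails of $\phi_k$) gives $\bigl|(P_kf_1)(x+2^{-k}\gamma_1(t))\bigr|\le C\,\HLM f_1(x)$ uniformly in $k\in\integers$, $x\in\reals^1$, and $t\in\operatorname{supp}(\eta)$. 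Substituting this into $A_k$ and using $\eta\ge 0$ yields the pointwise bound
\[ N(f_1,f_2)(x)\le C\,\HLM f_1(x)\cdot\widetilde N(f_2)(x),\qquad \widetilde N(g)(x):=\sup_{k\in\integers}\int |g(x+2^{-k}\gamma_2(t))|\,\eta(t)\,dt. \]
Because $q=q(p_1,p_2)\le\min(p_1,p_2)$, H\"older's inequality gives $\norm{N(f_1,f_2)}_{L^q}\le C\,\norm{\HLM f_1}_{L^{p_1}}\norm{\widetilde N(f_2)}_{L^{p_2}}$, and $\HLM$ is bounded on $L^{p_1}$ since $p_1>1$. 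Hence everything reduces to the boundedness of $\widetilde N$ on $L^{p_2}(\reals^1)$ for every $p_2\in(1,\infty]$; the case $p_2=\infty$ is immediate, so I would take $p_2<\infty$.

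To bound $\widetilde N$, I would insert a finite partition of unity $\eta=\sum_\nu\eta_\nu$ subordinate to $\operatorname{supp}(\eta)$ and estimate each piece $\widetilde N_\nu$ separately. Hypothesis~1 forces $\gamma_2$ to be nonconstant, so by real analyticity $\gamma_2'$ has only finitely many zeros in $\operatorname{supp}(\eta)$; I would refine the partition so that on each piece $\gamma_2'$ either never vanishes or vanishes only at one interior point. On a piece where $\gamma_2'\ne 0$, the substitution $s=\gamma_2(t)$ turns $\widetilde N_\nu(g)(x)$ into $\le C\sup_{r\in 2^{\integers}}\int|g(x+rs)|\,\tilde\eta(s)\,ds$ with $\tilde\eta$ bounded and supported in a fixed interval, which is dominated by an operator of the form $\scriptn_I$ and hence bounded on $L^{p_2}$ by Lemma~\ref{lemma:logloss}. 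Near a zero $t_0$ of $\gamma_2'$ of order $m-1$ one has $\gamma_2(t)-\gamma_2(t_0)\asymp (t-t_0)^m$, so the push-forward of $\eta_\nu(t)\,dt$ under $\gamma_2$ has density $\le C|s-c|^{1/m-1}$ with $c=\gamma_2(t_0)$. Decomposing this density dyadically in $|s-c|$ writes it as $\sum_{\ell\ge 0}w_\ell$ with $w_\ell\le c_\ell\,|I_\ell|^{-1}\mathbf 1_{I_\ell}$, where $I_\ell$ is an interval of length $\sim 2^{-\ell}$ centered at $c$ and $c_\ell\le C2^{-\ell/m}$. Thus $\widetilde N_\nu(g)\le C\sum_\ell c_\ell\,\scriptn_{I_\ell}(g)$ (after reflecting the intervals), and Lemma~\ref{lemma:logloss} gives $\norm{\scriptn_{I_\ell}(g)}_{L^{p_2}}\le C(1+\ell)^{1/p_2}\norm{g}_{L^{p_2}}$, with no logarithmic loss when $c=0$. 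Since $\sum_\ell 2^{-\ell/m}(1+\ell)^{1/p_2}<\infty$, this yields $\norm{\widetilde N_\nu(g)}_{L^{p_2}}\le C\norm{g}_{L^{p_2}}$, and summing over the finitely many $\nu$ finishes the argument.

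The step I expect to be the crux is the handling of the zeros of $\gamma_2'$. A cruder approach that replaces the push-forward density by an $L^\tau$ bound only dominates $\widetilde N(g)$ by $\HLM_\tau g$ with $\tau$ as large as the maximal vanishing order of $\gamma_2'$, and so would miss the range $1<p_2\le\tau$; it is precisely the summability of the mild logarithmic losses supplied by Lemma~\ref{lemma:logloss} against the geometric gains $2^{-\ell/m}$ of the dyadic decomposition that recovers the full range $p_2>1$ asserted in the lemma.
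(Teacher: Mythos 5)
Your argument is correct and is essentially the paper's own: the paper likewise dominates the $P_kf_1$ factor by the Hardy--Littlewood maximal function via scale invariance, reduces by real analyticity and compactness of $\operatorname{supp}(\eta)$ to an isolated zero of $\gamma_2'$, and then treats the resulting integrable power singularity $|y_2-u|^{-\tau}$, $\tau<1$, by exactly the dyadic decomposition into shifted intervals handled by Lemma~\ref{lemma:logloss}, whose logarithmic losses sum against the geometric gains. You have merely written out in detail what the paper compresses into ``the conclusion follows from Lemma~\ref{lemma:logloss} via a simple decomposition.''
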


\begin{proof}
Let $\HLM$ denote the Hardy-Littlewood maximal function, acting on functions with domain $\reals^1$.
It is elementary that $|\scriptb(P_0f_1,f_2)|\le C\HLM(f_1)\cdot \HLM(f_2)$
if $\frac{d\gamma_2}{dt}$ never vanishes. 
Therefore $\sup_k |A_k(P_kf_1,f_2)| \le C\HLM(f_1)\cdot\HLM(f_2)$ as well,
by scale invariance.

Since $\gamma\in C^\omega$,
since the support of $\eta$ is compact, and since the range of $\gamma$
is not contained in any line in $\reals^2$,
the general case reduces to the subcase in which there is a single 
$t$ at which $\frac{d\gamma_2}{dt}(t)$ vanishes.
Then
\[\scriptb(P_0f_1,f_2)(x) = \int K(x-y_1,x-y_2)\,f_1(y_1)\,f_2(y_2)\,d\by\]
where $K$ has compact support and 
\[ |K(\by)| \le C|y_2-u|^{-\tau}\]
for some $u\in\reals^1$ and $\tau <1$.

The conclusion follows from Lemma~\ref{lemma:logloss} via a simple decomposition.
\end{proof}

The same result applies to the maximal operators $\sup_k |A_k(f_1,P_kf_2)|$
and (more simply) $\sup_k |A_k(P_kf_1,P_kf_2)|$.
The same reasoning also shows that $\calM$
maps $L^{p_1}\times L^{p_2}$ to $L^{q(p_1,p_2)}$
whenever $p_1=\infty$ or $p_2=\infty$.
Therefore in order to prove our main result, it suffices to 
prove that
for each $(p_1,p_2)\in(1,\infty)^2$ 
there exists $\delta = \delta(p_1,p_2)>0$ such that
for every $\bn\in\naturals^2$,
$M_\bn$ maps $L^{p_1}\times L^{p_2}$ to
$L^{q(p_1,p_2)}$ with bound $O(2^{-\delta|\bn|})$.

This will be a consequence of two inequalities, which will be proved below:
\begin{lemma} \label{lemma:Ltwo}
There exist $\delta>0$ and $C<\infty$
such that for all $f_1,f_2\in C^0\cap L^2(\reals^1)$,
\begin{equation}
	\norm{S_\bn(f_1,f_2)}_1 \le C2^{-\delta|\bn|}\prod_{j=1}^2\norm{f_m}_2.
\end{equation}
\end{lemma}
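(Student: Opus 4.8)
The plan is to estimate $\norm{S_\bn(f_1,f_2)}_1 = \sum_k \norm{T_{\bn,k}(f_1,f_2)}_1$ by first reducing to a single term via a localization and almost-orthogonality argument, then extracting the gain $2^{-\delta|\bn|}$ from the trilinear smoothing inequality of Theorem~\ref{thm:trilinearsmoothing}. The starting observation is that $\norm{T_{\bn,k}(f_1,f_2)}_1$ can be written by duality as $\sup$ over $f_0$ with $\norm{f_0}_\infty\le 1$ of $\int f_0(x)\,A_k(Q_{k+n_1}f_1,Q_{k+n_2}f_2)(x)\,dx$, up to adjusting $f_0$ by the unimodular factor that realizes the absolute value. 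By scale invariance (replacing $f_j$ by $f_j(2^{-k}\cdot)$ and $\eta$ being fixed) it suffices to treat $k=0$, obtaining a trilinear expression $\scriptt_0(f_0,Q_{n_1}f_1,Q_{n_2}f_2)$ of the form \eqref{trilinearform} with $\varphi_0(x,t)=x$, $\varphi_j(x,t)=x+\gamma_j(t)$; here I use that $A_0$ restricted to the diagonal together with the extra integration against $f_0$ is exactly such a form.

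Next I would verify that the hypotheses of Theorem~\ref{thm:trilinearsmoothing} are met. The determinant condition $\det(\nabla\varphi_i,\nabla\varphi_j)\not\equiv 0$ reduces, by direct computation, to $\gamma_1'\not\equiv 0$, $\gamma_2'\not\equiv 0$, and $\gamma_1'-\gamma_2'=J\not\equiv 0$; these follow from Hypotheses 1 and 3 together with the remark that $\gamma_1,\gamma_2,\gamma_1-\gamma_2$ are all nonconstant. The nondegeneracy of solutions to $\sum_j g_j\circ\varphi_j\equiv 0$ is exactly condition (a) of Lemma~\ref{lemma:curvaturehypothesis} with $d=2$; Hypothesis 2 and Hypothesis 1 rule out (b)(ii) and (b)(i) respectively, so (a) fails, which is what Theorem~\ref{thm:trilinearsmoothing} requires. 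Thus there are $p<\infty$, $\sigma<0$, $C<\infty$ with $|\scriptt_0(f_0,g_1,g_2)|\le C\prod\norm{g_j}_{W^{p,\sigma}}$, where $g_0=f_0$, $g_1=Q_{n_1}f_1$, $g_2=Q_{n_2}f_2$. Since $Q_{n_j}f_j$ has Fourier support in an annulus of size $\asymp 2^{n_j}$, the negative-smoothness norm gives $\norm{Q_{n_j}f_j}_{W^{p,\sigma}}\lesssim 2^{\sigma n_j}\norm{Q_{n_j}f_j}_{L^p}\lesssim 2^{\sigma n_j}\norm{f_j}_{L^p}$ (Bernstein, and $Q_{n_j}$ bounded on $L^p$). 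The factor $\norm{f_0}_{W^{p,\sigma}}$ is harmless since $f_0$ is bounded with support we may take localized to a fixed-size set (the support of $\eta$ forces $A_0(g_1,g_2)$ to be supported in a bounded interval), giving $\norm{f_0}_{W^{p,\sigma}}\lesssim 1$. This yields $\norm{T_{\bn,0}(f_1,f_2)}_1\lesssim 2^{-\delta'(n_1+n_2)}\norm{f_1}_{L^p}\norm{f_2}_{L^p}$ with $\delta'=-\sigma>0$.

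The remaining issue — and the main obstacle — is summing over $k\in\integers$: the bound above, applied scale-by-scale, gives for each $k$ a constant times $\norm{f_1}_{L^p}\norm{f_2}_{L^p}$, which does not sum. The fix is that for fixed $\bn$, the operators $T_{\bn,k}$ are almost orthogonal in a suitable sense once we also use the $L^2$ hypothesis. Concretely, I would localize in physical space: partition $\reals$ into intervals of length $\asymp 2^{-k}$ adapted to scale $k$, note that $A_k(g_1,g_2)(x)$ at scale $k$ depends only on $g_j$ near $x$ at scale $2^{-k}$, and deduce a bound $\norm{T_{\bn,k}(f_1,f_2)}_{L^1}\lesssim 2^{-\delta'|\bn|}\sum_{\text{intervals }I\text{ of length }2^{-k}}\prod_j\norm{Q_{k+n_j}f_j}_{L^p(I^*)}$. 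Then interpolating the single-scale $L^p\times L^p\to L^1$ bound with the trivial $L^2\times L^2\to L^1$ bound coming from Lemma~\ref{lemma:cauchy-schwarz}-type Cauchy–Schwarz reasoning (which is scale-invariant and gives $\norm{T_{\bn,k}(f_1,f_2)}_1\lesssim\norm{Q_{k+n_1}f_1}_2\norm{Q_{k+n_2}f_2}_2$ with no loss), I obtain for some $r$ strictly between $p$ and $2$ a bound $\norm{T_{\bn,k}(f_1,f_2)}_1\lesssim 2^{-\delta'' |\bn|}\norm{Q_{k+n_1}f_1}_{2}\norm{Q_{k+n_2}f_2}_{2}$, and now Cauchy–Schwarz in $k$ together with the Littlewood–Paley square-function bound $\sum_k\norm{Q_{k+n_j}f_j}_2^2\lesssim\norm{f_j}_2^2$ (the shift by $n_j$ being irrelevant) closes the sum. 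Writing $\delta$ for a small multiple of $\delta''$ gives $\norm{S_\bn(f_1,f_2)}_1\le C2^{-\delta|\bn|}\norm{f_1}_2\norm{f_2}_2$, as claimed. The delicate point to get right is the interpolation between the $L^p$-valued smoothing bound and the $L^2$ Cauchy–Schwarz bound while preserving a positive power of $2^{-|\bn|}$; this works because the smoothing bound has an $|\bn|$-gain of a fixed size $\delta'$ independent of which $L^r$ one lands in after interpolation, so any interpolation parameter bounded away from the $L^2$ endpoint retains a gain.
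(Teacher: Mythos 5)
Your overall architecture is the same as the paper's: dualize with $f_0\in L^\infty$, verify the hypotheses of Theorem~\ref{thm:trilinearsmoothing} through Lemma~\ref{lemma:curvaturehypothesis} (a verification the paper only asserts, and which you carry out correctly), rescale to transfer the single-scale estimate to scale $k$, and sum over $k$ by retaining frequency-localized $L^2$ norms and using Cauchy--Schwarz together with $\sum_k\norm{Q_{k+n_j}f_j}_2^2\lesssim\norm{f_j}_2^2$. The paper implements this by first converting the $W^{p,\sigma}$ bound of Theorem~\ref{thm:trilinearsmoothing} into an $L^2$-based bound $\norm{\scriptb(f_1,f_2)}_1\le C\prod_j\norm{f_j}_{W^{2,s}}$ (``by a simple localization argument''), and then inserting $Q_i=\tilde Q_i\circ Q_i$ so that the single-scale estimate reads $\norm{T_{\bn,k}(f_1,f_2)}_1\lesssim 2^{-\delta|\bn|}\prod_j\norm{\tilde Q_{k+n_j}f_j}_2$, after which the square-function summation is exactly as you describe.

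The genuine gap in your proposal is the step that produces your key single-scale estimate $\norm{T_{\bn,k}(f_1,f_2)}_1\lesssim 2^{-\delta''|\bn|}\norm{Q_{k+n_1}f_1}_2\norm{Q_{k+n_2}f_2}_2$. Interpolating the $L^p\times L^p\to L^1$ smoothing bound (constant $2^{-\delta'|\bn|}$) with the lossless $L^2\times L^2\to L^1$ Cauchy--Schwarz bound gives an $L^r\times L^r\to L^1$ bound for $r$ strictly between $2$ and $p$ with constant $\asymp 2^{-\theta\delta'|\bn|}$, where $\tfrac1r=\tfrac\theta p+\tfrac{1-\theta}2$; it does not give an $L^2$-based bound, and the gain is not, as you assert at the end, independent of the interpolation parameter --- it degrades linearly in $\theta$ and vanishes at the $L^2$ endpoint. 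If (with $p>2$, which Theorem~\ref{thm:trilinearsmoothing} permits) you then try to pass from the localized $L^r$ norms of $Q_{k+n_j}f_j$ back to $L^2$ norms, Bernstein at frequency $2^{k+n_j}$ on spatial blocks of length $2^{-k}$ costs a factor $2^{n_j(\frac12-\frac1r)}=2^{n_j\theta(\frac12-\frac1p)}$ per function, so the net exponent per unit of $|\bn|$ is $\theta\big(\tfrac12-\tfrac1p-\delta'\big)$: both gain and loss scale with $\theta$, so no choice of interpolation parameter helps, and the sign is unfavorable whenever $|\sigma|\le\tfrac12-\tfrac1p$, which the statement of Theorem~\ref{thm:trilinearsmoothing} does not rule out. (Keeping the $L^r$ norms instead does not save the summation either, since $\sum_k\norm{Q_{k+n_j}f_j}_r^2\not\lesssim\norm{f_j}_r^2$ for $r>2$, and the hypothesis only gives $f_j\in L^2$.) What is needed, and what the paper supplies as its intermediate step, is an $L^2$-Sobolev ($W^{2,s}$, $s<0$) form of the single-scale inequality; your argument silently assumes this conversion is routine, and that is precisely where it is not. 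A secondary slip: $A_0(g_1,g_2)$ is not compactly supported in $x$ (only the $t$-integration is localized), so the remark that $\norm{f_0}_{W^{p,\sigma}}\lesssim 1$ because the output lives on a bounded interval needs the spatial partition you introduce only later.
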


\begin{lemma} \label{lemma:Lone}
There exists $C<\infty$ such that for every $\bn\in\naturals^2$
and $f_1,f_2\in L^1(\reals^1)$,
\begin{equation}
	\norm{M_\bn(f_1,f_2)}_{L^{1/2,\infty}} \le C(1+|\bn|^2)\prod_{j=1}^2\norm{f_j}_1.
\end{equation}
\end{lemma}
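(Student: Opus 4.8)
The plan is to prove the weak‑type bound level by level, by a Calder\'on--Zygmund decomposition. Since $M_\bn$ is positively homogeneous of degree one in each of its two arguments, it suffices to fix $\lambda>0$, assume $f_1,f_2\ge 0$ with $\norm{f_1}_1=\norm{f_2}_1=1$, and prove
\[ \big|\{x: M_\bn(f_1,f_2)(x)>\lambda\}\big|\le C(1+|\bn|)\lambda^{-1/2}; \]
taking $\sup_{\lambda}\lambda(\cdot)^2$ of both sides then yields $\norm{M_\bn(f_1,f_2)}_{L^{1/2,\infty}}\le C'(1+|\bn|^2)$. First I would apply the Calder\'on--Zygmund decomposition to each $f_j$ at height $c\lambda^{1/2}$: $f_j=g_j+b_j$ with $b_j=\sum_I b_{j,I}$, where the $I$ range over pairwise disjoint intervals with $\sum_I|I|\le\lambda^{-1/2}$, where $0\le g_j$ satisfies $\norm{g_j}_\infty\lesssim\lambda^{1/2}$ and $\norm{g_j}_1\le 1$, and where $\operatorname{supp}b_{j,I}\subseteq I$, $\int b_{j,I}=0$, $\norm{b_{j,I}}_1\lesssim\lambda^{1/2}|I|$. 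Because each $A_k$ is bilinear and each $Q_m$ linear, $M_\bn$ is subadditive in each slot, so
\[ M_\bn(f_1,f_2)\le M_\bn(g_1,g_2)+M_\bn(g_1,b_2)+M_\bn(b_1,g_2)+M_\bn(b_1,b_2). \]

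For the good–good term I would use $M_\bn\le S_\bn$ pointwise together with Lemma~\ref{lemma:Ltwo}. Here $g_j\in L^2$ with $\norm{g_j}_2^2\le\norm{g_j}_\infty\norm{g_j}_1\lesssim\lambda^{1/2}$, so (after a routine approximation, since $g_j\in L^2$) $\norm{M_\bn(g_1,g_2)}_1\le\norm{S_\bn(g_1,g_2)}_1\le C2^{-\delta|\bn|}\norm{g_1}_2\norm{g_2}_2\lesssim 2^{-\delta|\bn|}\lambda^{1/2}$, and Chebyshev gives $|\{M_\bn(g_1,g_2)>\lambda/4\}|\lesssim 2^{-\delta|\bn|}\lambda^{-1/2}$. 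This term costs nothing in $|\bn|$; in fact it gains.

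It remains to dispose of the three terms containing a bad factor. Let $\Omega^{\ast}=\bigcup_{j=1,2}\bigcup_I C(1+|\bn|)\,I$, whose measure is $\lesssim (1+|\bn|)\lambda^{-1/2}$; I would show that $M_\bn(g_1,b_2)$, $M_\bn(b_1,g_2)$ and $M_\bn(b_1,b_2)$ are each $\le\lambda/4$ at every $x\notin\Omega^{\ast}$, after which the four pieces combine to give the claimed bound. The mechanism is standard: writing $Q_mf=f\ast\psi_m$ with $\psi_m$ an $L^1$‑normalized dilate, at scale $2^{-m}$, of a fixed Schwartz function of integral zero, the vanishing mean of $b_{j,I}$ together with the smoothness of $\psi_{k+n_j}$ shows $Q_{k+n_j}b_{j,I}$ is concentrated in the $2^{-(k+n_j)}$‑neighborhood of $I$ with rapidly decaying tails; since the integral defining $A_k$ only sees its arguments within distance $\lesssim 2^{-k}$ of $x$, the quantity $A_k(\dots b_{j,I}\dots)(x)$ is negligible unless $\operatorname{dist}(x,I)\lesssim 2^{-k}$, which for $x\notin\Omega^{\ast}$ forces $2^{-k}\gtrsim (1+|\bn|)|I|$. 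In that range I would bound $A_k(\dots b_{j,I}\dots)(x)$ by replacing one, respectively both, of the factors involving an $I$ (for the bad–good, respectively bad–bad, term) by its first‑order Taylor remainder at the center of $I$, estimate the remaining factor crudely via $\norm{g_2}_\infty\lesssim\lambda^{1/2}$ (or the analogous bound for the other bad factor), and carry out the $t$‑integration by a change of variables — using that $d\gamma/dt$ vanishes nowhere, and invoking the single‑scale analysis underlying Lemma~\ref{lemma:mostbasic} near the isolated zeros of the individual $\gamma_j'$. Sorting the contributing scales into the regimes $2^{-k}\gg|I|$, $2^{-k}\sim(1+|\bn|)|I|$, and $2^{-(k+n_j)}\gtrsim|I|$, in each regime the resulting geometric‑type series in $k$ sums to $\lesssim\lambda$, the factor $1+|\bn|$ built into $\Omega^{\ast}$ providing exactly the slack needed to absorb the Schwartz tails and, for the bad–bad term, the double Taylor expansion. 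Summing, $|\{M_\bn(f_1,f_2)>\lambda\}|\le|\Omega^{\ast}|+|\{M_\bn(g_1,g_2)>\lambda/4\}|\lesssim (1+|\bn|)\lambda^{-1/2}$.

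The hard part is the bad–bad term together with the scale bookkeeping in general: there is no a priori bound on the number of scales $k$, and the estimate available for $A_k(\dots b_{j,I}\dots)$ changes with the ordering of the three lengths $|I|$, $2^{-k}$, $2^{-(k+n_j)}$, so the argument must be arranged so that cancellation is exploited at the correct scale in each regime and so that the resulting series converge. This is precisely where the polynomial loss $(1+|\bn|^2)$ — rather than a clean $O(1)$ bound — enters.
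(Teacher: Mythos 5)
Your overall frame (normalize, Calder\'on--Zygmund at height $\sim\lambda^{1/2}$, split into good/bad interactions, exceptional set of dilated intervals, Chebyshev) matches the paper, but the central claim you rely on for the bad terms is false, and this is a genuine gap. You assert that $M_\bn(b_1,b_2)$, $M_\bn(b_1,g_2)$, $M_\bn(g_1,b_2)$ are pointwise $\le\lambda/4$ at every $x\notin\Omega^*$, where $\Omega^*$ is the union of the CZ intervals dilated by $C(1+|\bn|)$. The obstruction is the mismatch between the frequency scale $2^{-(k+n_j)}$ of $Q_{k+n_j}$ and the length of a CZ interval: a bad piece $b_{j,I}$ may have all of its mass $\sim\lambda^{1/2}|I|$ concentrated on a sub-interval of length $2^{-(k+n_j)}\ll|I|$, and then $|Q_{k+n_j}b_{j,I}|\sim 2^{k+n_j}\lambda^{1/2}|I|$ near that spike; the mean-zero cancellation of $b_{j,I}$ gives nothing in this regime because it acts at scale $|I|$, not at the spike scale. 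Taking $|I_1|=|I_2|\sim 2^{-k}/(1+|\bn|)$ (so that there are points $x\notin\Omega^*$ from which both spikes are reachable, i.e.\ $x+2^{-k}\gamma_j(t^*)$ hits spike $j$ for some $t^*$ in the support of $\eta$), the $t$-set where both arguments sit on the spikes has measure $\sim 2^{-\max(n_1,n_2)}$, and one gets $|A_k(Q_{k+n_1}b_1,Q_{k+n_2}b_2)(x)|\gtrsim 2^{\min(n_1,n_2)}\,\lambda/(1+|\bn|)^2$, which is exponentially larger than $\lambda$ for $x$ outside $\Omega^*$. A polynomially dilated exceptional set therefore cannot provide the needed slack; no choice of the dilation factor or of the CZ constant $c_0$ repairs a loss of size $2^{\min(n_1,n_2)}$. (The set of such $x$ is tiny, which is precisely why a pointwise bound is the wrong target.)

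The paper avoids this by never claiming pointwise smallness: it uses a fixed (factor $4$) dilate $\scripte$ of the CZ intervals and proves integral bounds off it, namely $\int_{\reals\setminus\scripte}M_\bn(h_1,h_2)^{1/2}\lesssim|\bn|^2$ and $\int_{\reals\setminus\scripte'}M_\bn(h_1,g_2)\lesssim\alpha^{1/2}$, followed by Chebyshev with the exponent $1/2$ (resp.\ $1$). The $L^{1/2}$ quasi-norm tolerates the large values on small sets produced by the spike scenario above. Concretely, the paper organizes the bad pieces by interval size $2^{-i_j}$, bounds each $\int_{\reals\setminus\scripte}|A_k(Q_{k+n_1}h_1^{i_1},Q_{k+n_2}h_2^{i_2})|^{1/2}$ by $\min_j\min(2^{i_j-k},2^{(k+n_j-i_j)/2},1)\prod_l\|h_l^{i_l}\|_1^{1/2}$ — using the single-scale $L^1\times L^1\to L^{1/2}$ bound of Lemma~\ref{lemma:mostbasic}, the moment condition (gain $2^{(k+n_j-i_j)/2}$), and Schwartz tails when $k>i_j$ — and the factor $|\bn|^2$ arises from summing this three-way minimum over $k,i_1,i_2$, not from enlarging the exceptional set. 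You would need to replace your pointwise step by an estimate of this integral type for your argument to close. Two minor points: your treatment of the good--good term via Lemma~\ref{lemma:Ltwo} is legitimate (not circular) but heavier than necessary — the paper simply uses $\|Q_{k+n_j}g_j\|_\infty\lesssim c_0\lambda^{1/2}$ to get a pointwise bound $<\lambda/4$; and your bad--good pointwise claim also degenerates near zeros of the individual $\gamma_j'$ (present in the circular example), which is another reason the paper runs that case, too, through an $L^1$ bound off the exceptional set.
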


\section{Proof of Lemma~\ref{lemma:Ltwo}}

For any cutoff function $\tilde\eta\in C^\infty_0(\reals^1)$,
the trilinear form
\begin{equation}
\scriptt(\bff) = \int f_0(x) \,\scriptb(f_1,f_2)(x)\,\tilde\eta(x)\,dx,
\end{equation}
acting on $\bff = (f_0,f_1,f_2)$,
satisfies the hypotheses of Theorem~\ref{thm:trilinearsmoothing}.
Therefore (by a simple localization argument) there exists $s<0$ such that
\begin{equation}
\norm{\scriptb(f_1,f_2)}_1\le C \prod_{j=1}^2\norm{f_j}_{W^{2,s}}
\end{equation}
for all $f_j\in L^2(\reals^1)$. 
Therefore
\begin{equation}
	\norm{\scriptb(Q_{n_1}f_1,Q_{n_2}f_2)}_1
	\le C 2^{-|s|\max(n_1,n_2)} \prod_{j=1}^2\norm{f_j}_2
	\le C 2^{-\delta|\bn|} \prod_{j=1}^2\norm{f_j}_2
\end{equation}
uniformly in $\bn$, with $\delta = |s|/2>0$.
By scaling, 
\begin{equation}
\norm{A_k(Q_{k+n_1}f_1,Q_{k+n_2}f_2)}_1
\le C 2^{-\delta|\bn|} \prod_{j=1}^2\norm{f_j}_2
\end{equation}
uniformly for all $k,\bn$.

By writing
$Q_i = \tilde Q_i\circ Q_i$  we obtain
\begin{align*}
	\norm{S_\bn(f_1,f_2)}_1
	& =  \norm{\sum_k T_{\bn,k}(f_1,f_2)}_1
	\\& \le  \sum_k \norm{T_{\bn,k}(f_1,f_2)}_1
	\\& =  \sum_k \norm{T_{\bn,k}(\tilde Q_{k+n_1}f_1,\tilde Q_{k+n_2}f_2)}_1
	\\& \le C2^{-\delta|\bn|} \sum_k \prod_{j=1}^2\norm{\tilde Q_{k+n_j}f_j}_2
	\\& \le C2^{-\delta|\bn|} \prod_{j=1}^2 (\sum_k \norm{\tilde Q_{k+n_j}f_j}_2^2)^{1/2}
	\\& \le C2^{-\delta|\bn|} \prod_{j=1}^2  \norm{f_j}_2^2.
\end{align*}

\section{Proof of Lemma~\ref{lemma:Lone}}

Lemma~\ref{lemma:Lone} follows from a straightforward consequence of a combination 
of multilinear Calder\'on-Zygmund theory with Lemma~\ref{lemma:mostbasic}.
Assume without loss of generality that $\norm{f_j}_1=1$.
Let $\alpha>0$. The inequality to be proved states that
\begin{equation} \label{distribution} |\{x: M_\bn(f_1,f_2)(x)>\alpha\}| 
\le C|\bn|^2 \alpha^{-1/2}.\end{equation}

Apply the Calder\'on-Zygmund decomposition to each $f_j$ at height $c_0\alpha^{1/2}$
for a certain small constant $c_0$.
Thus $f_j = g_j+h_j$ where $\norm{g_j}_\infty \le c_0\alpha$,
while
$h_j = \sum_\beta h_{j,\beta}\one_{I_{j,\beta}}$,
each $I_{j,\beta}\subset\reals^1$ is a dyadic interval of positive, finite length $|I_{j,\beta}|$,
$\norm{h_{j,\beta}}_1\le C \alpha|I_{j,\beta}|$,
and $\int h_{j,\beta}=0$.

\subsection{Contribution of $(h_1,h_2)$} \label {subsection:h1h2}
Let $I_{j,\beta_j}^*$ be the interval concentric with $I_{j,\beta_j}$ satisfying
$|I_{j,\beta_j}^*| = 4|I_{j,\beta_j}|$.
Define $\scripte\subset\reals^1$ by
\[ \scripte = (\bigcup_{\beta_1} I_{1,\beta_1}^*)
\cup (\bigcup_{\beta_2} I_{2,\beta_2}^*),\]
which satisfies 
\begin{equation}
|\scripte| = O(\alpha^{-1/2}).
\end{equation}
We will show that
\begin{equation}
\int_{\reals^1\setminus\scripte} M_\bn(h_1,h_2)^{1/2} 
\le C|\bn|^2.
\end{equation}
From this and the bound $|\scripte| = O(\alpha^{-1/2})$ it follows, by Chebyshev's inequality, that
$|\{x: M_\bn(h_1,h_2)(x)>\alpha/4\}|$ is majorized by $C|\bn|^2 \alpha^{-1/2}$.

Define
\[ h_j^i = \sum_{|I_{\beta_j}| = 2^{-i}} h_{j,\beta_j}.\]

Majorize
\begin{align*}
	\sup_{k\in\integers} |A_k(Q_{k+n_1}h_1,\,Q_{k+n_2}h_2)|^{1/2}
	&\le
	\sup_k \big(\sum_{i_1\in\integers} \sum_{i_2\in\integers} 
	|A_k(Q_{k+n_1}h_1^{i_1},\,Q_{k+n_2}h_2^{i_2})|\big)^{1/2}
\\	&\le \sup_k
	\sum_{i_1,i_2} |A_k(Q_{k+n_1}h_1^{i_1},\,Q_{k+n_2}h_2^{i_2})|^{1/2}
	\\&
	\le \sum_{i_1,i_2} \sum_k |A_k(Q_{k+n_1}h_1^{i_1},\,Q_{k+n_2}h_2^{i_2})|^{1/2}
\end{align*}
so that 
\[ 
% \norm{M_\bn(h_1,h_2)}_{L^{1/2}(\reals \setminus \scripte)}^{2}
\int_{\reals^1\setminus\scripte} M_\bn(h_1,h_2)^{1/2} 
\le
\sum_{i_1,i_2\in\integers}\, 
\sum_{k\in\integers}\,
\int_{\reals\setminus\scripte}
|A_k(Q_{k+n_1}h_1^{i_1},\,Q_{k+n_2}h_2^{i_2})|^{1/2}.\]

\begin{lemma}
Uniformly for all $k,\bn,i_1,i_2$,
\begin{equation}
	\int_{\reals^1\setminus\scripte} |A_k(Q_{k+n_1} h_1^{i_1},\,Q_{k+n_2} h_2^{i_2})|^{1/2}
	\lesssim
	\min_{j=1,2} \min\big(2^{i_j-k},\, 2^{(k+n_j-i_j)/2},\, 1\big)\,
	\prod_{l=1}^2 \norm{h_l^{i_l}}_1^{1/2}.
\end{equation}
\end{lemma}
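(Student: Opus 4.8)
The plan is to prove the four competing bounds
$2^{i_j-k}$, $2^{(k+n_j-i_j)/2}$, and $1$ (for each $j$) separately, and then combine.
The key structural observation is that $A_k(Q_{k+n_1}h_1^{i_1},Q_{k+n_2}h_2^{i_2})$
is built from the single-scale operator $\scriptb$ rescaled by $2^{-k}$, so after
undoing the scaling we may take $k=0$ and work with
$\scriptb(Q_{n_1}h_1^{i_1},Q_{n_2}h_2^{i_2})$, remembering to reinsert the Jacobian factor
$2^{k/q}$ (here $q=1/2$, so a factor $2^{2k}$ inside, which becomes $2^k$ after the
square root). Dilating the $I_{j,\beta_j}$ accordingly, the intervals in $h_j^{i_j}$ have
length $2^{i_j-k}$ after rescaling, and the set $\scripte$ scales with them; the point of
excising $\scripte$ is that on its complement the cancellation $\int h_{j,\beta_j}=0$ can
be exploited against the smoothing kernel.

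First I would prove the bound by $1$: this is just Lemma~\ref{lemma:mostbasic}
(the $L^1\times L^1\to L^{1/2}$ part), applied to $\scriptb$ after rescaling, using that
$Q_{k+n_j}$ is bounded on $L^1$ and $\norm{Q_{k+n_j}h_j^{i_j}}_1\lesssim\norm{h_j^{i_j}}_1$.
Second, the bound by $2^{(k+n_j-i_j)/2}$ exploits the Littlewood--Paley frequency
localization: the function $Q_{k+n_j}h_j^{i_j}$ has frequency $\sim 2^{k+n_j}$, so its
convolution against a bump of width $\sim 2^{-k}$ (the scale of $\scriptb$) is very small
unless the two scales interact; more precisely, writing $h_{j,\beta_j}=\int h_{j,\beta_j}$
with mean zero against the smooth kernel defining $Q_{k+n_j}$ (width $2^{-(k+n_j)}$) and
the kernel of $A_k$ (width $2^{-k}$) gains a factor $\lesssim 2^{i_j}2^{k+n_j}\cdot2^{-k}
\cdot\ (\text{something})$. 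The cleanest route is: $\norm{Q_{k+n_j}h_{j,\beta_j}}_1
\lesssim \min(1, 2^{(i_j-k-n_j)}\cdot 2^{?})\norm{h_{j,\beta_j}}_1$ via the mean-zero
property and the smoothness of the $Q$-kernel on the ball of radius $2^{i_j-k}$, after
which one sums in $\beta_j$ using disjointness of supports and invokes Lemma~\ref{lemma:mostbasic}.
Third, the bound by $2^{i_j-k}$ comes from the off-diagonal decay: on
$\reals^1\setminus\scripte$, $x$ is far (in units of $2^{i_j-k}$) from every
$I_{j,\beta_j}^*$, so the mean-zero cancellation of $h_{j,\beta_j}$ against the Hölder-$\tau$
kernel $K$ from Lemma~\ref{lemma:mostbasic}'s proof — or really against the smooth
envelope of $\scriptb(Q_{n_j}(\cdot),\cdot)$ — yields a convergent tail sum whose total
is $\lesssim 2^{i_j-k}\norm{h_j^{i_j}}_1$, because each term decays and there are
$O(2^{\text{dist}})$ intervals at a given distance.

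To combine, note the three quantities $2^{i_j-k}$, $2^{(k+n_j-i_j)/2}$, $1$: the first is
small when $i_j\ll k$, the second when $i_j\gg k+n_j$, and in the middle regime we just
use $1$; the geometric mean of the first two over an appropriate range is $\lesssim
2^{-c n_j}$ times a power summable in $i_j$, which is exactly what the parent sum over
$i_1,i_2,k$ in \S\ref{subsection:h1h2} needs to converge to $O(|\bn|^2)$ (the polynomial
loss $|\bn|^2$ absorbs the finitely many "transitional" values of $i_j$ near $k$ and
$k+n_j$ where we only have the trivial bound $1$). I expect the main obstacle to be the
second bound, $2^{(k+n_j-i_j)/2}$: one must carefully track how the frequency projection
$Q_{k+n_j}$ — which is \emph{not} a positive operator — interacts with the absolute value
and the $L^1\times L^1\to L^{1/2}$ structure of $\scriptb$; the fix is to write
$Q_{k+n_j}=\tilde Q_{k+n_j}Q_{k+n_j}$ as in \S3, put the "extra" $\tilde Q$ to work
producing the $2^{(i_j-k-n_j)/2}$ smallness via its rapidly decaying, mean-zero kernel
integrated against the $2^{i_j-k}$-scale bumps $h_{j,\beta_j}$, and then bound the
remaining $\scriptb(Q_{n_1}h_1^{i_1}, Q_{n_2}h_2^{i_2})$ crudely by Lemma~\ref{lemma:mostbasic}
after restoring $L^1$ control of the $Q$-projected pieces.
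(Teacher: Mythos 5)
Your plan is correct and is essentially the paper's own argument: the bound by $1$ is Lemma~\ref{lemma:mostbasic} combined with the uniform $L^1$-boundedness of $Q_{k+n_j}$; the bound $2^{(k+n_j-i_j)/2}$ comes from $\norm{Q_{k+n_j}h_j^{i_j}}_1\lesssim\min\big(1,2^{k+n_j-i_j}\big)\norm{h_j^{i_j}}_1$ (mean zero of each $h_{j,\beta_j}$ against the smooth $Q$-kernel on an interval of length $2^{-i_j}$) fed back into the same $L^1\times L^1\to L^{1/2}$ estimate; and the bound $2^{i_j-k}$ comes from the fact that for $x\notin\scripte$ and $k\ge i_j$ the points $x+2^{-k}\gamma_j(t)$ stay at distance $\gtrsim 2^{-i_j}$ from the supports, so only the rapidly decaying tail of $\psi_{k+n_j}$ contributes. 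One correction: in that last regime the mean-zero of $h_{j,\beta_j}$ gives no gain (the $Q$-kernel scale $2^{-(k+n_j)}$ is finer than the interval length $2^{-i_j}$), so discard that part of your third argument and rely solely on the separation-plus-tail-decay mechanism, which alone yields the even stronger factor $O(2^{-(k+n_j-i_j)N})$; also fix the flipped exponents in your intermediate formulas (the gain is $2^{k+n_j-i_j}$, the rescaled intervals have length $2^{k-i_j}$, and the separation is $\gtrsim 2^{-i_j}$), which your regime analysis in the combination step already gets right.
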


\begin{proof}[Outline of proof]
\begin{equation} \label{CZbound}
\int_{\reals^1\setminus\scripte} |A_k(Q_{k+n_1} h_1^{i_1},\,Q_{k+n_2} h_2^{i_2})|^{1/2}
	\le C\prod_{j=1}^2 \norm{Q_{k+n_j} h_j^{i_j}}_1^{1/2}
\end{equation}
by Lemma~\ref{lemma:mostbasic}. Since $Q_{l}$ is defined by convolution
with a function whose $L^1$ norm is finite and independent of $l$,
it follows that
\[\int_{\reals^1\setminus\scripte} |A_k(Q_{k+n_1} h_1^{i_1},\,Q_{k+n_2} h_2^{i_2})|^{1/2}
\le C \prod_{j=1}^2 \norm{h_j}_1.\]

The other two bounds are obtained by combining \eqref{CZbound} with alternative bounds for $\norm{Q_lh_j^{i}}_1$.
When $i_j> k+n_j$,
a factor $2^{(k+n_j-i_j)/2}$ arises from combining this inequality with the bound
$\norm{Q_{l}h_j^i}_1 \lesssim \min(1,2^{l-i})\norm{h_j^i}_1$. 
This follows from
\[ \norm{Q_{l}h_{j,\beta_j}}_1 \lesssim \min(1,2^{l-i})\norm{h_j^i}_1 
\text{ whenever $|\beta_j| = 2^{-i}$,}\]
which is a routine consequence of the moment condition $\int h_{j,\beta_j}=0$
and the fact that each $h_{j,\beta_j}$ is supported on an interval of length $2^{-i}$.

Next, consider
$A_k(Q_{k+n_1} h_1^{i_1},\,Q_{k+n_2} h_2^{i_2})(x)$
when $k>i_1$ and $x\notin\scripte$. This is a sum of terms
$A_k(Q_{k+n_1} h_{1,\beta_1},\,Q_{k+n_2} h_{2,\beta_2})(x)$
where $|I_{j,\beta_j}| = 2^{-i_j}$.
Recall that
$Q_l$ is defined by convolution with $\psi_l(y) = 2^l\psi(2^l y)$, for a certain Schwartz function $\psi$.

Since $x\notin\scripte$, $x$ lies at distance greater than $2\cdot 2^{-i_1}$ from  $I_{1,\beta_1}$. 
Therefore since $2^{-k}\le 2^{-i_1}$, the distance from $x+2^{-k}\cos(t)$ to $I_{1,\beta_1}$ is $\ge 2^{-i_1}$
for any $t\in[0,2\pi]$.
As a result,
\[ A_k(Q_{k+n_1} h_1^{i_1},\,Q_{k+n_2} h_2^{i_2})(x)
=  A_k(\tilde\psi*h_1^{i_1},\,Q_{k+n_2} h_2^{i_2})(x)\]
where $\tilde\psi(y) = \psi_{k+n_1}(y)\one_{|y|\ge 2^{-i_1}}$.
This convolution kernel $\tilde\psi$ satisfies
\[ \norm{\psi_{k+n_1}(y)\one_{|y|\ge 2^{-i_1}}}_1 = O(2^{-(k+n_1-i_1)N})\]
for every $N<\infty$ since $\psi(y) = O(1+|y|)^{-N}$.
Therefore
\begin{align*} \norm{\tilde\psi*h_1^{i_1}}_{L^1(\reals\setminus\scripte)}
&\le \sum_{|I_{1,\beta_1}| = 2^{-i_1}}
\norm{\tilde\psi*h_{1,\beta_1}}_{L^1(\reals\setminus\scripte)}
\\&
\le \sum_{|I_{1,\beta_1}| = 2^{-i_1}}
\norm{\tilde\psi}_1\norm{h_{1,\beta_1}}_1
\\&
\le O(2^{-(k+n_1-i_1)N}) \cdot \norm{h_1^{i_1}}.
\end{align*}
Interchanging the roles of the indices $j=1,2$
yields a bound
$O(2^{-(k+n_2-i_2)N}) \cdot \norm{h_2^{i_2}}$
when $k+n_2>i_2$.
\end{proof}

\begin{lemma}
	There exists $C<\infty$ such that for every $\bn\in \naturals^2$
	and every $h_l = \sum_{i_l\in\integers} h_l^{i_l}$ as above,
\begin{equation}
	\sum_{k\in\integers}
	\sum_{i_1\in\integers} \sum_{i_2\in\integers}
	\min_{l=1,2} \min\big(2^{i_l-k},\, 2^{(k+n_l-i_l)/2},\, 1\big)\,
	\prod_{j=1}^2 \norm{h_j^{i_j}}_1^{1/2}
%	& \lesssim |\bn|  \prod_{j=1}^2 \sum_{i_j} \norm{h_j^{i_j}}_1^{1/2}
	 \lesssim |\bn|^2  \prod_{j=1}^2 \norm{h_j}_1^{1/2}.
\end{equation}
\end{lemma}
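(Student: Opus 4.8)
The plan is to collapse the triple sum into a product of two elementary one-dimensional sums by shifting the summation indices. By homogeneity in each $h_j$ we may assume $\norm{h_1}_1=\norm{h_2}_1=1$; set $x_i=\norm{h_1^i}_1$ and $y_i=\norm{h_2^i}_1$. These are nonnegative, and since $h_j^i=\sum_{|I_{j,\beta}|=2^{-i}}h_{j,\beta}\one_{I_{j,\beta}}$ with the Calder\'on--Zygmund intervals $I_{j,\beta}$ pairwise disjoint, summing over $i$ gives $\sum_i x_i=\norm{h_1}_1=1$ and $\sum_i y_i=1$.

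For $l\in\{1,2\}$ and $m\in\integers$ write $\phi_l(m)=\min\big(2^{m},\,2^{(n_l-m)/2},\,1\big)$, so that the coefficient appearing in the lemma is $\min_{l=1,2}\min\big(2^{i_l-k},2^{(k+n_l-i_l)/2},1\big)=\min\big(\phi_1(i_1-k),\phi_2(i_2-k)\big)$. I would then substitute $a=i_1-k$, $b=i_2-k$, a bijection of $\integers^3$ onto itself (with $i_1=k+a$, $i_2=k+b$), rewriting the left-hand side as
\[ S:=\sum_{a,b\in\integers}\ \sum_{k\in\integers}\min\big(\phi_1(a),\phi_2(b)\big)\,x_{k+a}^{1/2}y_{k+b}^{1/2}. \]
Next, bound $\min(\phi_1(a),\phi_2(b))\le\phi_1(a)^{1/2}\phi_2(b)^{1/2}$ and, for $a,b$ fixed, sum over $k$ by the arithmetic--geometric mean inequality:
\[ \sum_k x_{k+a}^{1/2}y_{k+b}^{1/2}\le\tfrac12\sum_k\big(x_{k+a}+y_{k+b}\big)=\tfrac12(1+1)=1, \]
so that $S\le\big(\sum_a\phi_1(a)^{1/2}\big)\big(\sum_b\phi_2(b)^{1/2}\big)$. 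Finally I would evaluate $\sum_{m\in\integers}\phi_l(m)^{1/2}$ by splitting into the three ranges where each of the three entries of the minimum is active: $\phi_l(m)=2^m$ for $m\le-1$ (geometric tail, $O(1)$), $\phi_l(m)=1$ for $0\le m\le n_l$ (contributing $n_l+1$), and $\phi_l(m)=2^{(n_l-m)/2}$ for $m\ge n_l+1$ (geometric tail, $O(1)$). Hence $\sum_m\phi_l(m)^{1/2}\le C(1+n_l)$ and $S\le C(1+n_1)(1+n_2)\le C'(1+|\bn|^2)$; restoring the factor $\prod_{j=1}^2\norm{h_j}_1^{1/2}$ yields the stated bound.

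The step that requires the most care, and is really the whole point, is the order of summation: one must not sum over $i_1,i_2$ before $k$, because $\sum_m\phi_l(m)$ is bounded uniformly in $k$ and does not decay in $k$, so the leftover sum over $k\in\integers$ would diverge. Introducing the shift variables $a,b$ couples $k$ to $i_1,i_2$ so that the $k$-sum sees the full masses $\sum_i x_i=\sum_i y_i=1$ rather than an individual, slowly varying, factor $\phi_l$. Everything in sight is nonnegative, so Tonelli's theorem justifies the rearrangements; the remaining bookkeeping (which entry of each minimum is active on which range of $m$) is routine.
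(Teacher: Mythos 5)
Your argument is correct, and it reaches the same $O\big((1+|\bn|)^2\big)$ bound as the paper by a somewhat different, and arguably cleaner, organization of the summation. The paper first fixes $(i_1,i_2)$ and sums in $k$, proving the kernel estimate $\sum_{k}\min_{l}\min\big(2^{i_l-k},2^{(k+n_l-i_l)/2},1\big)\le C|\bn|\min\big(1,2^{-(|i_1-i_2|-|\bn|)/2}\big)$, and then runs a Schur-test/Cauchy--Schwarz argument in $(i_1,i_2)$ against this kernel, picking up a second factor $|\bn|$ from $\sum_{i_2}\min\big(1,2^{-(|i_1-i_2|-|\bn|)/2}\big)$. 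You instead decouple at the outset: after the shift $(a,b)=(i_1-k,i_2-k)$ you replace the minimum by the geometric mean $\phi_1(a)^{1/2}\phi_2(b)^{1/2}$, dispose of the $k$-sum by AM--GM (or Cauchy--Schwarz) against the total masses $\sum_i\|h_j^i\|_1=\|h_j\|_1$, and then the $(a,b)$-sum factors into two one-dimensional sums, each $O(1+n_l)$, giving $(1+n_1)(1+n_2)\lesssim 1+|\bn|^2$. Your route avoids the two-variable kernel bound entirely (the paper's kernel retains decay in $|i_1-i_2|$, which is simply not needed here), at the harmless cost of the lossy step $\min\le$ geometric mean; both normalizations and interchanges are justified since all terms are nonnegative and the estimate is homogeneous of degree $\tfrac12$ in each $h_j$. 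The only caveat, shared with the paper's own statement, is that the right-hand side should be read as $1+|\bn|^2$ (or $(1+|\bn|)^2$) to cover $\bn=(0,0)$, which is how the bound is used in Lemma~\ref{lemma:Lone}.
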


\begin{proof} 
\[ \sum_{k\in\integers}
\min_{l=1,2} \min\big(2^{i_l-k},\, 2^{(k+n_l-i_l)/2},\, 1\big)
%\le C_k \sum_k \min\big(1,\, 2^{-|i_1-i_2|+|\bn|},\, 2^{-|k-i_1|+|\bn|}\big)
\le C|\bn| \min(1,2^{-(|i_1-i_2|-|\bn|)/2})
\]
uniformly for all $(i_1,i_2)$.
Therefore by the Cauchy-Schwarz inequality,
\begin{align*}
	\sum_{i_1,i_2} \sum_{k\in\integers}
	\min_{l=1,2} 
	&\min\big(2^{i_l-k},\, 2^{(k+n_l-i_l)/2},\, 1\big)\,
	\prod_{j=1}^2 \norm{h_j^{i_j}}_1^{1/2}
	\\&
	 \le C|\bn|  
	\Big(	\sum_{i_1,i_2} \min(1,2^{-(|i_1-i_2|-|\bn|)/2}) \norm{h_1^{i_1}}_1 \Big)^{1/2}
	\Big(	\sum_{i_1,i_2} \min(1,2^{-(|i_1-i_2|-|\bn|)/2}) \norm{h_2^{i_2}}_1 \Big)^{1/2}
	\\&
	 \le C|\bn|^2
	\Big(	\sum_{i_1} \norm{h_1^{i_1}}_1 \Big)^{1/2}
	\Big(	\sum_{i_2} \norm{h_2^{i_2}}_1 \Big)^{1/2}
	\\&
	 = C|\bn|^2
	 \prod_{j=1}^2 \norm{h_j}_1^{1/2}.
\end{align*}
\end{proof}

\subsection{Contribution of $(h_1,g_2)$}

Redefine $\scripte'\subset\reals^1$ to be the union over $\beta_1$ of all
intervals $I_{1,\beta_1}^*$. This is a subset of the set $\scripte$
used above, so $|\scripte'|= O(\alpha^{-1/2})$. Note that $\norm{Q_{k+n_2}g_2}_\infty = O(\alpha^{1/2})$
uniformly in $k,n_2$. A simplified modification of the reasoning in \S\ref{subsection:h1h2},
using the simple inequality $\norm{\scriptb(f,1)}_1\le C\norm{f}_1$ 
and taking this bound $\norm{Q_{k+n_2}g_2}_\infty = O(\alpha^{1/2})$
into account, gives
\[ \int_{\reals\setminus\scripte'} |M_\bn(h_1,g_2)| \le C\alpha^{1/2} \norm{h_1}_1 \le C\alpha^{1/2}.\]
Therefore
\[ \big|\big\{x\in\reals\setminus\scripte': |M_\bn(h_1,g_2)|>\tfrac14\alpha\big\}\big|
\le C\alpha^{1/2}\alpha^{-1} = C\alpha^{-1/2}.\]
Since also $|\scripte'| = O(\alpha^{-1/2})$,
\[ \big|\big\{x\in\reals: |M_\bn(h_1,g_2)|>\tfrac14\alpha\big\}\big|
= O(\alpha^{-1/2}),\] as desired. 

The contribution of $(g_1,h_2)$ is handled in the same way, interchanging the roles of
the two indices. The contribution
of $(g_1,g_2)$ is $O((c_0\alpha^{1/2})^2)$ in $L^\infty$ norm,
hence is $<\tfrac14\alpha$ almost everywhere if $c_0$ is chosen to be sufficiently small.
That completes the proof of Lemma~\ref{lemma:Lone}.
\qed

\section{The full supremum} \label{section:full}

In this section we prove Theorem~\ref{thm:full}, concerning $M^{\text{full}}$ and $M^{\text{full}}_0$.
Despite the resemblance between $M^{\text{full}}$ and the linear spherical maximal function
\[ Mf(x) = \sup_{r\in(0,\infty)} \int_{S^{d-1}} |f(x+ry)|\,d\sigma(y),\]
where $f$ is defined on $\reals^d$ and $x\in\reals^d$,
inequalities for $M^{\text{full}}$ can be obtained 
in a straightforward way using the Hardy-Littlewood-maximal function in $\reals^1$ together with H\"older's inequality.

%%%%%%%%%%%%%%%%%%%%%%%%%%%%%%%%%%%%%%%%%%%%%%%%%%%%%%%%%%%

The restrictions on the exponents in
Theorem~\ref{thm:full} are necessary, except possibly for certain endpoint
cases which we will not discuss.
Firstly, necessity of the relation $q^{-1} = p_1^{-1}+p_2^{-1}$ is 
is an immediate consequence of a scaling symmetry. 
Secondly, it is necessary that $p_j\ge 1$ for both indices $j$.
Thirdly, for boundedness of $M_0^{\text{full}}$,
and hence for boundedness of the larger operator $M^{\text{full}}$,
it is necessary that $q(p_1,p_2)>1$. 

To see that the inequality cannot hold when $q(p_1,p_2)<1$,
let $\delta\in(0,1]$ be a small parameter, and
choose $f_j$ to be indicator functions of intervals of lengths $\delta$ centered at $0$. 
Let $q = q(p_1,p_2)$.  
Then $\sup_{1<r<2}\int_{S^1} f_1(x+r\cos(\theta))\,f_2(x+r\sin(\theta))\, d\theta$ 
is bounded below by $c \delta$ for all $x\in [\tfrac54,\tfrac74]$.  
Thus 
\[ \frac{ \|M^{\text{full}}_0(f_1,f_2)\|_q}{ \|f_1\|_{p_1}\|f_2\|_{p_2}}
\gtrsim \delta^{1-p_1^{-1}-p_2^{-1}} = \delta^{1-q^{-1}}.\] 
If $q<1$, this quantity tends to infinity as $\delta$ tends to zero.

\begin{proof}[Proof of Theorem~\ref{thm:full}]
Let $\HLM$ denote the Hardy-Littlewood maximal function for $\reals^1$.
For any $s>0$ define $\HLM_s(f)  = \HLM(|f|^s)^{1/s}$.

Let $\gamma(t) = (\gamma_1(t),\gamma_2(t)) = (\cos(t),\sin(t))$.
Assume that $k\tfrac\pi2$ does not belong to the support of the cutoff function $\eta$
fr any $k\in\{0,1,2,3,4\}$. Then
$d\gamma_j/dt$ does not vanish on the support of $\eta$, for $j=\in\{1,2\}$.
Consequently $|\scriptb(f_1,f_2)| \le CM(f_1)\norm{f_2}_{L^\infty}$,
and likewise with the roles of the indices reversed. Therefore
$M^{\text{full}}_0$ maps $L^1\times L^\infty$ and $L^\infty\times L^1$ to weak $L^1$.
Interpolating with the trivial $L^\infty\times L^\infty \to L^\infty$ bound, we 
conclude that $M^{\text{full}}_0$ maps $L^{p_1}\times L^{p_2}$ to $L^{q(p_1,p_2)}$
whenever $(p_1^{-1},p_2^{-1})\in F_0$.

To treat $M^{\text{full}}$, introduce intervals 
\[ I^k_n = \{\theta\in [0,2\pi]: |\theta-\tfrac{k\pi}{2}|\in[2^{-n},2^{-n+1}]\]
where $k\in\{1,2,3,4\}$ and $n\in\naturals$ is large.
We will analyze only $k=1$; the same reasoning will apply to $k=1,2,3,4$.
Write $I_n = I^0_n$. Define
\[\scriptm_n(f_1,f_2)(x) = \sup_{r>0} \int_{I_n} |f_1(x+r\cos(\theta)) f_2(x+r\sin(\theta)|\,d\theta.\]

\begin{lemma} \label{lemma:cauchy-schwarz}
\begin{equation} \label{lastlemma:conclusion}
\scriptm_n(f_1,f_2)(x) \le C \HLM_2f_1(x)\cdot \HLM_2f_2(x) \  \text{ uniformly in $n,x,f_1,f_2$.}
\end{equation}
For any $\tau>2$ there exists $C_\tau<\infty$ such that for all $f_1,f_2$,
\begin{equation} M^{\text{full}}(f_1,f_2)\le C_\tau \HLM_\tau(f_1)\cdot \HLM_\tau(f_2).  \end{equation}
\end{lemma}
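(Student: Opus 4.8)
The plan is to prove both displayed inequalities by the same Cauchy--Schwarz / H\"older device, exploiting that on the dyadic angular piece $I_n$ exactly one of the maps $\theta\mapsto\cos\theta$ and $\theta\mapsto\sin\theta$ has derivative of size $\asymp 2^{-n}$ (the degenerate direction) while the other has derivative $\asymp 1$. First I would dispose of the ``central'' part: for a fixed scale $r$, the contribution to $\int |f_1(x+r\cos\theta)\,f_2(x+r\sin\theta)|\,d\theta$ from $\theta$ bounded away from $\{0,\tfrac\pi2,\pi,\tfrac{3\pi}2\}$ is, by Cauchy--Schwarz in $\theta$ followed by the substitutions $v=r\cos\theta$ and $v=r\sin\theta$ (each bijective on a bounded number of subintervals there, with derivative bounded above and below), at most
\[ \Big(\tfrac{C}{r}\int_{|v|\le r}|f_1(x+v)|^2\,dv\Big)^{1/2}\Big(\tfrac{C}{r}\int_{|v|\le r}|f_2(x+v)|^2\,dv\Big)^{1/2}\le C\,\HLM_2f_1(x)\,\HLM_2f_2(x), \]
uniformly in $r$; and $\HLM_2\le\HLM_\tau$ pointwise for $\tau\ge2$. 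It therefore remains to estimate $\scriptm_n(f_1,f_2)$ and its three analogues centered at the other critical angles, absorbing the finitely many small values of $n$ into the central part.

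The heart of the matter is a pair of one--variable estimates, valid uniformly in $r>0$ and in $n$ large: for every $s\ge 1$,
\[ \int_{I_n}|f_1(x+r\cos\theta)|^{s}\,d\theta\ \lesssim\ 2^{n}\,\HLM(|f_1|^{s})(x) \qquad\text{and}\qquad \int_{I_n}|f_2(x+r\sin\theta)|^{s}\,d\theta\ \lesssim\ 2^{-n}\,\HLM(|f_2|^{s})(x). \]
For the first, substitute $u=\cos\theta$: on $I_n$ one has $|\sin\theta|\asymp 2^{-n}$, so $d\theta\asymp 2^{n}\,du$, while $u$ ranges over an interval of length $\asymp 2^{-2n}$ contained in $[1-C2^{-2n},1]$; hence $x+ru$ ranges over a subinterval of $[x,x+r]$, and the integral is $\lesssim 2^{n}r^{-1}\int_{x}^{x+r}|f_1|^{s}\lesssim 2^{n}\HLM(|f_1|^{s})(x)$. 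For the second, substitute $w=\sin\theta$: there $\cos\theta\asymp 1$, so $d\theta\asymp dw$ and $w$ ranges over an interval of length $\asymp 2^{-n}$ with $|w|\lesssim 2^{-n}$; thus $x+rw$ ranges over an interval of length $\asymp r2^{-n}$ within distance $\lesssim r2^{-n}$ of $x$, and the integral becomes $\lesssim r^{-1}\int_{|v-x|\le Cr2^{-n}}|f_2|^{s}\lesssim 2^{-n}\HLM(|f_2|^{s})(x)$.

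To obtain \eqref{lastlemma:conclusion}, apply Cauchy--Schwarz in $\theta$ over $I_n$ and insert these two bounds with $s=2$; the factors $2^{n/2}$ and $2^{-n/2}$ cancel, giving $\scriptm_n(f_1,f_2)(x)\le C\,\HLM_2f_1(x)\,\HLM_2f_2(x)$ with $C$ independent of $n$. For the bound on $M^{\text{full}}$, fix $\tau>2$, set $\tau'=\tau/(\tau-1)<2$, and apply H\"older with exponents $(\tau,\tau')$ over $I_n$ --- using the first estimate with $s=\tau$ for the $\cos$--factor and the second with $s=\tau'$ for the $\sin$--factor --- to get
\[ \int_{I_n}|f_1(x+r\cos\theta)\,f_2(x+r\sin\theta)|\,d\theta\ \lesssim\ 2^{n/\tau}\,2^{-n/\tau'}\,\HLM_{\tau}f_1(x)\,\HLM_{\tau'}f_2(x)\ =\ 2^{(2/\tau-1)n}\,\HLM_{\tau}f_1(x)\,\HLM_{\tau}f_2(x), \]
using $\HLM_{\tau'}\le\HLM_{\tau}$ pointwise. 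The exponent $2/\tau-1$ is negative, so summing over $n$ --- and over the three rotated families near $\tfrac\pi2,\pi,\tfrac{3\pi}2$, where the roles of $f_1$ and $f_2$ are interchanged according to which of $\cos,\sin$ is degenerate --- together with the central contribution above yields $M^{\text{full}}(f_1,f_2)\le C_\tau\,\HLM_\tau f_1\cdot\HLM_\tau f_2$.

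The only genuine obstacle is the $2^{n}$ loss in the degenerate direction. It is compensated exactly by the $2^{-n}$ gain in the transverse direction, but only after splitting the product via H\"older; the resulting net exponent $\tfrac1\tau-\tfrac1{\tau'}=\tfrac2\tau-1$ is summable in $n$ precisely when $\tau>2$, which is why \eqref{lastlemma:conclusion} (the case $\tau=2$) is uniform but not summable in $n$, and why controlling the full supremum over $r\in(0,\infty)$ genuinely costs an extra power on the $f_j$. A secondary point, needed uniformly in $n$ and $r$, is that the $v$--intervals produced by the two substitutions lie within $O(r)$ of $x$ --- immediate from $|\cos\theta-1|\lesssim 2^{-2n}$ and $|\sin\theta|\lesssim 2^{-n}$ on $I_n$ --- so that their averages are dominated by $\HLM(|f_j|^{s})(x)$.
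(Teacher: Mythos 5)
Your proof is correct and follows essentially the same route as the paper: Cauchy--Schwarz (for $\tau=2$) or H\"older (for $\tau>2$) over the dyadic angular piece, the two substitutions exploiting that the degenerate direction costs $2^{n}$ while the transverse direction gains $2^{-n}$, domination by Hardy--Littlewood averages, and geometric summation in $n$ via the net exponent $\tfrac2\tau-1<0$. The only cosmetic difference is that you apply H\"older with conjugate exponents $(\tau,\tau')$ directly to the product, whereas the paper first uses Cauchy--Schwarz and then H\"older inside the degenerate factor; both give the same decay $2^{-n(1-2/\tau)}$.
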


Boundedness of $M^{\text{full}}$ from $L^{p_1}\times L^{p_2}$
to $L^{q(p_1,p_2)}$ for all $p_1,p_2>2$ is an immediate consequence of the second conclusion.
\end{proof}

\begin{proof}
By the Cauchy-Schwarz inequality,
\begin{equation*} 
\scriptm_n(f_1,f_2)(x) \le \sup_r\,
(2^n \int_{I_n} |f_1(x+r\cos(\theta))|^{2}\,d\theta)^{1/2} \cdot
(2^{-n} \int_{I_n} |f_2(x+r\sin(\theta))|^{2}\,d\theta)^{1/2}. \end{equation*}
Since $d\cos\theta/d\theta$ is bounded away from zero on $\bigcup_n I_n$, 
the first factor on the right-hand side satisfies
\[ \sup_r\, (2^n \int_{I_n} |f_1(x+r\cos(\theta))|^{2}\,d\theta)^{1/2} \le C \HLM_{2}f_1(x),\] 
uniformly in $r\in(0,\infty)$.
In the second factor, 
	%$\sin\theta$ is approximately $1-{1\over 2}(\theta-{\pi\over 2})^2$. 
substitute $\sin(\theta) = 1-s^2$ with $|s|\asymp 2^{-n}$. $2^{-n}\,d\theta$ is comparable to $ds$
so the second factor is majorized by
\begin{align*}
%C\sup_r \left(\int_{[2^{-n},2^{-n+1}]} 2^{-n} |f_2(x+r+rt^2)|^{2}\,dt\right)^{1/2} \le
 C\sup_r\, \left(\int_{[2^{-2n},2^{-2n+2}]}|f_2(x+r+rs)|^2\,ds\right)^{1/2}
 \le C\sup_r\, \left(\int_{|s|\leq 2}|f_2(x+rs)|^2\,ds\right)^{1/2},
\end{align*}
uniformly in $r$.

The second conclusion follows in the same way by summation over $n$, since 
\[\int_{I_n}|f_2(x+r\sin(\theta))|^2\,d\theta 
\lesssim 2^{-n(1-(2/\tau))} \big( \int |f_2(x+r\sin(\theta))|^\tau\,d\theta \big)^{2/\tau}\]
by H\"older's inequality.
\end{proof}

\begin{comment}
\eqref{lastlemma:conclusion} implies that
$\norm{\scriptm_n(f_1,f_2)}_{q(p_1,p_2)} \le C_{p_1,p_2}\norm{f_1}_{p_1}\norm{f_2}_{p_2}$ 
whenever $p_1,p_2>2$.
On the other hand, 
$\norm{\scriptm_n(f_1,f_2)}_\infty\lesssim 2^{-n}\prod_{j=1}^2\norm{f_j}_\infty$.
By interpolating  we conclude that whenever $p_1,p_2>2$
there exists $\delta>0$ such that 
\[\norm{\scriptm_n(f,g)}_{q(p_1,p_2)} \le C2^{-n\delta} \norm{f_1}_{p_1}\norm{f_2}_{p_2} \]
for all $f_1,f_2,n$.  Summing over $n$ completes the proof of Theorem~\ref{thm:full}. 
\end{proof}
\end{comment}

%%%%%%%%%%%%%%%%%%%%%%%%%%%%%%%%%%%%%%%%%%%%%%%%%%%%%%%%%%%
%%%%%%%%%%%%%%%%%%%%%%%%%%%%%%%%%%%%%%%%%%%%%%%%%%%%%%%%%%%

\section{Proof of the trilinear smoothing inequality}

\subsection{Convergence of the integral} \label{subsect:convergence}

Define the degenerate locus $\Sigma\subset U$
to be the union, over all distinct pairs of indices $i\ne j \in \{1,2,3\}$,
of $\{x: \det(\nabla\varphi_i(x),\,\nabla\varphi_j(x))=0\}$.
This set $\Sigma$ is a real analytic variety of positive codimension in $U$.
By \L{}ojasiewicz's theorem there exist $\kappa_1,\kappa_2,c\in(0,\infty)$ such that
\begin{equation}
|\det(\nabla\varphi_i(x),\,\nabla\varphi_j(x))|
\ge c\dist(x,\Sigma)^{\kappa_1}\ \forall\, i\ne j\in\{1,2,3\},\ \forall\, x\in U
\end{equation}
and
\begin{equation} \label{smallnbdsmall}
\big|\big\{x\in U: \dist(x,\Sigma)<\delta\big\}\big| = O(\delta^{\kappa_2})\ \ \forall\,\delta>0.
\end{equation}

Let $\delta>0$ be small. Cover $U_\delta = \{x\in U: \dist(x,\Sigma)>\delta\}$
by cubes $Q_m$ of sidelength $\delta^{2\kappa_1}$ that are pairwise disjoint,
except for possible intersections along their boundaries,
and that intersect $U_\delta$.
Then by \eqref{smallnbdsmall},
\[ \int_{\reals^2\setminus \bigcup_m Q_m} \prod_j |f_j\circ\varphi_j|\,|\eta|\,dx
= O(\delta^{\kappa_2} \prod_j \norm{f_j}_{L^\infty}).\]
On the other hand, a simple change of variables shows that
for any  permutation $(i,j,k)$ of $(1,2,3)$,
\[ \int_{Q_m} \prod_l |f_l\circ\varphi_l|\cdot|\eta|\,dx
= O(\delta^{-\kappa_1}\norm{f_i}_{L^1}\norm{f_j}_{L^1} \norm{f_k}_\infty).\]
There are at most $O(\delta^{-4\kappa_1})$ cubes $Q_m$.
Therefore 
%there exists $C<\infty$ such that 
for all $\delta>0$,
\[ |\scriptt(\bff)| \le C\delta^{\kappa_2} \prod_{l=1}^3 \norm{f_j}_{L^\infty}
+ C\delta^{-C\kappa_1} \norm{f_i}_{L^1}\norm{f_j}_{L^1} \norm{f_k}_\infty.\]
It follows from real interpolation that there exists $q<\infty$ such that
\[ \int \prod_j (f_j\circ\varphi_j)\cdot |\eta|\,dx \le C\prod_{j=1}^3 \norm{f_j}_{L^q}\]
for all nonnegative functions $f_j\in L^\infty$.
Therefore the integral defining $\scriptt(\bff)$ converges absolutely whenever each $f_j\in L^q$.

%%%%%%%%%%%%%%%%%%%%%%%%%%% %%%%%%%%%%%%%%%%%%%%%%%%%%% %%%%%%%%%%%%%%%%%%%%%%%%%%% %%%%%%%%%%%%%%%%%%%%%%%%%%%
%%%%%%%%%%%%%%%%%%%%%%%%%%% %%%%%%%%%%%%%%%%%%%%%%%%%%% %%%%%%%%%%%%%%%%%%%%%%%%%%% %%%%%%%%%%%%%%%%%%%%%%%%%%%

 \subsection{Two lemmas}
 % marker

Our qualitative main hypothesis implies a formally stronger quantitative reformulation.
For each index $j$, there exists a $C^\omega$ vector field
$V_j$ in $U$ that satisfies $V_j(\varphi_j)\equiv 0$,
but does not vanish identically. Fix one such vector field for each index. 
Define $h_j = V_3\varphi_j\in C^\omega$, and define 
\begin{equation*} 
\scriptl(g_1,g_2) = \sum_{j=1}^2 h_j\cdot (g_j\circ\varphi_j). 
\end{equation*}
For any $C^1$ functions $f_j$,
\begin{equation*} V_3\sum_{j=1}^3 (f_j\circ\varphi_j) = \scriptl(f'_1,f'_2)
\end{equation*}
where $f'$ denotes the derivative of $f$.
The transversality hypothesis
that $\det(\nabla\varphi_j,\nabla\varphi_3)$ does not vanish identically in $U$, 
is equivalent to the condition that neither coefficient function $h_j$ vanishes identically.

\begin{lemma}  \label{lemma:jets}
There exist $M_0\in\naturals$ 
and a real analytic variety $\Sigma'\subset U$ of positive codimension
with the following property.
For any $M\ge M_0$ 
there exist $C<\infty$ and $\kappa<\infty$,
such that for any $z\in U$, any neighborhood $U'$ of $z$, 
and any functions $F_1,F_2$ that belong to $C^M(U')$,
\begin{equation} \label{jetted}
\dist(z,\Sigma')^\kappa \sum_{j=1}^2 \sum_{1\le k\le M} 
|F_j^{(k)}(\varphi_j(z))| 
\\ \le  C \sum_{1\le |\alpha|\le M} 
\Big| \frac{\partial^\alpha}{\partial x^\alpha} \scriptl(F_1,F_2)(z) \Big|.
\end{equation}
\end{lemma}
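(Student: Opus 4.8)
The plan is to prove Lemma~\ref{lemma:jets} by extracting a \emph{pointwise linear-algebraic} statement from the qualitative main hypothesis, then applying \L{}ojasiewicz's theorem to convert it into the quantitative inequality with a controlled power of $\dist(z,\Sigma')$.

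First I would set up the finite-dimensional reduction. Fix $M$ and work with jets: to a pair $(F_1,F_2)$ associate the vector $\bff = (F_1'(\varphi_1(z)),\dots,F_1^{(M)}(\varphi_1(z)),F_2'(\varphi_2(z)),\dots,F_2^{(M)}(\varphi_2(z))) \in \complex^{2M}$. The left side of \eqref{jetted} is (a constant times) $|\bff|$. For the right side, I would expand each $\partial^\alpha_x \scriptl(F_1,F_2)(z)$ with $1\le|\alpha|\le M$ by Leibniz and the chain rule: since $\scriptl(F_1,F_2) = \sum_j h_j\cdot(F_j'\circ\varphi_j)$, each such derivative is a $\complex$-linear combination of $F_1^{(k)}(\varphi_1(z))$ and $F_2^{(k)}(\varphi_2(z))$, $1\le k\le M$, with coefficients that are real-analytic functions of $z$ built from $h_j$, derivatives of $h_j$, and derivatives of $\varphi_j$ up to order $M$. (Here one must take $M_0$ large enough — roughly $M_0$ of size comparable to the transversality/non-degeneracy order — so that enough derivatives are available; the higher-order jet entries appear because differentiating $(F_j'\circ\varphi_j)$ repeatedly in $x$ produces $F_j^{(k)}\circ\varphi_j$ multiplied by products of $\partial^\beta\varphi_j$.) Thus there is a matrix $\mathcal A(z)$, of size $N\times 2M$ with $N = \#\{\alpha: 1\le|\alpha|\le M\}$, whose entries are real-analytic on $U$, such that the right side of \eqref{jetted} is comparable to $|\mathcal A(z)\bff|$. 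The inequality \eqref{jetted} then amounts to: $\dist(z,\Sigma')^\kappa |\bff| \le C|\mathcal A(z)\bff|$ for all $\bff$, i.e., $\mathcal A(z)$ is injective with quantitative lower bound on its smallest singular value.

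Next comes the heart of the matter: showing $\mathcal A(z)$ is injective for $z$ outside a proper real-analytic subvariety. Suppose $\mathcal A(z_0)\bff = 0$ for some $\bff\ne 0$. The point is to manufacture from this a genuine nonconstant solution $\sum_j(g_j\circ\varphi_j)\equiv 0$ on a neighborhood, contradicting the main hypothesis. To do this I would choose, near $z_0$, functions $G_1,G_2$ whose jets at $\varphi_1(z_0),\varphi_2(z_0)$ realize $\bff$, and observe that $\partial^\alpha_x\scriptl(G_1,G_2)(z_0)=0$ for $1\le|\alpha|\le M$; since $\scriptl(G_1,G_2) = V_3\big(\sum_{j=1}^3(G_j\circ\varphi_j)\big)$ for an appropriate $G_0$ (recovered by integrating, using that the full expression is a function to which $V_3$ is applied — actually $\scriptl(G_1',G_2')$-type bookkeeping is needed here), one gets that a certain function vanishes to high order at $z_0$. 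Running this for $z_0$ ranging over an open set and invoking real-analyticity plus the connectedness of $U$, one would derive an identity $\sum_{j=0}^2(g_j\circ\varphi_j)\equiv 0$ with some $g_j$ nonconstant, contradicting the hypothesis — so $\mathcal A(z)$ must be injective on a dense open set, hence (by analyticity of $\det$ of the various $2M\times 2M$ minors) off a proper analytic subvariety $\Sigma'$. This is essentially the argument worked out in \cite{trilinear}, \cite{CDR}, \cite{quadrilinear}, and I would cite those works for the detailed jet bookkeeping.

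Finally, with injectivity of $\mathcal A(z)$ on $U\setminus\Sigma'$ in hand, I would apply \L{}ojasiewicz's inequality to the real-analytic function $z\mapsto \sigma_{\min}(\mathcal A(z))$ (equivalently, to the sum of squares of $2M\times 2M$ minor determinants, which is real-analytic and whose zero set is contained in $\Sigma'$): there exist $C<\infty$ and $\kappa<\infty$ with $\sigma_{\min}(\mathcal A(z)) \ge c\,\dist(z,\Sigma')^{\kappa}$ on $U$ (shrinking $U$ to a relatively compact subneighborhood of the support of $\eta$ if necessary). Combined with $|\mathcal A(z)\bff|\ge \sigma_{\min}(\mathcal A(z))|\bff|$ and the two comparabilities established above, this yields \eqref{jetted}.

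I expect the main obstacle to be Step two: correctly carrying out the jet computation so that the matrix $\mathcal A(z)$ genuinely captures all of $F_1^{(k)},F_2^{(k)}$ for $1\le k\le M$ (which forces the right choice of $M_0$ and a careful accounting of how high-order $x$-derivatives of $\scriptl$ feed back onto jet components), and then rigorously converting a kernel vector of $\mathcal A(z_0)$ into a contradiction with the ``no nonconstant solutions'' hypothesis rather than merely a formal power-series obstruction. The \L{}ojasiewicz step and the convergence-of-the-integral preliminaries are comparatively routine.
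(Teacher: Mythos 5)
Your overall architecture is the same as the paper's: encode the order $1$ through $M$ derivatives of $(F_1,F_2)$ at $(\varphi_1(z),\varphi_2(z))$ as a jet vector, observe that the right-hand side of \eqref{jetted} is given by a matrix with real analytic entries applied to that vector, prove injectivity of this jet map off a proper real analytic subvariety $\Sigma'$, and finish with \L{}ojasiewicz applied to $\det(\mbbm(z)^*\mbbm(z))$ (equivalently your sum of squares of maximal minors). The \L{}ojasiewicz endgame and the definition of $\Sigma'$ as the zero set of that determinant are correct and match the paper.

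The gap is exactly the step you flag as the main obstacle, and your sketch of it does not close it. A nonzero kernel vector of $\mathcal{A}(z_0)$ only says that for some $M$-jet data the derivatives of $\scriptl(G_1,G_2)$ of orders $1,\dots,M$ vanish at the single point $z_0$; this is a finite-order, pointwise statement and does not yield functions $g_j$ with $\sum_j (g_j\circ\varphi_j)\equiv 0$ on an open set. ``Running $z_0$ over an open set'' does not repair it: the kernel vectors vary with $z_0$, need not depend analytically on $z_0$, and there is no mechanism for integrating a field of jet-kernel vectors into genuine analytic solutions; nor does your argument explain why one fixed finite $M_0$ works, which is part of the statement. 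The paper does not argue this implication directly either: it imports from \cite{foursublevel} the theorem that if the only $C^\omega$ solution of $\sum_j h_j\,(F_j\circ\varphi_j)=0$ on any nonempty open set is the zero solution, then at every $z$ with $h_1(z)h_2(z)\ne 0$ the jet map $L_{M,z}$ is injective for all sufficiently large $M$; the proof in the paper consists of verifying that hypothesis (integrate to get $G_j$ with $G_j'=F_j$, note $V_3\sum_{j=1}^2 (G_j\circ\varphi_j)\equiv 0$, produce $G_3$ locally, and invoke the main hypothesis of Theorem~\ref{thm:trilinearsmoothing} to force all $G_j$ constant, hence $F_1=F_2\equiv 0$) --- a reduction your write-up gestures at but leaves muddled, and the works you propose to cite (\cite{trilinear}, \cite{CDR}, \cite{quadrilinear}) are not where this jet-injectivity input resides. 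A secondary imprecision: the right-hand side of \eqref{jetted} is not comparable to $|\mathcal{A}(z)\bff|$, since $\partial^\alpha\scriptl(F_1,F_2)(z)$ also contains the zeroth-order values $F_j(\varphi_j(z))$ multiplied by $\partial^\alpha h_j(z)$; one must either restrict to jets normalized by $F_j(\varphi_j(z))=0$, as the paper does, or prove injectivity modulo the span of those extra columns.
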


Here $F^{(k)}$ denotes the $k$-th derivative of $F$.

\begin{proof}
For each $z\in U$ there is a natural linear mapping $L_{M,z}$ from pairs of $M$-jets
of $(F_1,F_2)$ at $(\varphi_1(z),\varphi_2(z))$ satisfying $F_j(\varphi_j(z))=0$,
to $M$-jets of functions vanishing at $z$, defined by interpreting the mapping
$(F_1,F_2)\mapsto \sum_{j=1}^2 h_j(z)\,(F_j\circ\varphi_j)(z)$ as a mapping of $M$-jets. 
It is shown in \cite{foursublevel}
that if every $C^\omega$ solution $(F_1,F_2)$ of $\sum_j h_j\cdot (F_j\circ\varphi_j)=0$
in any nonempty open set vanishes identically,
such that for every $z$ satisfying $h_1(z)\ne 0$ and $h_2(z)\ne 0$,
there exists $\overline{M}$ such that $L_{M,z}$ is injective for every $M\ge \overline{M}$.

This hypothesis on solutions of $\sum_j h_j\cdot (F_j\circ\varphi_j)=0$ is satisfied. 
%For any nonempty open set $U'\subset U$, the only $C^\omega$ solution $(F_1,F_2)$ of 
%$h_1(x)(F_1\circ\varphi_1)(x) + h_2(x)(F_2\circ\varphi_2)(x)\equiv 0$
%is the trivial solution $(F_1,F_2)\equiv 0$.
Indeed, given a solution $(F_1,F_2)$ in an open neighborhood of $z$, 
construct $G_j$ to satisfy $G'_j=F_j$ in a neighborhood of $\varphi_j(z)$.
Then $V_3\sum_{j=1}^2 (G_j\circ\varphi_j)\equiv 0$.
Therefore in any sufficiently small subball in which $V_3$ does not vanish, 
there exists a $C^\omega$ solution $G_3$ of $\sum_{j=1}^3 (G_j\circ\varphi_j)\equiv 0$.
Therefore $G_1,G_2,G_3$ must be locally constant by our main hypothesis, 
whence $F_1,F_2$ vanish identically. 

Consider any $z_0$ such that $h_j(z_0)$ is nonzero for both indices $j=1,2$. 
Consider any $M\in\naturals$ for which $L_{M,z_0}$ is injective.
Representing $L_{M,z}$ by a matrix $\mbbm(z)$ with real analytic coefficients,
the determinant $\det(\mbbm(z)^*\circ\mbbm(z))$ is nonzero at $z_0$.
This determinant is a real analytic function of $z$.
Therefore the set on which it vanishes is a real analytic subvariety
$\Sigma'\subset U$ of positive codimension.
By \L{}ojasiewicz's theorem,
this determinant is bounded below by $c\dist(z,\Sigma')^\kappa$
for some $\kappa,c>0$.
Therefore \eqref{jetted} holds.
\end{proof}

\begin{lemma} \label{lemma:stronghypothesis}
Let $N,d\ge 1$.
There exist $\tau>0$ and $A,\sigma<\infty$ with the following property.
Let $Q\subset\reals^d$ be the closed unit cube. Let $f:Q\to\complex$ be a $C^{N+1}$ function,
Define
\begin{equation} 
\delta = \min_{x\in Q} \sum_{0\le |\alpha|\le N} \left|\frac{\partial^\alpha f}{\partial x^\alpha}(x)\right|
\end{equation}
and $B = 1 + \sum_{|\alpha| = N+1} \norm{\partial^\alpha f}_{C^0(Q)}$.
For each $\eps>0$ let $S(f,\eps) = \{x\in Q: |f(x)| \le \eps\}$.  Then
\begin{equation} |S(f,\eps)| \le C \eps^\tau \delta^{-\sigma} B^A.  \end{equation}
\end{lemma}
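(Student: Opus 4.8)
The plan is to prove a sublevel set estimate for a single smooth function $f$ on the unit cube, with explicit dependence on the lower bound $\delta$ for the sum of derivatives up to order $N$ and the upper bound $B$ for derivatives of order $N+1$. The standard tool is the classical van der Corput / Christ--Gressman--type sublevel set lemma in one variable: if $g \in C^k([0,1])$ and $|g^{(k)}(s)| \ge \lambda > 0$ throughout, then $|\{s: |g(s)| \le \eps\}| \le C_k (\eps/\lambda)^{1/k}$. First I would reduce the $d$-dimensional statement to the one-dimensional one by slicing: for a generic direction (say one of the coordinate directions, after possibly relabeling), Fubini reduces $|S(f,\eps)|$ to an integral over the transverse variables of the one-dimensional measures of $\{t: |f(\cdot,t)| \le \eps\}$ along lines, and it suffices to bound each such one-dimensional slice.

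The core estimate is the following dichotomy, carried out pointwise in $x \in Q$. At $x$ there is some multi-index $\alpha$ with $|\alpha| \le N$ and $|\partial^\alpha f(x)| \ge \delta/(\text{number of multi-indices})$, hence $\gtrsim \delta$. Pick a coordinate direction $e_i$ appearing in $\alpha$; then some pure directional derivative $\partial_i^k f$ with $1 \le k \le N$ satisfies $|\partial_i^k f(x)| \gtrsim \delta$ — this requires a small combinatorial argument (a mixed partial being large forces some pure partial in one of its directions to be large, with a constant depending only on $N,d$; one can see this by Taylor-expanding or by a compactness/normalization argument). Next, since $\partial_i^{k+1} f$ is controlled by $B$ up to order $N+1$, the set near $x$ along the $e_i$-direction where $|\partial_i^k f| \ge c\delta$ has length $\gtrsim \delta/B$. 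On that interval, $|\partial_i^k f|$ stays $\gtrsim \delta$, so by the one-dimensional sublevel lemma applied with $k$-th derivative bounded below by $c\delta$, the portion of the slice inside that interval on which $|f| \le \eps$ has measure $\lesssim (\eps/\delta)^{1/k} \le (\eps/\delta)^{1/N}$ (absorbing $\eps \le 1$, $\delta$ bounded). A covering argument over $x$ along each line — the bad intervals have length $\gtrsim \delta/B$, so $O(B/\delta)$ of them suffice to cover $[0,1]$ — then gives a bound $\lesssim (B/\delta)\,(\eps/\delta)^{1/N}$ for each one-dimensional slice, and integrating over the transverse cube keeps the same bound. Collecting powers yields $|S(f,\eps)| \le C\,\eps^{1/N}\,\delta^{-(1+1/N)}\,B$, which is of the asserted form with $\tau = 1/N$, $\sigma = 1 + 1/N$, $A = 1$ (these can be taken uniform since the combinatorial constants depend only on $N,d$).

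The main obstacle I expect is the bookkeeping in the covering/slicing step: one must choose the coordinate direction $e_i$ measurably (or handle the finitely many choices by a union bound, which only costs a constant), and one must ensure the lower-bound interval for $|\partial_i^k f|$ has length uniformly $\gtrsim \delta/B$ rather than something smaller — this is where the hypothesis $B \ge 1$ and the normalization to the unit cube are used, and where one should be careful that $k$ can be as large as $N$, so the one-dimensional sublevel constant $C_k$ must be taken as $\max_{1\le k\le N} C_k$. A secondary subtlety is the passage from "a mixed partial $\partial^\alpha f$ of order $\le N$ is $\gtrsim\delta$" to "a \emph{pure} directional partial of order $\le N$ is $\gtrsim\delta$"; the cleanest route is to note that the full $N$-jet of $f$ at $x$, viewed as a polynomial, has some coefficient of size $\gtrsim\delta$, and a polynomial of degree $\le N$ in $d$ variables with a coefficient of size $\gtrsim\delta$ must have some pure one-variable restriction whose $k$-th derivative at the origin is $\gtrsim_{N,d}\delta$ for some $k\le N$ — a finite-dimensional linear-algebra fact. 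None of these steps is deep; the content is entirely in organizing the quantitative dependence so that the exponents $\tau,\sigma,A$ come out uniform in $f$.
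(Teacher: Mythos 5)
There are two genuine gaps in your outline, one fixable and one that undermines the main covering step. First, the pointwise reduction from ``some $\partial^\alpha f(x)$ with $|\alpha|\le N$ has size $\gtrsim\delta$'' to ``some \emph{pure coordinate} derivative $\partial_i^k f(x)$ with $1\le k\le N$ has size $\gtrsim\delta$'' is false: for $f(x_1,x_2)=x_1x_2$ at the origin the mixed partial equals $1$ while $f$ and every pure coordinate partial of every order vanish there. The same example defeats the ``cleanest route'' in your last paragraph if ``pure one-variable restriction'' means restriction to coordinate lines, since the jet polynomial $x_1x_2$ restricts to zero on both axes. What is true is that the restrictions of a degree-$N$ polynomial to a suitable \emph{finite set of generic (non-coordinate) directions} determine it, so some directional derivative of order $k\le N$ along one of finitely many fixed directions is $\gtrsim_{N,d}\delta$; but then your Fubini/slicing must be run along those directions rather than along an $e_i$ ``appearing in $\alpha$'', and the case $\alpha=0$ (only $|f(x)|$ itself large) needs separate, if easy, treatment.

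The more serious gap is the persistence/covering step: you claim $|\partial_i^k f|\ge c\delta$ on an interval of length $\gtrsim\delta/B$ around $x$ ``since $\partial_i^{k+1}f$ is controlled by $B$''. But $B$ bounds only derivatives of order exactly $N+1$; the intermediate orders $k+1,\dots,N$ are bounded neither by $B$ nor by $\delta$ (a lower bound). For example, a polynomial of degree $\le N$ with enormous coefficients has $B=1$ while its $(k+1)$-st derivative is huge, so for $k<N$ the interval on which the $k$-th derivative stays $\gtrsim\delta$ can be much shorter than $\delta/B$, and the count of $O(B/\delta)$ intervals per line is unjustified. A repair requires selecting at each point the \emph{largest} order with a large pure derivative and a Taylor/stopping argument using that orders $k+1,\dots,N$ are small at that point and only order $N+1$ is bounded globally; this changes the exponents and is essentially a different proof. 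The paper's argument for Lemma~\ref{lemma:stronghypothesis} sidesteps all of this: partition $Q$ into subcubes of side $r$ with $Br^{N+1}\sim\eps$, replace $f$ on each subcube by its degree-$N$ Taylor polynomial at the center (error $O(\eps)$), note that the polynomial's jet there has size $\ge\delta$, and apply the scale-invariant sublevel-set inequality for normalized polynomials of degree $N$; summing over the $O(r^{-d})$ cubes gives $|S(f,\eps)|=O(\eps^\tau\delta^{-\sigma}B^A)$ with no need for a one-dimensional van der Corput lemma or any control of intermediate derivatives. As written, your proposal does not yet constitute a proof.
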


\begin{proof}
We will use this fact:
For any $d,N$ there exist positive constants $\sigma,C$
such that for any polyomial $R$ of degree $N$
satisfying $\sum_{0\le|\beta|\le N} |\partial^\beta R(0)|\ge 1$,
\begin{equation} \label{Rsublevel} |\{x\in Q: |R(x)|\le\eps\}| \le C\eps^{\sigma}.\end{equation}
The case $d=1$ is well known, and the higher-dimensional case is a simple consequence.

Let $B = 1 + \max_{|\alpha| = N+1} \norm{\partial^\alpha f}_{C^0}$.
Let $c_0>0$ be a small constant. Define $r\in(0,1]$ to satisfy $Br^{N+1} = c_0\eps$.
Partition $Q$ into $O(r^{-d})$ subcubes $Q_i$, each of sidelength $r$.

On each cube $Q_i$, express \[f = P_i + O(B r^{N+1}) = P_i + O(c_0\eps)\]
where $P_i$ is the Taylor  polynomial of degree $N$ for $f$ at the center $x_i$ of $Q_i$.
If $c_0$ is chosen sufficiently small then $|f(x)-P_i(x)|\le\eps/2$ for every $x\in Q_i$.
Thus it suffices to bound
$\sum_i |\{x\in Q_i: |P_i(x)|\le \tfrac12\eps\}|$.

Now $\sum_{|\beta|\le N} |\partial^\beta P(x_i)|\ge \delta$
since this inequality holds for $f$ and since $P$ is the Taylor polynomial
of degree $N$ for $f$ at $x_i$.
Applying the inequality \eqref{Rsublevel} to
\[ R(x) = \delta^{-1}r^{-N} P_i(r\cdot (x+x_i))\] and incorporating a Jacobian factor
$r^d$ resulting from this change of variables yields
\[ |\{x\in Q_i: |P_i(x)\le\eps/2  \}| = O(r^d\cdot (r^{-N} \delta^{-1} \eps)^{\sigma}). \]
Therefore by summing over $O(r^{-d})$ cubes $Q_i$ we obtain the bound
\[ |\{x\in Q: |f(x)|\le\eps\}| 
= O((r^{-N} \delta^{-1}\eps)^\sigma)  
= O\big(\big[(B^{-1}\eps)^{-N/(N+1)} \cdot \delta^{-1}\eps\big]^\sigma\big)  
= O(B^{A} \delta^{-\sigma} \eps^\tau) \]
with $A = N\sigma/(N+1)$ and $\tau = \sigma/(N+1)>0$.
\end{proof}

\subsection{Reduction to a sublevel set inequality in the nondegenerate case }

%%%%%%%%%%%%%%%%%%%%%%%%%%% %%%%%%%%%%%%%%%%%%%%%%%%%%% %%%%%%%%%%%%%%%%%%%%%%%%%%% %%%%%%%%%%%%%%%%%%%%%%%%%%%
%%%%%%%%%%%%%%%%%%%%%%%%%%% %%%%%%%%%%%%%%%%%%%%%%%%%%% %%%%%%%%%%%%%%%%%%%%%%%%%%% %%%%%%%%%%%%%%%%%%%%%%%%%%%

We say that $\Phi = (\varphi_1,\varphi_2,\varphi_3)$ is nondegenerate if
for every pair of distinct indices $i,j$, 
$\det(\nabla\varphi_i(x),\nabla\varphi_j)$
vanishes nowhere in $U$, 
and moreover, the mapping $x\mapsto (\varphi_i(x),\varphi_j(x))$
is a diffeomorphism of $U$ onto an open subset of $\reals^2$.
For the present, we assume $\Phi$ to be nondegenerate.
The general case will be treated in \S\ref{subsect:degen}.

Let $\lambda\ge 1$.
It suffices to show that there exist $\tau>0$ and $C<\infty$ such that
if each $\widehat{f_j}(\xi)$ is supported where $|\xi|= O(\lambda)$, 
and where at least one Fourier transform 
$\widehat{f_k}(\xi)$ is supported where $|\xi|\gtrsim\lambda$, then
\begin{equation}
|\scriptt(\bff)| \le C\lambda^{-\tau} 
	\ \text{ if $\norm{f_j}_{L^\infty}\le 1$ for every index $j$}.
%	\prod_{j=1}^3 \norm{f_j}_{L^\infty}.
\end{equation}
%By the multilinearity of $\scriptt$, we may assume without loss of generality
%that $\norm{f_j}\le 1$ for each index $j$, and we seek a bound of the form $\scriptt(\bff) = O(\lambda^{-\tau})$.

Fix an exponent $\gamma\in(\tfrac12,1) $. Partition $\reals^1$
into intervals $I_m$ of sidelengths $\lambda^{-\gamma}$. 
Denote by $I_m^*$ the interval of length $3\lambda^{-\gamma}$ with the same center as $I_m$.
Decompose each $f_j$ as
\begin{equation} \label{localFourier}
f_j(y) = \sum_{m\in\integers} \eta_m(y) \sum_{k\in\integers} a_{j,m,k} e^{i\pi \lambda^\gamma ky}
\end{equation}
with each $\eta_m\in C^\infty_0(\reals)$ supported on $I_m^*$ 
satisfying $\norm{\eta^{(n)}_m}_{C^0} \le C_n \lambda^{n\gamma}$ for every $n\ge 0$, where 
$\eta^{(n)}$ denotes the $n$-th derivative of $\eta$.
By Bessel's inequality and the bound $\norm{f_j}_{L^\infty} = O(1)$,
the coefficients satisfy
\begin{equation} \sum_{k\in\integers} |a_{m,k}|^2 = O(1) \end{equation}
uniformly in all parameters.

Express
\begin{equation}
	\scriptt(\bff)
	= \sum_{\bm} \sum_{\bk} \prod_{j=1}^3 a_{m_j,k_j} I(\bm,\bk)
\end{equation}
where $\bm = (m_1,m_2,m_3)$, $\bk = (k_1,k_2,k_3)$, and
\begin{equation}
I(\bm,\bk) = \int_{\reals^2} e^{i\pi\lambda^\gamma \sum_{j=1}^3 k_j\varphi_j(x)}
\,\prod_{j=1}^3 \eta_{m_j}(\varphi_j(x))\,\eta(x)\,dx.
\end{equation}
We say that $\bm = (m_1,m_2,m_3)$ is interacting if there exists $x\in U$ such that $\varphi_j(x)\in I_{m_j}$
for each $j\in\{1,2,3\}$.
For each interacting tuple $\bm$, choose a point $x_\bm$ satisfying these inclusions.
Let $\rho>0$ be a small exponent.
$I(\bm,\bk)=0$ for all $\bk$ if $\bm$ is not interacting, so
it suffices to sum only over interacting $\bm$. 
There are $O(\lambda^{2\gamma})$ such ordered triples $\bm$.

It follows from integration by parts that there exists $c>0$ such that for every interacting $\bm$,
\begin{equation} \label{nonstationarydecay} 
|I(\bm,\bk)| \le C_N \lambda^{-N}\ \forall N<\infty \end{equation}
if
\begin{equation} \label{stationary}
\big|\nabla(\sum_{j=1}^3 k_j\varphi_j(x_\bm))\big| \ge\lambda^\rho.  \end{equation}
The constants $C_N$ depend also on $\rho$.
The proof of \eqref{nonstationarydecay} exploits
the quantitative hypothesis on the Fourier support of each factor $f_j$,
second order Taylor expansion of $\varphi_j$ about $x_\bm$,
and the restriction $\gamma>\tfrac12$.

For any interacting $\bm$ and any $k_1$,
there are $O(\lambda^{2\rho})$ ordered pairs $(k_2,k_3)$
such that $\bk$ satisfies the stationarity condition \eqref{stationary}.
This holds for any permutation of the indices $1,2,3$.

Consider an arbitrary interacting $\bm$, and denote by $\sum^*_\bk A_{\bk}$
the sum of some quantities $A_{\bk}$
over all $\bk = (k_1,k_2,k_3)$ that satisfy the stationarity condition \eqref{stationary}. 
By taking the supremum over one of the three indices $k_1,k_2,k_3$
and applying the Cauchy-Schwarz inequality with respect to the other two indices,
doing this for all three possible choices over the first index chosen,
and taking the geometric mean of the resulting three inequalities, we get
\begin{equation} \label{extrafactor}
\begin{aligned}
	\sum^*_{\bk} \prod_{j=1}^3 |a_{m_j,k_j}|
	&\le C\prod_{j=1}^3 ((\sum_k |a_{m_j,k}|^2)^{1/2})^{2/3} (\sup_k |a_{m_j,k}|)^{1/3}
	\\&
	\le C \prod_{i=1}^3 \norm{f_i}_{L^\infty}^{2/3}\cdot\prod_{j=1}^3 \sup_k |a_{m_j,k}|^{1/3}.
\end{aligned} \end{equation}

Let $\sigma>0$ be a small exponent.
Decompose each $f_j$
as $f_j = g_j+h_j$
where each $h_j$ has an expansion of the form \eqref{localFourier}
with each coefficient satisfying $|a_{m_j,k}|\le \lambda^{-\sigma}$,
while each $g_j$ has an expansion of the form \eqref{localFourier}
with at most $O(\lambda^{2\sigma})$ nonzero coefficients $a_{m_j,k}$ for each $m_j$.

Expand $\scriptt(\bff)$ as a sum of eight terms by expressing 
$f_j = g_j+h_j$ for each index $j$ in the integral defining $\scriptt(\bff)$.
Seven of those terms involve at least one $h_j$. For each of those seven terms
we obtain an upper bound $O(\lambda^{-\sigma/3})$ from 
\eqref{extrafactor} and the assumption $|a_{m_j,k}| \le \lambda^{-\sigma}$.

There remains $\scriptt(\bg) = \scriptt(g_1,g_2,g_3)$. 
Each $g_j$ can be expressed as a sum of $O(\lambda^{2\sigma})$ terms,
each of which takes the local exponential monomial form
\begin{equation} \label{monomialform}
	g_j(y) = \sum_m a_{j,m} e^{i\pi\lambda^\gamma k_{j,m}y}\eta_m(y)
\end{equation}
with $a_m = O(1)$, for some $k_{j,m}\in\integers$.
Thus we have shown that
if each $f_j$ has the indicated Fourier support and satisfies $\norm{f_j}_{L^\infty} = O(1)$ then
\begin{equation} \label{3termbound}
|\scriptt(\bff)| \le C_{N,\rho} \lambda^{-N} + C\lambda^{C\rho} \lambda^{-\sigma/3}
+ C\lambda^{6\sigma} \sup_{\bg}|\scriptt(\bg)|
\end{equation}
for any $N<\infty$ and any $\rho,\sigma>0$,
where the supremum is taken over all $\bg$ of local exponential monomial form \eqref{monomialform}.
It suffices to prove that there exists $\tau>0$ such that
$|\scriptt(\bg)| = O(\lambda^{-\tau})$ uniformly for all such $\bg,\lambda$,
for then upon choosing $\sigma>0$ to be sufficiently small as a function of $\tau$,
and then $\rho>0$ sufficiently small as a function of $\sigma$, 
\eqref{3termbound} becomes the desired upper bound for $\scriptt$.

Now assume that $\widehat{f_1}(\xi)$ is supported where $|\xi|\gtrsim\lambda$.
It is elementary to verify, using integration by parts, that the $\ell^2$ norm, with respect to $k$, 
of those coefficients $a_{1,m,k}$ satisfying $|k|\le \lambda^{1-\gamma-\rho}$
is $O(\lambda^{-c\rho})$ for some $c>0$.
Consequently we may assume that $g_1$ satisfies 
\begin{equation} |k_{1,m}|\ge \lambda^{1-\gamma-\rho} \ \forall\, m.  \end{equation}

Classify interacting tuples $\bm$ into two types. Say that $\bm$ is stationary if
\begin{equation} 
\big|\nabla(\sum_{j=1}^3 k_{j,m_j} \varphi_j(x_\bm))\big| <\lambda^\rho,  \end{equation}
and nonstationary otherwise. 
Expand $\scriptt(\bg)$ as a sum of contributions of interacting ordered triples $\bm$, as above.
By \eqref{nonstationarydecay}, the sum of the contributions of all nonstationary 
indices $\bm$ to $\scriptt(\bg)$ is $O_N(\lambda^{-N})$ for every $N<\infty$. 

To complete the proof,
it suffices to show that the sum of the Lebesgue measures of the supports of $\eta_\bm$,
summed over all stationary interacting triples $\bm$,
is $O(\lambda^{-\tau})$ for some $\tau>0$.
In doing this, it suffices to treat those $\bm\in\integers^3$ that lie in an arbitrary congruence class
modulo $3\integers^3$, since there are finitely many such congruence classes. Fix any such class henceforth,
and restrict attention to $\bm$ lying in that class.

Define 
\begin{equation}
F_j(y) = \sum_{m_j}\lambda^{-(1-\gamma-\rho)} k_{j,m_j}\,\one_{I_{m_j}^*}(y),
\end{equation}
noting that the restriction of $m_j$ to a congruence class modulo $3$
ensures that the intervals $I_{m_j}^*$ are pairwise disjoint.
These functions satisfy 
% $|F_j| = O(\lambda^{C\rho})$ and 
$|F_1(y)|\gtrsim 1$ for all $y$.
The union over all interacting stationary $\bm$ of the set of all $x\in\reals^2$
satisfying $\varphi_j(x)\in I_{m_j}^*$ for all three indices $j$
is contained in
\begin{equation} \label{sublevel1}
\sS = \big\{x\in U: \big|\sum_{j=1}^3 (F_j\circ\varphi_j)(x)\nabla\varphi_j(x)\big| 
= O(\lambda^{-(1-\gamma)+C\rho}) \big\}.
\end{equation}

By setting $\eps = \lambda^{-1+\gamma+C\rho}$
and taking the inner product of 
$\sum_{j=1}^3 (F_j\circ\varphi_j)(x)\nabla\varphi_j(x) $ with the vector $V_3(x)$
we find that
\begin{equation}
\big| h_1(x)(F_1\circ\varphi_1)(x) + h_2(x)(F_2\circ\varphi_2)(x)\big| = 
O(\eps) \ \ \forall\,x\in\sS.
\end{equation}
Therefore to conclude the analysis of the nondegenerate case, it suffices 
the next lemma.

\begin{lemma} \label{lemma:sublevel}
There exist $\tau>0$ and $C<\infty$ such that for every $\eps>0$
and every ordered pair $(F_1,F_2)$ of Lebesgue measurable functions,
\begin{equation} \label{sublevel2}
\big|	\{x\in	S(F_1,F_2,\eps): 
	%= \{x\in U: |\scriptl(F_1,F_2)(x)|<\eps 
|F_1\circ\varphi_1(x)|\ge 1 \}\big|  \le C \eps^\tau.
\end{equation}
\end{lemma}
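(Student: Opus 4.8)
The plan is to change variables so that the assertion becomes a sublevel set estimate for a single bivariate real analytic function, to reduce the measure of that sublevel set to an average of one-dimensional sublevel sets via the Cauchy--Schwarz inequality, and to close the estimate using the main hypothesis of Theorem~\ref{thm:trilinearsmoothing} together with \L{}ojasiewicz inequalities and Lemma~\ref{lemma:stronghypothesis}. Recall that, by the reduction preceding the statement, $S(F_1,F_2,\eps)$ is the set of $x$ at which $|\scriptl(F_1,F_2)(x)| = |h_1(x)(F_1\circ\varphi_1)(x)+h_2(x)(F_2\circ\varphi_2)(x)|\le C\eps$.

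Since the assertion is local and $\eta$ has compact support, it suffices to treat $x$ in a fixed compact subset of $U$, and a partition of unity lets us assume it lies in a patch on which $x\mapsto(\varphi_1(x),\varphi_2(x))$ is a diffeomorphism onto a bounded open set, with Jacobian bounded above and below. In the nondegenerate case $h_1$ and $h_2$ are nowhere vanishing on the patch (each being, up to a nonvanishing real analytic factor, $\det(\nabla\varphi_1,\nabla\varphi_3)$, respectively $\det(\nabla\varphi_2,\nabla\varphi_3)$), so in the coordinates $(u,v)=(\varphi_1(x),\varphi_2(x))$ we may write $\scriptl(F_1,F_2)=h_1\cdot\bigl(F_1(u)-S(u,v)F_2(v)\bigr)$ with $S:=-h_2/h_1$ real analytic and nowhere zero on the closure of the patch. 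Absorbing $|h_1|$ and the Jacobian into constants, it suffices to bound the planar measure of
\[
\widetilde\Omega=\bigl\{(u,v):\ |F_1(u)-S(u,v)F_2(v)|\le C\eps\ \text{ and }\ |F_1(u)|\ge 1\bigr\}.
\]

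Because the $v$--range is bounded, the Cauchy--Schwarz inequality in $u$ gives $|\widetilde\Omega|^2\le C\iint\mu(u_1,u_2)\,du_1\,du_2$ with $\mu(u_1,u_2)=|\{v:(u_1,v)\in\widetilde\Omega\text{ and }(u_2,v)\in\widetilde\Omega\}|$. For a fixed pair $(u_1,u_2)$ and any $v$ counted by $\mu(u_1,u_2)$ one has $F_1(u_i)=S(u_i,v)F_2(v)+O(\eps)$ for $i=1,2$; since $|F_1(u_1)|\ge1$ and $S$ is bounded away from $0$ and $\infty$, this forces $|F_2(v)|\asymp 1$ once $\eps$ is small, and eliminating $F_2(v)$ between the two relations yields $|R(u_1,u_2,v)-c(u_1,u_2)|\le C\eps$, where $R(u_1,u_2,v):=S(u_1,v)/S(u_2,v)$ is real analytic and nowhere zero and $c(u_1,u_2):=F_1(u_1)/F_1(u_2)$ does not depend on $v$. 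Hence $\mu(u_1,u_2)\le\sup_{c\in\reals}|\{v:|R(u_1,u_2,v)-c|\le C\eps\}|$, a quantity independent of $F_1,F_2$; so it suffices to prove that $\iint\sup_{c}|\{v:|R(u_1,u_2,v)-c|\le C\eps\}|\,du_1\,du_2\le C\eps^{\tau'}$ for some $\tau'>0$.

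This is where the hypothesis of Theorem~\ref{thm:trilinearsmoothing} is used. The set $Z=\{(u_1,u_2):\partial_vR(u_1,u_2,\cdot)\equiv 0\}$ is a real analytic subvariety of the $(u_1,u_2)$--domain, and it is proper: were $Z$ the whole domain, $R$ would be independent of $v$ everywhere, forcing a factorization $S(u,v)=\phi(u)\psi(v)$ with $\phi,\psi$ real analytic and nonvanishing; but then $\scriptl(\phi,1/\psi)\equiv 0$, and integrating once and passing to the flow of $V_3$ exactly as in the proof of Lemma~\ref{lemma:jets} produces $g_j\in C^\omega$ with $\sum_j(g_j\circ\varphi_j)\equiv 0$ and $g_1$ nonconstant, contradicting the hypothesis. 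Now fix a small $\beta>0$ and split the $(u_1,u_2)$--integral according to whether $\dist((u_1,u_2),Z)\le\eps^\beta$ or not. On the first region --- of measure $O(\eps^{\beta\kappa_0})$ by \L{}ojasiewicz --- the inner $v$--measure is $O(1)$. On the second, choose $N$ so large that $\sum_{1\le k\le N}|\partial_v^kR(u_1,u_2,v)|$ is nonzero for every $v$ whenever $(u_1,u_2)\notin Z$, which is legitimate because the orders of vanishing of $v\mapsto\partial_vR(u_1,u_2,v)$ are bounded off $Z$; then the subanalytic function $(u_1,u_2)\mapsto\min_v\sum_{1\le k\le N}|\partial_v^kR(u_1,u_2,v)|$ vanishes exactly on $Z$, so the \L{}ojasiewicz inequality for subanalytic functions gives $\min_v\sum_{1\le k\le N}|\partial_v^kR(u_1,u_2,v)|\gtrsim\dist((u_1,u_2),Z)^{\kappa_1}\ge\eps^{\beta\kappa_1}$. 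Applying Lemma~\ref{lemma:stronghypothesis} to $v\mapsto R(u_1,u_2,v)-c$ on a unit interval --- whose derivative of order $N+1$ is uniformly bounded, and whose quantity $\delta=\min_v\sum_{0\le k\le N}|\partial_v^k(R-c)|$ is at least $\min_v\sum_{1\le k\le N}|\partial_v^kR|$, hence $\gtrsim\eps^{\beta\kappa_1}$ --- bounds $\sup_c|\{v:|R(u_1,u_2,v)-c|\le C\eps\}|$ by $C\eps^{\tau_*-\beta\kappa_1\sigma_*}$, uniformly in the second region. Choosing $\beta$ small enough that $\tau_*-\beta\kappa_1\sigma_*>0$ gives $|\widetilde\Omega|^2\le C\eps^{\min(\beta\kappa_0,\,\tau_*-\beta\kappa_1\sigma_*)}$, which is the lemma with $\tau=\tfrac12\min(\beta\kappa_0,\tau_*-\beta\kappa_1\sigma_*)$. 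The step I expect to be the main obstacle is this last one --- turning the \emph{qualitative} non-factorization of $S$ into a \emph{uniform quantitative} sublevel estimate for the real analytic family $\{R(u_1,u_2,\cdot)\}$; the delicate points are the uniform boundedness of the orders of vanishing of $\partial_vR(u_1,u_2,\cdot)$ off $Z$ and the \L{}ojasiewicz-type lower bound for the jet minimum $\min_v\sum_{k\le N}|\partial_v^kR|$ by a power of the distance to $Z$ (the jet minimum being merely continuous, so one works in the subanalytic category).
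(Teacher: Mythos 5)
Your proposal is correct in substance, but it follows a genuinely different route from the paper's. You eliminate both unknown functions by a duplication trick: squaring the $v$-integral of the $u$-section lengths via Cauchy--Schwarz (this is Cauchy--Schwarz in $v$, not in $u$ as you say, though your displayed inequality is the right one) reduces the problem to the two-point quantity $\mu(u_1,u_2)$, and taking the ratio of the two relations removes $F_2(v)$ and converts $F_1$ into the irrelevant level $c=F_1(u_1)/F_1(u_2)$; the lemma then becomes a sublevel-set estimate, uniform in $c$, for the explicit analytic family $R(u_1,u_2,v)=S(u_1,v)/S(u_2,v)$, which you handle by a dichotomy around the variety $Z=\{(u_1,u_2):\partial_vR(u_1,u_2,\cdot)\equiv0\}$, with \L{}ojasiewicz on both sides of the split and Lemma~\ref{lemma:stronghypothesis} applied only in one dimension. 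The structural hypothesis of Theorem~\ref{thm:trilinearsmoothing} enters solely through the properness of $Z$ (non-factorization of $S=-h_2/h_1$), and your antiderivative/flow argument there is the same one used for Lemma~\ref{lemma:jets}. The paper argues differently: it bootstraps approximate analytic structure for $F_1,F_2$ on a large subset of $\sS$ (the ``rich point'' argument producing $G_j$ as slices of $H=h_2/h_1$), then applies the jet-injectivity Lemma~\ref{lemma:jets} followed by Lemma~\ref{lemma:stronghypothesis} in two variables. Your route avoids Lemma~\ref{lemma:jets} altogether and needs no upper bounds on $F_1,F_2$, so it proves the lemma exactly as stated for arbitrary measurable functions, whereas the paper's own proof quietly uses quantitative bounds available only in the application (e.g.\ $c_1=F_2(\barx_2)=O(\eps^{-C\rho})$ and $\norm{G_j}_{C^N}=O(\lambda^{C\rho})$). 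The price is the two points you flag, and both are fine in the subanalytic category: the uniform bound $N$ on the order of vanishing of $v\mapsto\partial_vR$ off $Z$ follows from Noetherian stabilization of the descending chain of analytic sets $\{(u_1,u_2,v):\partial_v^kR=0,\ 1\le k\le N\}$ together with compactness (which also shows the jet minimum vanishes exactly on $Z$), and the \L{}ojasiewicz inequality for the continuous subanalytic jet-minimum is standard --- tools of the same nature as those the paper already invokes without proof. Two small repairs: the intermediate claim $|F_2(v)|\asymp1$ is not available (you have no upper bound on $F_2$) but is also not needed, since $|R-c|\le C\eps$ follows directly from $|F_1(u_2)|\ge1$ and $0<c_0\le|S|\le C_0$ because the numerator of $c-R$ is $e_1S(u_2,v)-e_2S(u_1,v)$ with $|e_i|\le C\eps$; and you should run the argument on small coordinate rectangles $I\times J$ inside the $(u,v)$-image, so that $R$ is defined for all $(u_1,u_2,v)\in I\times I\times J$ and the non-factorization argument can be applied to the (open, nonempty) preimage of $I\times J$ --- harmless after a finite partition.
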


Redefine $F_1(y)$ whenever $|F_1(y)|<1$, so that $|F_1(y)|\ge 1$ for every $y$.
We aim to show that 
\begin{equation} \label{weaim} |S(F_1,F_2,\eps)| = O(\eps^\tau) \end{equation} 
for some constant $\tau>0$.  Set $\sS =  S(F_1,F_2,\eps)$.
Let $\tau_0$ be a parameter to be chosen below.
If $|\sS|\le \eps^{\tau_0}$ then the proof is complete.
We assume henceforth that $|\sS| > \eps^{\tau_0}$.

\subsection{Proof of Lemma~\ref{lemma:sublevel}}

After introducing a smooth finite partition of unity in the original integral \eqref{trilinearform}
and restricting attention to each summand individually, we may
change variables so that $x = (x_1,x_2)$ with $\varphi_j(x) = x_j$ for $j=1,2$.
Consider \[H(x) = H(x_1,x_2) = h_2(x)/h_1(x),\] 
which is a nonvanishing real analytic function in the nondegenerate case.
Write 
\begin{align*}
	H_{x_2}(x_1) &= H(x_1,x_2) 
	\\	H'_{x_2}(x_1) &= \frac{\partial H(x_1,x_2)}{\partial x_1}.
\end{align*}
Since any  $C^\omega$ solution $(F_1,F_2)$ of $h_1(x) F_1(x_1)
+h_2(x) F_2(x_2)=0$ in any nonempty connected open set must vanish identically,
$H$ cannot be a function of either $x_1$ alone, or of $x_2$ alone.
Thus $\partial H/\partial x_1$ does not vanish identically.
Therefore there exist $C,c\in\reals^+$
such that for any $\delta>0$,
$|\{x\in U: |\partial H(x)/\partial x_1|<\delta\}| \le C\delta^c$.
Consequently there exist $C, c$ and $\sS_0\subset\sS$
such that
% , $\barx_2$, and $\sS_0\subset\sS$ 
%such that $|\sS_0|\gtrsim |\sS|^C$ and
% and $H_{\barx_2}(y) = H(y,\barx_2)$ satisfies
\begin{equation} \label{H'bound} 
|\frac{\partial H}{\partial x_1} (x)| \ge c|\sS|^C \ge c\eps^{C\tau_0}
\ \text{ for every $x\in \sS_0$.}
\end{equation}

We claim that there exist $\tilde\sS\subset\sS_0$ satisfying 
$|\tilde\sS|\gtrsim |\sS_0|^C \gtrsim |\sS|^{C'}$
and $C^\infty$ functions $G_j$ with $\norm{G_j}_{C^N} = O_N(\lambda^{C\rho})$
for every $N\in\naturals$, such that
\[ (F_j-G_j)(y) = O(\eps) 
\ \text{ for every $y\in\varphi_j(\tilde\sS)$.} \]
% \text{ and } |G'_1(y)| \gtrsim \eps^{C\rho} \]
To prove this claim, we say that $x\in\sS_0$ is rich if the set of all $x'\in \sS_0$ satisfying
$\varphi_1(x')=\varphi_1(x)$ has one-dimensional measure $\gtrsim |\sS_0|$. 
The set $\sS_1$ of all rich points of $\sS_0$ has measure $\gtrsim |\sS_0|$.
There exists $\barx_2$ such that the set $S$ of all $x_1$ satisfying $(x_1,\barx_2)\in \sS_1$
has one-dimensional Lebesgue measure $\gtrsim |\sS_1|$.
For any $y\in S$, 
\begin{equation} \label{F1structure}
F_1(y) = G_1(y) + O(\eps)
\end{equation}
with
\begin{equation} G_1(y) = - c_1H(y,\barx_2) \end{equation}
where $c_1 = F_2(\barx_2)$.  Thus $c_1 = O(\eps^{-C\rho})$.
Since points in $\sS_1$ are all rich, $\sS_2 = \{x\in \sS_1:
\varphi_1(x)\in S\}$ satisfies
$|\sS_2| \gtrsim |\sS_1|^2\gtrsim |\sS|^2$, and \eqref{F1structure}
holds for every $y\in \varphi_1(\sS_2)$.
Thus
\begin{equation} \label{F1structure2}
F_1\circ\varphi_1(x) = G_1\circ \varphi_1(x) + O(\eps)
\qquad\forall\,x\in \sS_2.
\end{equation}

Repeating this reasoning for $F_2$, with $\sS$ replaced by the subset $\sS_2$, gives
$F_2\circ\varphi_2(x) = G_2\circ\varphi_2(x) + O(\eps)$
%- c_2h_1(\barx_1,\varphi_2(x))/h_2(\barx_1,\varphi_2(x)) + O(\eps)$
for all $x\in \sS_3$,
with $\sS_3\subset\sS_2\subset \sS$ satisfying $|\sS_3|\gtrsim |\sS|^4$,
and with $G_2$ defined in the same way that $G_1$ was defined, with the roles
of the indices $j=1,2$ reversed.
Thus $\tilde\sS = \sS_3$ and these functions $G_1,G_2$ have the properties claimed.
Moreover, by \eqref{H'bound}, the derivative of $G_1$ satisfies 
\begin{equation} \label{G'lowerbound} 
|G_1'(y)| \gtrsim \eps^{C\tau_0} \ \ \forall\,y\in \varphi_1(\tilde\sS).
\end{equation}

It suffices to show that
\[|\{x\in B: \scriptl(G_1,G_2)(x)= O(\eps)\}| = O(\eps^c) \ \text{ for some $c>0$.} \]
Define \[\scriptg(x) = \sum_{j=1}^2 h_j(x)\,(G_j\circ\varphi_j)(x) = \scriptl(G_1,G_2)(x).\]
We have $\norm{\scriptg}_{C^N} = O_N(\lambda^{C\rho})$ for every $N\ge 0$.
By its construction,
$c_2=F_1(y)$ for some $y$ in the domain of $F_1$, so $|c_2|\gtrsim 1$ by the
hypothesized lower bound for $|F_1|$.

Therefore according to Lemma~\ref{lemma:jets},
\begin{equation} 
	\eps^{C\tau_0} \lesssim
	|G'_1\circ\varphi_1(x)|
	\le \sum_{j=1}^2 \sum_{k=1}^M |G_j^{(k)}\circ\varphi_j(x)|
\le  C' \sum_{1\le |\alpha|\le M} 
\big| \frac{\partial^\alpha}{\partial x^\alpha} \scriptg(x)\big|\ \ \forall\,x\in \tilde\sS.
\end{equation}
This lower bound for $\scriptg$ allows us to apply
Lemma~\ref{lemma:stronghypothesis} to it, yielding 
\[ |\tilde\sS| \le C \eps^\tau (\eps^{C\tau_0})^{-\sigma} \eps^{C\rho B}\]
for certain exponents $\tau,\sigma,A\in\reals^+$.
If $\tau_0,\rho$ are chosen to be sufficiently small then this bound takes the form
$|\tilde\sS|= O(\eps^{\tau/2})$. 
Since $|\tilde\sS|\gtrsim |\sS|^C$,
$|\sS|= O(\eps^{\tau/2C})$, completing the proof of \eqref{weaim}.

%\repair
%The proof of \eqref{quantitativedegeneration}
%is for the most part merely a matter of tracing the dependence of the steps of the argument on $\delta$,
%but the final steps require some attention.

%For any $\barx\in U$, $\varphi_2(e^{tV_2}\barx)\equiv \varphi_2(\barx)$ and therefore
%\begin{equation}
%\big| h_1(e^{tV_2}\barx)(F_1\circ\varphi_1)(e^{tV_2}\barx) - h_2(e^{tV_2}\barx)(F_2\circ\varphi_2)(\barx)\big| = O(\eps)
%\end{equation} whenever $e^{tV_2}\barx\in \sS$. 

\subsection{The degenerate case} \label{subsect:degen}

Let $\lambda\in\reals^+$ be large.
Let $\delta>0$ be a small parameter that depends on $\lambda$, and is to be chosen below.
Let $\Sigma$, $c$, $\kappa_1$, $\kappa_2$, $Q_m$ be as in \S\ref{subsect:convergence}.
$Q_m$ are cubes of sidelengths $\delta^{2\kappa_1}$.

The restriction of $\Phi$ to each cube $Q_m$ defines a nondegenerate datum,
though this nondegeneracy is not uniform in $\lambda$.
The above analysis demonstrates that there exist an exponent $\tau\in(0,\infty)$
and a constant $C\in(0,\infty)$ that
depend on $\Phi$, but not on $\lambda,\delta$, such that
\begin{equation} \label{quantitativedegeneration}
\big| \int_{Q_m} \prod_j (f_j\circ\varphi_j)\,\eta\,dx\big|
\le C\delta^{-C\kappa_1} \lambda^{-\tau} \prod_j \norm{f_j}_{L^\infty}
\end{equation}
uniformly for all functions $f_j\in L^\infty$,
provided that $\lambda^{-\gamma}\lesssim \delta^{2\kappa_1}$,
that is, provided that $\delta\gtrsim \lambda^{-\gamma/2\kappa_1}$.

The proof of \eqref{quantitativedegeneration}
is simply a matter of tracing the dependence on $\delta$ of constants in the above analysis.
For instance, for any $i\ne j$, the mapping $x\mapsto (\varphi_i(x),\varphi_j(x))$
with domain $Q_m$ is uniformly smooth and invertible, with inverse that is $O(\delta^{-CN\kappa_1})$
in $C^N$ norm for any $N$.

%There is also the trivial bound
%$O(|Q_m| \prod_j\norm{f_j}_{L^\infty})$, uniformly in $\delta,\lambda$.
Assuming that $\norm{f_j}_{L^\infty}\le 1$  and 
that $\delta\gtrsim \lambda^{-\gamma/2\kappa_1}$, we obtain 
\begin{equation}
|\scriptt(\bff)| \lesssim \delta^{\kappa_2} 
+ \delta^{-4\kappa_1}\delta^{-C\kappa_1}\lambda^{-\tau}
\end{equation}
by majorizing the contribution of each of 
$O(\lambda^{-4\kappa_1})$ cubes $Q_m$ using \eqref{quantitativedegeneration}
and using the bound \eqref{smallnbdsmall} to control the contribution of 
the complement of their union.
The exponents $\tau,\kappa_1,\kappa_2$ and constant $C$ are independent of $\lambda,\delta$.
Choosing $\delta = \lambda^{-r}$ for a sufficiently small exponent $r$ produces the desired upper bound.
\qed


\begin{thebibliography}{20}

\bibitem {barrionuevo+etal} 
J.~A.~Barrionuevo, L.~Grafakos, D.~He, P.~Honz\'{\i}k, and L.~Oliveira,
{\em Bilinear spherical maximal function}, 
Math. Res. Lett. 25 (2018), no. 5, 1369--1388

\bibitem{christHardy}
M.~Christ,
{\em Weak type $(1,1)$ bounds for rough operators}, Ann. of Math. (2) 128 (1988), no. 1, 19--42

\bibitem{elementary!}
\bysame,
{\em On certain elementary trilinear operators}, Math. Res. Lett. 8 (2001), no. 1-2, 43--56

\bibitem{trilinear}
\bysame,
{\em On trilinear oscillatory integral inequalities and related topics},
preprint

\bibitem{quadrilinear}
\bysame,
{\em On implicitly oscillatory quadrilinear integrals}, in preparation

\bibitem{foursublevel}
\bysame,
{\em A three term sublevel set inequality}, in preparation

\bibitem{CDR} 
M.~Christ, P.~Durcik, and J.~Roos, 
{\em Trilinear smoothing inequalities and a variant of the triangular Hilbert transform},
Adv. Math. 390 (2021), Paper No.~107863

%\bibitem{guth+wang+zhang}
%L.~Guth, H.~Wang, and R.~Zhang,
%{\em A sharp square function estimate for the cone in $\reals^3$},
%Ann. of Math. (2) 192 (2020), no. 2, 551--581

\bibitem{dosidis_ramos}
G.~Dosidis and P.~G.~Ramos,
{\em The multilinear spherical maximal function in one dimension},
preprint,
arXiv:2204.00058v1


\bibitem {heo+hong+yang}
Y.~Heo, S.~Hong, and C.~W.~Yang,
{\em Improved bounds for the bilinear spherical maximal operators},
Math. Res. Lett. 27 (2020), no. 2, 397--434

\bibitem {jeong+lee} 
	%Jeong, Eunhee (KR-KIAS-SM); Lee, Sanghyuk (KR-SNU)
E.~Jeong and S.~Lee,
{\em Maximal estimates for the bilinear spherical averages and the bilinear Bochner-Riesz operators},
J. Funct. Anal. 279 (2020), no. 7, 108629

\bibitem{lacey}
M.~Lacey,
{\em The bilinear maximal functions map into $L^p$ for $2/3<p \le 1$},
Ann. of Math. (2) 151 (2000), no. 1, 35--57

\bibitem{kenig+stein}
C.~E.~Kenig and E.~M.~Stein,
{\em Multilinear estimates and fractional integration},
Math. Res. Lett. 6 (1999), no. 1, 1--15

%\bibitem{mockenhaupt+seeger+sogge}
%G.~Mockenhaupt, A.~Seeger, and C.~D.~Sogge,
%{\em Wave front sets, local smoothing and Bourgain's circular maximal theorem},
%Ann. of Math. (2) 136 (1992), no. 1, 207--218

\bibitem{STW}
A.~Seeger, T.~Tao, and J.~Wright,
{\em Pointwise convergence of lacunary spherical means}, 
Harmonic analysis at Mount Holyoke (South Hadley, MA, 2001), 341--351,
Contemp. Math., 320, Amer. Math. Soc., Providence, RI, 2003

\bibitem{stein+wainger_BAMS}
E.~M.~Stein and S.~Wainger,
{\em Problems in harmonic analysis related to curvature},
Bull. Amer. Math. Soc. 84 (1978), no. 6, 1239--1295

%\bibitem{trihigh} Trihigh.pdf, notes.

\end{thebibliography}
 \end{document}